\begin{document}
\title{Some combinatorial properties of Ultimate \(L\) and \(V\)}
\author{Gabriel Goldberg\\ Evans Hall\\ University Drive \\ Berkeley, CA 94720}
\maketitle
\begin{abstract}
    This paper establishes a number of constraints on the 
    structure of large cardinals under strong compactness assumptions. 
    These constraints coincide with those imposed by the Ultrapower
    Axiom \cite{UA}, a principle that is expected to hold in Woodin's hypothesized Ultimate \(L\),
    providing some evidence for the Ultimate \(L\) Conjecture \cite{Woodin}.

    We show that every regular cardinal above the first strongly compact that carries
    an indecomposable ultrafilter is measurable, 
    answering a question of Silver \cite{Silver} for large enough cardinals. 
    We show that any successor almost strongly compact
    cardinal of uncountable cofinality is strongly compact,
    making progress on a question
    of Boney, Unger, and Brooke-Taylor \cite{Boney}.
    We show that if there is a proper class of strongly
    compact cardinals then there is no nontrivial cardinal preserving elementary embedding
    from the universe of sets into an inner model,
    answering a question of Caicedo \cite{Caicedo} granting large cardinals. 
    Finally, we show that
    if \(\kappa\) is strongly compact, then \(V\) is a set forcing extension 
    of the inner model \(\kappa\text{-HOD}\) consisting of sets that are hereditarily ordinal definable from a
    \(\kappa\)-complete ultrafilter over an ordinal; \(\kappa\text{-HOD}\) 
    seems to be the first nontrivial
    example of a ground of \(V\) whose definition does not involve forcing.
\end{abstract}
\section{Introduction}
\subsection{The Ultimate \(L\) Conjecture}
Since Cohen's proof of the independence of the Continuum Hypothesis \cite{Cohen},
it has become clear that many of the fundamental features of the universe of sets
will never be decided on the basis of the currently accepted axioms of set theory.
Woodin's Ultimate \(L\) Conjecture
\cite{Woodin}, 
however, raises the possibility that the fundamental objects of set theory
can be transferred into a substructure of the universe (namely, Ultimate \(L\))
whose theory is as tractable as those of the conventional structures of mathematics.\footnote{
    {\it The axiom \(V = \text{Ultimate }L\)}:\ (1)\ There is a proper class of Woodin cardinals.\ 
    (2)\ If some level of the von Neumann hierarchy 
    satisfies a sentence \(\varphi\) in the language of set theory, 
    then there is a universally Baire set \(A\subseteq \mathbb R\) 
    such that some level of the von Neumann hierarchy of 
    \(\textnormal{HOD}^{L(A,\mathbb R)}\) satisfies \(\varphi\).

    {\it The Ultimate \(L\) Conjecture}:
If \(\kappa\) is extendible, then
there is an inner model \(M\) that satisfies ZFC plus the axiom \(V = \text{Ultimate }L\)
and has the property that for all cardinals \(\lambda\geq \kappa\),
there is a \(\kappa\)-complete normal fine ultrafilter \(\mathcal U\) over 
\(P_\kappa(\lambda)\) with \(P_\kappa(\lambda)\cap M\in \mathcal U\) and \(\mathcal U\cap M\in M\).
}
The fundamental objects in question
are {\it large cardinals},
strong closure points in Cantor's hierarchy of infinities
whose existence, taken axiomatically,
suffices to interpret and compare the vast array of 
mutually incompatible formal systems studied in contemporary set theory.

If Woodin's conjecture is true, the downward transference of large cardinal properties
from the universe of sets into Ultimate \(L\) would
necessitate an upward transference of 
combinatorial structure from Ultimate \(L\) back into the universe of sets. 
(For example, see \cite[Theorem 8.4.40]{UA}.)
As a consequence, the true believer should predict that the universe of sets
resembles Ultimate \(L\) in certain ways.
This paper presents a collection of theorems confirming this prediction
by showing that various consequences of the Ultrapower Axiom, 
a principle expected to hold in Ultimate \(L\), are actually provable 
from large cardinal axioms alone.

\subsection{The Ultrapower Axiom}
The Ultrapower Axiom (UA)
asserts that the category of wellfounded ultrapowers of the
universe of sets with internally definable ultrapower embeddings
is directed.\footnote{A category theorist would say {\it filtered}.}
In the author's thesis \cite[Theorem 2.3.10]{UA}, it is shown
that UA holds in any model whose
countable elementary substructures satisfy
a weak form of the {\it Comparison Lemma} of inner model theory.

The Comparison Lemma is really a
series of results (for example, \cite{Godel, KunenLU, MitchellSteel, NeemanSteel, FiniteLevels})
each roughly asserting the directedness of 
some subcategory of the category of (countable) canonical models of set theory
with iterated ultrapower embeddings. These canonical models are also known as ``mice.'' We
warn that the ``category of canonical models'' is not yet precisely defined;
so far, only certain subcategories of this category have been identified,
namely, those for which the Comparison Lemma has been proved.
The term ``iterated ultrapower'' is used in a similarly open-ended sense.

As it is currently conceived, 
the ongoing search for more powerful canonical models of set theory
(including Ultimate \(L\)) amounts to an
attempt to generalize the Comparison Lemma to larger subcategories of
the category of canonical models.
As a consequence, the current methodology of inner model theory
simply cannot produce a canonical model in which the Ultrapower 
Axiom fails. For this reason, it seems likely that if
Ultimate \(L\) exists, it will satisfy the Ultrapower Axiom.
\subsection{Consequences of UA from large cardinal axioms alone}
The Ultrapower Axiom can be used to develop a structure theory
in the context of very large cardinals, proving, for example,
that the Generalized Continuum Hypothesis holds
above the least strongly compact cardinal and
that the universe is a set generic extension of \(\textnormal{HOD}\).
One can also develop the theory of large cardinals,
obtaining equivalences between a number of large cardinal axioms
that are widely believed to have the same strength
(e.g., strong compactness and supercompactness).

All of these results are impossible to prove 
in ZFC alone, but it turns out that each has an analog
that is provable from large cardinal axioms.
For example, the analog of the UA theorem that the GCH
holds above a strongly compact cardinal is
Solovay's result that the Singular Cardinals Hypothesis holds above a strongly compact cardinal.
This paper establishes analogs of the other theorems 
using techniques that are quite different from those used under the Ultrapower Axiom.
The main methods of this paper actually derive from a lemma used by Woodin in
his analysis of the downward transference of large cardinal
axioms to Ultimate \(L\), namely, that
assuming large cardinal axioms, any ultrapower of the universe
absorbs all sufficiently complete ultrafilters (\cref{AmenableThm}). This
fact enables us to simulate the Ultrapower Axiom in
certain restricted situations.

We now summarize the results of this paper.
\subsection{Indecomposable ultrafilters and Silver's question}
Our first theorem, the subject of \cref{SilverSection}, 
concerns a question posed by Silver \cite{Silver} in the 1970s.
If \(\delta \leq \lambda\) are cardinals, \(X\) is a set, and \(U\) is an ultrafilter over
\(X\), \(U\) is said to be {\it \((\delta,\lambda)\)-indecomposable} if any
partition \(\langle A_\nu\rangle_{\nu < \alpha}\) of \(X\) with \(\alpha < \lambda\)
has a subsequence \(\langle A_{\nu_\xi}\rangle_{\xi < \beta}\) with \(\beta < \delta\)
such that \(\bigcup_{\xi < \beta} A_{\nu_\xi}\in U\).
Indecomposability refines the concept of \(\lambda\)-completeness: an
ultrafilter \(U\) over \(X\) is {\it \(\lambda\)-complete} if 
whenever \(\langle A_\nu\rangle_{\nu < \alpha}\) is a partition of \(X\) with \(\alpha < \lambda\),
there is some \(\nu < \alpha\) such that \(A_\nu\in U\), or in other words, if
\(U\) is \((2,\lambda)\)-indecomposable (equivalently, \((\omega,\lambda)\)-indecomposable). 

The precise relationship between indecomposability and completeness, however, is not at all clear. 
A uniform ultrafilter on a cardinal \(\lambda\) is said to be {\it indecomposable} if it
is \((\omega_1,\lambda)\)-indecomposable, the maximum degree of 
indecomposabiliy short of \(\lambda\)-com\-pleteness.
Silver asked whether an inaccessible cardinal \(\lambda\) that carries
an indecomposable ultrafilter is necessarily
measurable. The underlying idea is that such an ultrafilter is very close to being
\(\lambda\)-complete and hence witnessing the measurability of \(\lambda\).\footnote{
    We caution that assuming mild large cardinal axioms
    there {\it is} a uniform \((\omega_1,\lambda)\)-indecomposable ultrafilter over an
    inaccessible cardinal that is not itself \(\lambda\)-complete.
    Silver's question is really whether one can extract a \(\lambda\)-complete ultrafilter
    from any such ultrafilter.}

Jensen showed that in the canonical inner models, the answer to Silver's
question is yes. On the other hand, by forcing, Sheard \cite{Sheard} produced a
model in which the answer is no. Thus the question appears to be ``settled'' in
the usual way: no answer can be derived from the standard axioms. 

The Ultrapower Axiom does not help with Silver's question itself, but it does
answer the natural generalization of Silver's question to countably complete
ultrafilters: assuming UA, if \(\delta\) is a cardinal and \(\lambda > \delta\)
is inaccessible and carries a uniform countably complete
\((\delta,\lambda)\)-indecomposable ultrafilter, then \(\lambda\) is
measurable.

Despite Jensen and Sheard's independence results, we will show that for
sufficiently large cardinals \(\lambda\), the answer to Silver's question is
yes:
\begin{repthm}{IndecompChar0}
Suppose \(\delta < \kappa\leq \lambda\) are cardinals, \(\kappa\) is strongly
compact, and \(\lambda\) carries a uniform \((\delta, \lambda)\)-indecomposable
ultrafilter. Then \(\lambda\) either is a measurable cardinal or 
\(\lambda\) has cofinality less than \(\delta\) and \(\lambda\) is a limit of measurable cardinals.
\end{repthm}
As a consequence, if a cardinal \(\lambda\) above the least strongly compact
cardinal carries an indecomposable ultrafilter, then \(\lambda\) is either
a measurable cardinal or the limit of countably many measurable cardinals. 

\subsection{Almost strong compactness}
Our second result, proved in \cref{AlmostSection}, 
concerns a generalization of strong compactness defined by
Bagaria-Magidor \cite{MagidorBagaria}. A cardinal \(\kappa\) is 
{\it strongly compact} if every \(\kappa\)-complete filter extends to a \(\kappa\)-complete
ultrafilter. Many applications of strong compactness only seem to require that \(\kappa\) be
{\it almost strongly compact}: for any cardinal \(\nu < \kappa\), every \(\kappa\)-complete filter
extends to a \(\nu\)-complete ultrafilter.

The Ultrapower Axiom's most interesting consequences relate to the structure of strong compactness. 
Most notably, UA implies that the least strongly compact cardinal is supercompact.
In fact, UA also implies that the least {\it almost} strongly compact cardinal is supercompact;
in particular, the least almost strongly compact cardinal is strongly compact. 
Whether this is provable outright
is an open question, posed by Boney and Brooke-Taylor. We will obtain the
following partial answer: 
\begin{repthm}{AlmostSCDichotomy}[SCH] 
If the least almost strongly compact cardinal has uncountable cofinality, it is
strongly compact.
\end{repthm}
It is not true in general that every almost strongly compact cardinal is
strongly compact, since any limit of strongly compact cardinals is almost
strongly compact, while every strongly compact cardinal is regular. 
UA does imply that every {\it successor} almost strongly compact cardinal is strongly
compact. Here we will show that this is almost a theorem of ZFC: 
\begin{repthm}{SuccessorSC}
For any ordinal \(\alpha\), if the \((\alpha+1)\)-st almost strongly compact limit
cardinal has uncountable cofinality, it is strongly compact. 
\end{repthm}
We must say ``limit cardinal'' because the successor of any strongly compact cardinal
is almost strongly compact.
\subsection{Cardinal preserving embeddings}
Next, in \cref{CardinalPreservingSection}, 
we take up the question of cardinal preserving embeddings, 
posed by Caicedo \cite{Caicedo}. If \(M\) is an inner model, an
elementary embedding \(j : V\to M\) is said to be {\it cardinal preserving} (up
to \(\lambda\)) if every cardinal of \(M\) (less than \(\lambda\)) is a cardinal
in \(V\). 

Caicedo asked whether cardinal preserving embeddings exist. The
Ultrapower Axiom implies that they do not. In fact, under UA, if \(\lambda\) is
an aleph fixed point and \(j : V\to M\) is an elementary embedding that fixes
\(\lambda\) and is cardinal preserving up to \(\lambda\), then
\(V_\lambda\subseteq M\). Since every elementary embedding
has an \(\omega\)-closed unbounded class of fixed points, 
it follows that under UA, no elementary embedding
\(j : V\to M\) can be fully cardinal preserving: otherwise \(V_\lambda\subseteq M\)
for a proper class of \(\lambda\), violating the Kunen inconsistency \cite{Kunen},
which states that there is no 
elementary embedding from \(V\) to \(V\).

We show that one can refute the existence of cardinal preserving embeddings 
from large cardinal axioms alone:
\begin{repthm}{NoStronglyDiscont}
Suppose there is a proper class of strongly compact cardinals. Then there are no
cardinal preserving embeddings.
\end{repthm}
This theorem can be viewed as a version of the Kunen inconsistency,
but the proof is completely different from all of the usual proofs of Kunen's
theorem.

\subsection{Definability from ultrafilters}
Finally, \cref{HODSection} studies the structure of ordinal 
definability under large cardinal
assumptions. The most prominent
question here is Woodin's HOD Conjecture \cite{Woodin}. It turns out that UA implies the HOD
Conjecture,\footnote{Technically UA implies the HOD Hypothesis. The HOD Conjecture
is that the HOD Hypothesis is provable in ZFC.} and more:
\begin{thm*}[UA]
If there is a supercompact cardinal, then \(V\) is a generic extension of
\(\textnormal{HOD}\).\qed 
\end{thm*}
The proof appears in the author's thesis \cite[Theorem 6.2.8]{UA}.

It is impossible to prove that \(V\) is a generic extension of
\(\textnormal{HOD}\) from any of the standard large cardinal hypotheses. We will
instead consider a generalization of HOD.
\begin{defn*}
    Let \(\kappa\text{-OD}\) denote the class of sets definable from a
    \(\kappa\)-complete ultrafilter over an ordinal, and let \(\kappa\text{-HOD}\) denote the
    class of hereditarily \(\kappa\)-OD sets.\footnote{
        Note that \(x\) is \(\kappa\)-OD if and only if \(x\) is in
        \(\textnormal{OD}_U\) for some \(\kappa\)-complete ultrafilter \(U\) over an
        ordinal, so \(\kappa\text{-OD}\) and \(\kappa\text{-HOD}\) are first-order
        definable.}
\end{defn*}

An ultrafilter over a set \(X\) can be thought of as a generalized element of
\(X\). From this perspective,
an ultrafilter over an ordinal is a generalized ordinal.
For this reason, definability from an ultrafilter over an ordinal seems to be 
a natural extension of ordinal definability.

Arguably, the more complete an ultrafilter over an ordinal is, 
the more it should resemble an ordinal. Thus as \(\kappa\) increases,
\(\kappa\)-HOD should become more like \(\textnormal{HOD}\);
for example, the \(\Omega\)-complete ultrafilters (i.e., those
that are \(\kappa\)-complete for all cardinals \(\kappa\)) are just 
the principal ultrafilters over ordinals, 
which are essentially just ordinals.
Therefore \(\Omega\text{-HOD}\) is just the usual HOD.
On the other hand, \(\omega\)-HOD, the class of sets definable from an arbitrary
ultrafilter over an ordinal, turns out to be equal to \(V\) (\cref{OmegaThm}).
The remaining models 
\(\langle \kappa\text{-HOD}\rangle_{\kappa\in \text{Card}}\)
form a decreasing sequence of structures between \(V\) 
and \(\textnormal{HOD}\).

Standard arguments show that for any cardinal \(\kappa\), \(\kappa\)-HOD is an inner model
of ZF. A little bit more surprisingly, if \(\kappa\) is strongly compact, then \(\kappa\)-HOD satisfies the
Axiom of Choice. 

It is consistent all known large cardinal axioms that \(V \neq \kappa\text{-HOD}\)
for any uncountable cardinal \(\kappa\), since this holds after adding a Cohen real
(\cref{Cohen}). We will show, however, that \(V\) is almost equal to \(\kappa\)-HOD. 
If \(M\) is an inner model of ZFC, 
\(M\) is said to be a {\it ground} of \(V\) if there is a partial order
\(\mathbb P\in M\) and an \(M\)-generic filter \(G\) on \(\mathbb P\)
such that \(M[G] = V\).
\begin{repthm}{Ground}
Suppose \(\kappa\) is strongly compact. Then
\(\kappa\textnormal{-HOD}\) is a ground of \(V\).
\end{repthm}
It follows that for all sufficiently large cardinals \(\lambda\),
\((\lambda^+)^{\kappa\textnormal{-HOD}} = \lambda^+\), \((2^\lambda)^{\kappa\textnormal{-HOD}} = 2^\lambda\),
\(\kappa\textnormal{-HOD}\) correctly computes stationary subsets of \(\lambda\),
large cardinals are transferred into and out of \(\kappa\)-HOD, etc. 
In fact, this is true for all \(\lambda \geq (2^\kappa)^+\).
Therefore unlike HOD, \(\kappa\)-HOD is provably very similar to \(V\).
The model \(\kappa\textnormal{-HOD}\) is, as far as we know, the first
nontrivial example of a ground of \(V\) that is not defined in terms of set
theoretic geology.

Last of all, we prove the following theorem:
\begin{repthm}{CheapHOD}
Suppose \(\kappa\) is supercompact. Then \(\kappa\) is supercompact in
\(\kappa\textnormal{-HOD}\).
\end{repthm}
Since supercompactness is defined in terms of \(\kappa\)-complete normal fine
ultrafilters, which are necessarily \(\kappa\)-OD,
\cref{CheapHOD} may not seem very surprising.
The issue one must overcome, however, is that these ultrafilters
might not concentrate on \(\kappa\)-HOD
and therefore might not witness that \(\kappa\) is 
supercompact in \(\kappa\)-HOD.
This corresponding question for
strongly compact cardinals remains open.
\section{Preliminaries}
We put down some definitions and notational conventions, most of which are
completely standard.
\subsection{Ultrafilters}
\begin{defn}
    If \((\mathbb P,\leq)\) is a partial order, a proper subset \(F\subseteq \mathbb P\) 
    is a {\it filter} on \(\mathbb P\) if it is closed upwards under \(\leq\)
    and for any \(p,q\in F\), there is some \(r\in F\) with \(r\leq p\) and \(r\leq q\). 
    A filter \(U\) on \(\mathbb P\) is an {\it ultrafilter} on
    \(\mathbb P\) if it is \(\subseteq\)-maximal among all filters on \(\mathbb P\).
\end{defn}
We are really only interested in the following special case:
\begin{defn}
    Suppose \(M\) is a model of set theory and \(X\in M\). We say \(U\subseteq
    P^M(X)\) is an {\it \(M\)-filter} (resp.\ {\it \(M\)-ultrafilter}) over \(X\) if
    \(U\) is a filter (resp.\ ultrafilter) on the partial order \((P^M(X),\subseteq^M)\). 
\end{defn}
A fundamental concept in the theory of large cardinals is the completeness of an ultrafilter.
We will need the generalization of this concept to \(M\)-ultrafilters.
\begin{defn}
    Suppose \(M\) is a model of set theory, \(U\) is an \(M\)-ultrafilter,
    \(\rho\) is an \(M\)-cardinal, and \(\kappa\) is a cardinal. 
    \begin{itemize}
        \item \(U\) is {\it \(M\)-\(\rho\)-complete}
        if for any \(\sigma\subseteq U\) with \(\sigma\in M\) and \(|\sigma|^M <
        \rho\), \(\bigcap\sigma\in U\).
        \item  \(U\) is \(M\)-\(\kappa\)-complete if 
        for any \(\sigma\subseteq U\) with \(\sigma\in M\) and \(|\sigma| < \kappa\),
        \(\bigcap \sigma\in U\).\footnote{The notation \(|\sigma| < \kappa\) 
        cannot be taken literally when \(M\) is illfounded. 
        We really mean that the {\it extension} of \(\sigma\), i.e., the set
        \(\{x\in M : M\vDash x\in \sigma\}\), has cardinality less than \(\kappa\).
        Going forward, we will identify elements of illfounded models with their extensions
        without comment.}
    \end{itemize}
\end{defn}
If \(U\) is a \(V\)-ultrafilter over \(X\), we say that \(U\) is an {\it ultrafilter
over \(X\),} and if \(U\) is \(V\)-\(\kappa\)-complete, we say \(U\) is
{\it \(\kappa\)-complete.}

We denote the ultrapower of a model \(P\) by an \(P\)-ultrafilter \(U\) by
\[j_U : P\to M_U^P\] The ultrapower of \(V\) by an ultrafilter \(U\) is denoted
\(j_U : V\to M_U\).

The following terminology is probably self-explanatory:
\begin{defn}
    Suppose \(M\) is a model of set theory and \(W\) is an \(M\)-ultrafilter.
    Then \(W\) is an {\it ultrafilter of \(M\)} if \(W\in M\).
\end{defn}

We now turn to some basic combinatorial definitions.
\begin{defn}
    Suppose \(M\) is a model of set theory and \(X\in M\). An \(M\)-ultrafilter
    \(U\) over \(X\) is {\it uniform} if every set in \(U\) has \(M\)-cardinality \(|X|^M\).
\end{defn}
Note that if \(M\) is a wellfounded model
of ZFC, then for any \(M\)-ultrafilter \(U\), there is some \(A\in U\) such that
\(U\cap P^M(A)\) is uniform. In particular, this holds for any \(V\)-ultrafilter,
so in the theory of ultrafilters, one can usually work 
with uniform ultrafilters with no loss of generality.

A notion similar to uniformity, but distinct from it, is fineness:
\begin{defn}
    An ultrafilter \(U\) over a family of sets \(F\) is {\it fine}
    if for all \(x\in \bigcup F\), the set \(\{A\in F : x\in A\}\)
    belongs to \(U\).
\end{defn}
This is a slight generalization of the standard definition of fineness.
Note that an ultrafilter \(U\) over an ordinal \(\alpha\)
is fine if and only if every set in \(U\) is cofinal in \(\alpha\).

\begin{defn}\label{PushDef}
    Suppose \(f\) is a function, \(U\) is an ultrafilter over a set \(X\), and
    \(Y\) is a set such that \(f^{-1}[Y]\cap X\in U\). The {\it pushforward of
    \(U\) under \(f\) over \(Y\)} is the ultrafilter defined by \(f_*(U) =
    \{A\subseteq Y : f^{-1}[A]\cap X\in U\}\).
\end{defn}
Our notation for pushforwards ignores the choice of \(Y\), which we ask the
reader to infer from context. For notational convenience, we allow that
\(\text{dom}(f)\neq X\) and \(\text{ran}(f)\neq Y\), and instead require just
that \(f\) is defined \(U\)-almost everywhere and sends \(U\)-almost every
element of \(X\) to an element of \(Y\). This is not really an important point.

What {\it is} important is the relationship between pushforwards and
derived ultrafilters.
\begin{defn}
If \(j : M\to N\) is an elementary embedding, \(X\in M\), and \(a\in j(X)\), then the
{\it \(M\)-ultrafilter over \(X\) derived from \(j\) using \(a\)} is the set
\(\{A\in P^M(X) : a\in j(A)\}\).
\end{defn}

\begin{prp}\label{PushforwardLma}
Suppose \(U\) and \(W\) are ultrafilters over sets \(X\) and \(Y\)
and \(f\) is a function such that \(f^{-1}[Y]\cap X\in U\).
Then the following are equivalent:
\begin{enumerate}[(1)]
    \item \(f_*(U) = W\).
    \item \(W\) is the ultrafilter on \(Y\) derived from \(j_U\) using \([\textnormal{f}]_U\).
    \item  There exists an elementary embedding \(k : M_W\to M_U\) such that \(k\circ j_W = j_U\) 
        and \(k([\textnormal{id}]_W) = [f]_U\).\qed
\end{enumerate}
\end{prp}
Notice that there is at most one embedding witnessing (3).
\subsection{The approximation and cover properties}
For our results, it is important to define covering properties for models that
are not necessarily wellfounded.
\begin{defn}\label{CovDef} Suppose \(M\) is a model of set theory, \(X\in
M\) is a set, \(\rho\) is an \(M\)-cardinal, and \(\kappa\) is a cardinal.
\begin{itemize}
    \item \(M\) has the {\it \((\kappa,\rho)\)-cover property} if for all
    \(\sigma\subseteq M\) with \(|\sigma| < \kappa\), there is some \(\tau\in M\)
    with \(|\tau|^M < \rho\) such that \(\sigma\subseteq \tau\).
    \item {\it \(\kappa\)-cover property} if for all \(\sigma\subseteq M\) with \(|\sigma|
    < \kappa\), there is some \(\tau\in M\) with \(|\tau| < \kappa\) such that
    \(\sigma\subseteq \tau\).
\end{itemize}
\end{defn}
We will also discuss the Hamkins approximation property \cite{HamkinsApprox},
but we pass over the illfounded case:
\begin{defn}\label{AppxPrpDef} 
    Suppose \(M\) is a model of set theory and \(\kappa\) is a cardinal.
    \begin{itemize}
        \item A set \(A\subseteq M\) is {\it \(\kappa\)-approximated by \(M\)} if
              for all \(\sigma\in M\) with \(|\sigma| < \kappa\), \(A\cap \sigma\in
              M\).
        \item \(M\) has the {\it \(\kappa\)-approximation property} if every set
              that is \(\kappa\)-approx\-imated by \(M\) belongs to \(M\).
    \end{itemize}
\end{defn}
These two properties combined define the notion of a pseudoground:
\begin{defn}\label{Pseudoground}
Suppose \(M\subseteq N\) are transitive models of ZFC and \(\kappa\) is an \(N\)-cardinal. 
We say \(M\) is a {\it \(\kappa\)-pseudoground} of \(N\)
if \(N\) satisfies that \(M\) has the \(\kappa\)-approximation and cover properties.\footnote{Formally this is
expressed in the structure \((N,M)\).}
We say \(M\) is a {\it pseudoground} of \(N\) if there is some \(N\)-cardinal \(\kappa\) such that
\(M\) is a \(\kappa\)-pseudoground of \(N\).
\end{defn}
We will refer to pseudogrounds of \(V\) simply as pseudogrounds.

Note that if \(M\) is a pseudoground of \(N\) then \(\text{Ord}\cap N\subseteq M\),
or in other words, \(M\) is an inner model of \(N\). In particular, \(M\) is not
an element of \(N\),
but it turns out that \(M\) must be definable over \(N\): 
\begin{thm}[Laver-Hamkins]\label{HamkinsUniqueness}
    Suppose \(M\) is a \(\kappa\)-pseudoground of \(N\).
    Then \(M\) is the unique \(\kappa\)-pseudoground \(P\) of \(N\)
    such that \(P\cap H(\kappa^{+N}) = M\cap H(\kappa^{+N})\) and
    \(M\) is \(\Delta_2\)-definable over \(N\) from the
    parameter \(M\cap H(\kappa^+)\).\qed
\end{thm}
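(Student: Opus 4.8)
The plan is to split the statement into its two assertions: the \emph{uniqueness} of \(M\) among \(\kappa\)-pseudogrounds of \(N\) that agree with it on \(H(\kappa^{+N})\), and the \emph{\(\Delta_2\)-definability} of \(M\) over \(N\) from the parameter \(s = M\cap H(\kappa^{+N})\). Throughout I work inside \(N\), so ``\(H(\theta)\)'', ``cardinal'', and so on mean \(H(\theta)^N\), etc. The single engine behind both parts is this: a set of ordinals lying in a pseudoground is recoverable from its \(\kappa\)-sized approximations via the approximation property, while every \(\kappa\)-sized set of ordinals already lies in \(H(\kappa^{+})\) and is therefore decided by the parameter \(s\).

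I would do uniqueness first, since it is the clean part and uses only the approximation property. Let \(P\) be any \(\kappa\)-pseudoground of \(N\) with \(P\cap H(\kappa^{+N}) = s\). As both \(M\) and \(P\) are inner models of ZFC, and a transitive model of ZFC is determined by its sets of ordinals, it suffices to prove \(P(\lambda)\cap M = P(\lambda)\cap P\) for every ordinal \(\lambda\). By symmetry, fix \(A\in P(\lambda)\cap M\); I show \(A\in P\) by verifying that \(A\) is \(\kappa\)-approximated by \(P\) and appealing to the \(\kappa\)-approximation property of \(P\). Let \(y\in P\) with \(|y| < \kappa\); replacing \(y\) by \(y\cap\lambda\in P\), we may assume \(y\subseteq\lambda\). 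Then \(y\) is a set of ordinals of size \(<\kappa\), so \(y\in H(\kappa^{+N})\), whence \(y\in P\cap H(\kappa^{+N}) = s\subseteq M\). Since \(A,y\in M\), we get \(A\cap y\in M\), and \(|A\cap y|\leq |y| < \kappa\) gives \(A\cap y\in M\cap H(\kappa^{+N}) = s\subseteq P\). So every approximation \(A\cap y\) lies in \(P\), and the approximation property yields \(A\in P\). Note that the cover property is not used here at all.

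For definability the cover property finally does its work. The first step is the standard covering argument: the \(\kappa\)-cover property of \(M\) in \(N\) implies that \(M\) and \(N\) have the same cofinalities, and hence the same cardinals, at and above \(\kappa^{+}\), since any \(N\)-map witnessing a cofinality is covered by an \(M\)-set of size \(<\kappa\), which is then thinned inside \(M\). Consequently, for every \(N\)-cardinal \(\theta > \kappa\) we have \(M\cap H(\theta)^N = H(\theta)^M\), an honest transitive model of \(\mathsf{ZFC}^-\) of height \(\theta\) that inherits the \(\kappa\)-approximation and cover properties as a submodel of \(H(\theta)^N\). Call a pair \((\theta, W)\) a \emph{local ground} if \(\theta > \kappa\) is an \(N\)-cardinal, \(W\) is transitive, \(W\models \mathsf{ZFC}^-\) has height \(\theta\), \(W\cap H(\kappa^{+N}) = s\), and \((H(\theta)^N, W)\) satisfies that \(W\) has the \(\kappa\)-approximation and cover properties. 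I claim that for a set of ordinals \(A\), \(A\in M\) if and only if there is a local ground \((\theta, W)\) with \(A\in H(\theta)\) and \(A\in W\). Completeness is immediate by taking \(\theta\) large and \(W = M\cap H(\theta) = H(\theta)^M\). For soundness, given a local ground \((\theta, W)\), I run the uniqueness computation localized to \(H(\theta)^N\), comparing \(W\) with \(M\cap H(\theta)\): both are transitive \(\mathsf{ZFC}^-\) models of height \(\theta\), agree with \(s\) on \(H(\kappa^{+N})\), and satisfy the \(\kappa\)-approximation property in \(H(\theta)^N\), so the same approximation-and-parameter argument shows they have identical sets of ordinals below \(\theta\), hence \(W = M\cap H(\theta)\) and \(A\in M\). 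Finally, the defining condition is \(\Sigma_2\) (an existential over \(\theta\) and over \(W\in H(\theta^{+})\), followed by an assertion bounded in \(H(\theta)^N\)), and by the equivalence \(A\notin M\) is witnessed by a local ground \((\theta, W)\) with \(A\notin W\), which is again \(\Sigma_2\); so \(M\) is \(\Delta_2\)-definable over \(N\) from \(s\).

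The main obstacle is the definability, not the uniqueness. The delicate points are (i) extracting genuine cardinal and cofinality preservation above \(\kappa\) from the cover property, which is exactly what guarantees that the local pieces \(M\cap H(\theta)\) are true \(\mathsf{ZFC}^-\) models of the right height rather than mismatched fragments, and (ii) the soundness of the definition, namely ruling out an ``impostor'' \(W\) that satisfies the local approximation and cover conditions and agrees with \(s\) yet differs from \(M\cap H(\theta)\). Both are dispatched by localizing the uniqueness argument to \(H(\theta)^N\), so the conceptual heart of the entire theorem is the single approximation computation of the uniqueness step.
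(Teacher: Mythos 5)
First, a point of comparison: the paper does not actually prove this theorem. It is stated as a cited result of Laver and Hamkins (note the terminal QED box), so your proposal can only be measured against the known proofs (Laver, Woodin, Hamkins; see also Fuchs--Hamkins--Reitz, \emph{Set-theoretic geology}). Your high-level plan --- uniqueness by transferring sets of ordinals through \(\kappa\)-approximations, then \(\Delta_2\)-definability by localizing that computation to structures \(H(\theta)^N\) and quantifying over ``local grounds'' --- is indeed the shape of the standard argument. But the core computation is wrong, and the error occurs at exactly the point where the theorem's real difficulty lives.

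The false step is: ``\(y\) is a set of ordinals of size \(<\kappa\), so \(y\in H(\kappa^{+N})\).'' A set of ordinals of size \(<\kappa\) need not belong to \(H(\kappa^{+N})\): its elements can be ordinals \(\geq \kappa^{+N}\), in which case its transitive closure has cardinality at least \(\kappa^{+N}\) --- consider \(y=\{\kappa^{++}\}\). Among sets of ordinals, \(H(\kappa^{+N})\) contains only the bounded subsets of \(\kappa^{+N}\). Consequently neither of your two transfers is justified: \(y\in P\) with \(|y|<\kappa\) does not place \(y\) in \(s\subseteq M\), and \(A\cap y\in M\) with \(|A\cap y|<\kappa\) does not place \(A\cap y\) in \(s\subseteq P\). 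Since your definability argument explicitly ``runs the same uniqueness computation localized to \(H(\theta)^N\),'' the gap propagates to the entire proof. Nor can it be patched by taking transitive collapses: if \(\pi\colon y\to\mathrm{ot}(y)<\kappa\) is the collapsing map, then \(\pi\in P\) while \(A\in M\), so \(\pi[A\cap y]\) --- which \emph{would} be a subset of \(\kappa\) and hence decided by \(s\) --- is not known to lie in either model. This mismatch (the collapse lives in one model, the set in the other) is precisely what makes the theorem nontrivial; if your step were correct, Laver--Hamkins would be a triviality. The genuine proofs must manufacture \emph{common} covers: one builds chains of sets (or of elementary substructures of a large \(H(\theta)^N\)) alternately covered in the two models, shows the unions are captured by \emph{both} models using the approximation property on each side, and only then transfers small intersections through the shared \(H(\kappa^{+N})\) via order-type collapses. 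Relatedly, your claim that the cover property alone yields agreement of cofinalities and cardinals at and above \(\kappa^+\) is also incomplete: covering by sets of size \(<\kappa\) only rules out \(N\)-cofinalities below \(\kappa\) for \(M\)-regular cardinals, and excluding an \(M\)-regular \(\lambda\) of \(N\)-cofinality in the interval \([\kappa,\lambda)\) requires the approximation property as well.
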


The following is Woodin's Universality Theorem for pseudogrounds:
\begin{thm}[Woodin]
    Suppose \(M\) is a \(\kappa\)-pseudoground and \(E\) is an \(M\)-extender
    of length \(\nu\) whose critical point is at least \(\kappa\).
    If \(j_E(A)\cap [\nu]^{<\omega}\in M\) for all \(A\in M\), 
    then \(E\in M\).\qed
\end{thm}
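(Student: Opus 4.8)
The plan is to code the extender as a subset of \(M\) and show it is pinned down by the \(\kappa\)-approximation property. Write \(\kappa_0 = \text{crit}(E) \geq \kappa\), and let \(j_E \colon M \to N\) be the ultrapower embedding. For \(A \in P^M([\kappa_0]^{<\omega})\) put \(F_A = j_E(A) \cap [\nu]^{<\omega}\); by hypothesis \(F_A \in M\), and by the definition of the derived measures \(a \in j_E(A) \iff A \in E_a\) for \(a \in [\nu]^{<\omega}\). Thus \(E\) is interdefinable (over the ordinals \(\nu,\kappa_0 \in M\)) with the relation \(R = \{(a,A) : a \in [\nu]^{<\omega},\ A \in P^M([\kappa_0]^{<\omega}),\ a \in F_A\} \subseteq M\), so it suffices to prove \(R \in M\). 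Since \(M\) is a \(\kappa\)-pseudoground it has the \(\kappa\)-approximation property, so I only need to verify that \(R\) is \(\kappa\)-approximated by \(M\): for every \(\sigma \in M\) with \(|\sigma| < \kappa\), \(R \cap \sigma \in M\).

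Fix such a \(\sigma\), and let \(\mathcal{A}_0\) be its second projection, a subset of \(M\) of size \(< \kappa\). Using the \(\kappa\)-cover property, choose \(\mathcal{A} \in M\) with \(\mathcal{A}_0 \subseteq \mathcal{A}\) and \(|\mathcal{A}|^M < \kappa \leq \kappa_0\); this step is exactly where the cover property is needed, to guarantee \(M\)-smallness below the critical point. The crux is then to show that the whole slice-sequence \(\langle F_A : A \in \mathcal{A}\rangle\) lies in \(M\); naively each \(F_A \in M\), but a \(<\kappa\)-sequence of members of \(M\) need not belong to \(M\). The resolution is to code the entire family into a single set and apply the hypothesis \emph{once}. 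Fix a bijection \(e \colon \lambda \to \mathcal{A}\) in \(M\) with \(\lambda = |\mathcal{A}|^M < \kappa_0\), and define \(A^* = \{\{i\} \cup \hat{s} : i < \lambda,\ s \in e(i)\} \in M\), where \(\hat{s} = \{\lambda + \xi : \xi \in s\}\) shifts each element of \(s\) above \(\lambda\) (a bijection of \([\kappa_0]^{<\omega}\) in \(M\), since \(\kappa_0\) is a cardinal \(> \lambda\)). The point of the shift is that \(\min(\{i\}\cup\hat s) = i < \lambda\), so the index \(i\) is recoverable and the coding is lossless.

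Because \(\lambda < \kappa_0 = \text{crit}(E)\), the embedding \(j_E\) fixes \(\lambda\) and each \(i < \lambda\), and the shift commutes with \(j_E\) (as \(j_E(\lambda+\xi) = \lambda + j_E(\xi)\)); moreover \(j_E(e)(i) = j_E(e(i))\) for \(i < \lambda\) by elementarity. A direct computation therefore gives \(F_{A^*} = j_E(A^*) \cap [\nu]^{<\omega} = \{\{i\} \cup \hat{t} : i < \lambda,\ t \in F_{e(i)}\}\), and by hypothesis \(F_{A^*} \in M\). Since \(e \in M\) and the coding is invertible in \(M\), decoding yields \(\langle F_A : A \in \mathcal{A}\rangle \in M\). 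Finally, as \(\sigma \in M\) and the second projection of \(\sigma\) is contained in \(\mathcal{A}\), we have \(R \cap \sigma = \{(a,A) \in \sigma : a \in F_A\}\), which is definable in \(M\) from \(\sigma\) and the sequence \(\langle F_A : A \in \mathcal{A}\rangle\); hence \(R \cap \sigma \in M\). As \(\sigma\) was arbitrary, \(R\) is \(\kappa\)-approximated by \(M\), so \(R \in M\) and therefore \(E \in M\). The main obstacle is the coding step of the second and third paragraphs: one must check that a single set \(A^*\) in \(M\) simultaneously encodes all the slices \(F_A\) and that \(j_E(A^*)\cap[\nu]^{<\omega}\) decodes \emph{within \(M\)} back to the full sequence, which works precisely because the index set sits below \(\text{crit}(E) \geq \kappa\) and so is fixed by \(j_E\).
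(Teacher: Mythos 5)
Your proof is correct, and it is essentially the canonical argument: the paper itself states this result (Woodin's Universality Theorem for pseudogrounds) with a citation and no proof, so there is no internal proof to compare against; your amalgamation-and-decode argument is the standard way this theorem is proved. Two small remarks. First, under the paper's definition of the \(\kappa\)-approximation property, the test sets \(\sigma\) are required to lie in \(M\); hence the second projection \(\mathcal{A}_0\) of \(\sigma\) is automatically an element of \(M\), and it automatically satisfies \(|\mathcal{A}_0|^M < \kappa\) (a set in \(M\) of \(V\)-cardinality less than the \(V\)-cardinal \(\kappa\) has \(M\)-cardinality less than \(\kappa\)), so your appeal to the \(\kappa\)-cover property is redundant rather than, as you say, essential; it would become essential only under the variant definition of approximation that quantifies over arbitrary \(\sigma \subseteq M\) of size \(< \kappa\). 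Second, your equivalence \(\{i\} \cup \hat{t} \in [\nu]^{<\omega} \iff t \in [\nu]^{<\omega}\) uses that \(\lambda + \xi < \nu\) whenever \(\xi < \nu\), which holds because \(\lambda + \xi = \xi\) for \(\xi \geq \kappa_0\) and \(\lambda + \xi < \kappa_0\) for \(\xi < \kappa_0\); this silently assumes \(\nu \geq \kappa_0\). The case \(\nu < \kappa_0\) is degenerate (every \(a \in [\nu]^{<\omega}\) is fixed by \(j_E\), so each \(E_a\) is principal and \(E \in M\) trivially), but a complete write-up should dispose of it in a sentence. Also, the shift \(s \mapsto \hat{s}\) is an injection of \([\kappa_0]^{<\omega}\) into itself, not a bijection, though injectivity is all your decoding needs.
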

The Hamkins Universality Theorem shows that for nice embeddings, one does not
even have to assume closure under the extender:
\begin{thm}[Hamkins]\label{HamkinsUniversality}
    Suppose \(M\) is a \(\kappa\)-pseudoground. 
    \begin{itemize}
        \item Every \(\kappa\)-complete \(M\)-ultrafilter belongs to \(M\).
        \item If \(E\) is an extender with critical point greater than 
            \(\kappa\) such that \(M_E\) is closed under
            \(\kappa\)-sequences, then \(E\cap M\in M\). \qed
    \end{itemize}
\end{thm}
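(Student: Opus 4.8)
My plan is to handle the two items separately. The first (ultrafilters) I would prove directly from the approximation and cover properties together with \(\kappa\)-completeness. The second (extenders) I would reduce to Woodin's Universality Theorem for pseudogrounds (stated just above), after replacing \(E\) by the derived \(M\)-extender; the approximation property then reduces matters to a bounded computation that the \(\kappa\)-closure of \(M_E\) is designed to handle.

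\emph{The ultrafilter case.} Let \(U\) be a \(\kappa\)-complete \(M\)-ultrafilter over \(\lambda\in M\); since \(U\) is \(\kappa\)-complete, the ultrapower \(j_U : M\to \text{Ult}(M,U)\) has critical point at least \(\kappa\). By the \(\kappa\)-approximation property (\cref{AppxPrpDef}) it is enough to show that \(U\) is \(\kappa\)-approximated by \(M\), i.e.\ that \(U\cap\sigma\in M\) for every \(\sigma\in M\) with \(|\sigma|<\kappa\) and \(\sigma\subseteq P^M(\lambda)\). Using the cover property (\cref{CovDef}) I would enlarge \(\sigma\) to some \(\tau\in M\) with \(\tau\subseteq P^M(\lambda)\) and \(|\tau|^M<\kappa\), so that it suffices to compute \(U\cap\tau\). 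Working inside \(M\), form the Boolean algebra generated by \(\tau\) and its partition of \(\lambda\) into atoms \(\langle Y_i\rangle_{i<\theta}\); this partition lies in \(M\). If \(\theta<\kappa\), then \(\kappa\)-completeness of \(U\) selects a single atom \(Y_{i^\ast}\in U\), whence \(U\cap\tau=\{A\in\tau: Y_{i^\ast}\subseteq A\}\) is computable in \(M\) from \(Y_{i^\ast}\) and \(\tau\), giving \(U\cap\tau\in M\).

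\emph{The extender case.} Let \(F=E\cap M\) be the \(M\)-extender of length \(\nu=\textnormal{lh}(E)\) derived from \(j_E\restriction M\); since \(\textnormal{crit}(E)>\kappa\) we have \(\textnormal{crit}(F)\geq\kappa\), so by Woodin's theorem it suffices to prove \(j_F(A)\cap[\nu]^{<\omega}\in M\) for all \(A\in M\). Let \(k:\text{Ult}(M,F)\to M_E\) be the factor embedding, so \(k\circ j_F=j_E\restriction M\) and \(k\restriction\nu=\textnormal{id}\); then \(k\) fixes \([\nu]^{<\omega}\) pointwise, and since \(k(j_F(A))=j_E(A)\) we get \(j_F(A)\cap[\nu]^{<\omega}=j_E(A)\cap[\nu]^{<\omega}\). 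The task thus becomes showing \(j_E(A)\cap[\nu]^{<\omega}\in M\), and again I would apply the approximation property: for \(\sigma\in M\) with \(|\sigma|<\kappa\) and \(\sigma\subseteq[\nu]^{<\omega}\), show \(j_E(A)\cap\sigma\in M\). Here the \(\kappa\)-closure of \(M_E\) gives \(\sigma\in M_E\) and \(j_E(A)\in M_E\), so \(j_E(A)\cap\sigma\in M_E\); to descend this bounded piece into \(M\) I would use that \(j_E(M)\) is a \(\kappa\)-pseudoground of \(M_E\) (by elementarity, with \(j_E(\kappa)=\kappa\) since \(\textnormal{crit}(E)>\kappa\)) which, as \(j_E\) fixes \(H(\kappa^+)\) pointwise and \(H(\kappa^+)^{M_E}=H(\kappa^+)^V\) by \(\kappa\)-closure, agrees with \(M\) on \(H(\kappa^+)\); \cref{HamkinsUniqueness} then pins \(M\) down from this data.

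\emph{The main obstacle.} In the ultrafilter case the whole argument hinges on the atom partition having fewer than \(\kappa\) cells, so that \(\kappa\)-completeness can act; the cover property lets me assume \(|\tau|^M<\kappa\), and then \(\theta\leq 2^{|\tau|^M}<\kappa\) provided \(\kappa\) is a strong limit, which holds in all our applications, where \(\kappa\) is at least inaccessible. In the extender case the delicate step is precisely the descent of \(j_E(A)\cap\sigma\) from \(M_E\) back into \(M\): knowing the piece lies in \(M_E\) is not enough, and making the agreement of \(M\) with \(j_E(M)\) on \(H(\kappa^+)\) do real work on sets reaching above \(\kappa\) is where the cover and approximation properties must be combined most carefully. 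This is the step I expect to require the most effort.
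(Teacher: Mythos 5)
Your proof of the ultrafilter clause has a genuine gap: the Boolean algebra generated in \(M\) by \(\tau\) has, in general, \(2^{|\tau|}\) atoms, so the step where \(\kappa\)-completeness ``selects a single atom'' requires \(2^{|\tau|}<\kappa\), i.e.\ that \(\kappa\) is a strong limit. You acknowledge this, but the escape clause is not available: the theorem is stated for an arbitrary cardinal \(\kappa\), and the paper genuinely uses it at small \(\kappa\) --- in the proof of \cref{GroundEqui}, (4) implies (1), the Universality Theorem is invoked for an \(\omega_1^N\)-pseudoground, and \(\omega_1\) is not a strong limit. (The paper's proof of \cref{AppxEquiv} even flags this distinction: only the strengthening to \emph{\(M\)-\(\kappa\)-complete} \(M\)-ultrafilters needs \(\kappa\) to be a strong limit; Hamkins' theorem itself does not.) The repair, which is exactly the paper's argument in the (1)-implies-(2) direction of \cref{AppxEquiv}, is to avoid atoms altogether: close \(\sigma\) under relative complements in \(X\) (this keeps \(\sigma\in M\) and \(|\sigma|<\kappa\)), pick any \(x\in\bigcap(U\cap\sigma)\), which is nonempty by completeness, and check that \(U\cap\sigma=\{A\in\sigma : x\in A\}\) --- for if \(A\in\sigma\), \(x\in A\), but \(A\notin U\), then \(X\setminus A\in U\cap\sigma\), so \(x\in X\setminus A\), a contradiction. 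A single point does the work of your atom, no exponentiation occurs, and the cover property is never needed for this clause; the same argument gives the \(M\)-ultrafilter statement verbatim when \(\kappa\)-completeness is read as ``every subfamily of size \(<\kappa\) has nonempty intersection.''

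For the extender clause, your reduction is fine as far as it goes --- \(F=E\cap M\) has critical point above \(\kappa\) and \(j_F(A)\cap[\nu]^{<\omega}=j_E(A)\cap[\nu]^{<\omega}\) via the factor embedding --- but the step you yourself identify as the crux, descending \(j_E(A)\cap\sigma\) from \(M_E\) into \(M\), is not proved, and the mechanism you propose cannot work as stated. \cref{HamkinsUniqueness} compares \(\kappa\)-pseudogrounds \emph{of one fixed model} that agree on \(H(\kappa^+)\); here \(j_E(M)\) is a \(\kappa\)-pseudoground of \(M_E\), whereas \(M\) is a \(\kappa\)-pseudoground of \(V\) and need not even be a subclass of \(M_E\), so there is no ambient model over which the uniqueness theorem can see both. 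Nor can one route through \(M_E\) itself: a \(\kappa\)-closed ultrapower fails the \(\kappa\)-approximation property in \(V\) (this is precisely the corollary following \cref{SCWEM}), so ``pseudoground of \(M_E\)'' does not compose with the inclusion \(M_E\subseteq V\) to yield anything about pseudogrounds of \(V\). Knowing \(j_E(A)\cap\sigma\in M_E\) therefore remains the full distance from knowing it lies in \(M\); that descent is the real content of Hamkins' theorem, which the paper cites without proof, and nothing in your outline closes it.
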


\begin{thm}[Hamkins-Reitz]
Suppose \(\kappa\) is a cardinal and \(M\) is a \(\kappa\)-pseudoground. Then \(M\) is
a \(\lambda\)-pseudoground for all \(\lambda\geq \kappa\).\qed
\end{thm}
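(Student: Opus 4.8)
The plan is to verify the two halves of \cref{Pseudoground} separately: that \(M\) has the \(\lambda\)-approximation property of \cref{AppxPrpDef} and the \(\lambda\)-cover property of \cref{CovDef}. The approximation property is immediate and requires no new idea. Indeed, if \(A\subseteq M\) is \(\lambda\)-approximated by \(M\), then since every \(\sigma\in M\) with \(|\sigma| < \kappa\) also satisfies \(|\sigma| < \lambda\), the set \(A\) is in particular \(\kappa\)-approximated by \(M\), and the \(\kappa\)-approximation property yields \(A\in M\). Thus being \(\lambda\)-approximated is a \emph{stronger} hypothesis than being \(\kappa\)-approximated, so the property propagates upward for free.

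The content is therefore entirely in the cover property. Reducing to the case \(\sigma\subseteq\mathrm{Ord}\) (by replacing elements of \(M\) with ordinals coding them), I would prove by induction on the cardinal \(\mu = |\sigma|\) in the interval \([\kappa,\lambda)\) that every \(\sigma\subseteq\mathrm{Ord}\) with \(|\sigma|\leq\mu\) is contained in some \(\tau\in M\) with \(|\tau|\leq\mu < \lambda\), the instances \(\mu < \kappa\) being exactly the \(\kappa\)-cover property and serving as the base of the induction. The key idea handles all \(\mu\) with \(\mathrm{cf}(\mu)\geq\kappa\), in particular every regular \(\mu\). Resolve \(\sigma\) as an increasing continuous union \(\sigma = \bigcup_{\xi < \mathrm{cf}(\mu)}\sigma_\xi\) with \(|\sigma_\xi| < \mu\) and, working recursively in \(V\), use the induction hypothesis to choose an \(\subseteq\)-increasing sequence \(\langle \tau_\xi : \xi < \mathrm{cf}(\mu)\rangle\) of covers \(\tau_\xi\in M\) with \(\sigma_\xi\subseteq\tau_\xi\) and \(|\tau_\xi| < \mu\). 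The sequence itself need not lie in \(M\), but its union \(T = \bigcup_\xi \tau_\xi\) does: given any \(s\in M\) with \(|s| < \kappa\), the set \(T\cap s\) has size \(< \kappa \leq \mathrm{cf}(\mu)\), so by regularity of \(\mathrm{cf}(\mu)\) it is already contained in a single \(\tau_{\xi^*}\), whence \(T\cap s = \tau_{\xi^*}\cap s\in M\). Thus \(T\) is \(\kappa\)-approximated by \(M\), and the \(\kappa\)-approximation property delivers \(T\in M\), a cover of \(\sigma\) of the required size.

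The hard part will be the complementary case \(\mathrm{cf}(\mu) = \delta < \kappa\), where the stabilization argument above fails: fewer than \(\kappa\) indices can be cofinal in \(\mu\), so \(T\cap s\) need not be captured by a single \(\tau_\xi\). Here I would instead cover the \emph{short} family \(\{\tau_\xi : \xi < \delta\}\), a subset of \(M\) of size \(\delta < \kappa\), by a single set \(A\in M\) using the \(\kappa\)-cover property, and then form the union \emph{inside} \(M\). The obstacle is that \(A\) may contain extraneous elements, so \(\bigcup A\) can be far too large; taming it forces one to control cardinalities \emph{as computed in \(M\)}, and hence to strengthen the inductive hypothesis so that the covers \(\tau_\xi\) have small \(M\)-cardinality and the internally defined set \(\bigcup\{x\in A : x\subseteq\mathrm{Ord},\ |x|^M\leq\mu\}\) is provably of \(M\)-cardinality \(\leq\mu\). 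Reconciling this \(M\)-cardinality bookkeeping with the approximation step above—where the union \(T\) is produced with a bound only on its ambient cardinality—is the main technical point, and it is exactly here that the full hypothesis that \(M\) is a \(\kappa\)-pseudoground (enjoying \emph{both} the cover and the approximation property, rather than either one alone) is used essentially.
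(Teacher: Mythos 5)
Your overall strategy is the standard Hamkins--Reitz argument; note that the paper itself states this theorem with a citation and no proof, so there is nothing in-paper to compare against. Your treatment of the \(\lambda\)-approximation property is correct, and your case \(\mathrm{cf}(\mu)\geq\kappa\) of the cover induction is complete and correct. The gap is the case \(\mathrm{cf}(\mu)=\delta<\kappa\), which you leave as a plan and flag as the ``main technical point.'' Worse, the fix you propose -- strengthening the inductive hypothesis so that covers have small \emph{\(M\)-cardinality} -- would undermine your own first case: the set \(T\) produced by the approximation argument is only known to have ambient cardinality \(\leq\mu\), and \(|T|^M\) may well exceed \(\mu\) (it can be any \(M\)-cardinal of ambient cardinality \(\mu\), e.g.\ \(\mu^{+M}\) when \(\mu^{+M}<\mu^+\)). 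So the strengthened hypothesis is not provable by your Case 1 argument, and as written the induction does not close.

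The resolution is that no strengthening is needed, because of a one-line observation you missed: if \(\mu\) is a cardinal of \(V\) and \(\tau\in M\) has ambient cardinality \(|\tau|<\mu\), then automatically \(|\tau|^M<\mu\); indeed \(|\tau|^M\) is an ordinal of ambient cardinality \(|\tau|<\mu\), and every ordinal of ambient cardinality less than the cardinal \(\mu\) is itself less than \(\mu\). So run the induction with the unstrengthened statement (every \(\sigma\subseteq M\) with \(|\sigma|\leq\mu\) has a cover \(\tau\in M\) of ambient cardinality \(\leq\mu\)). In the case \(\mathrm{cf}(\mu)=\delta<\kappa\), the covers \(\tau_\xi\in M\) of the pieces \(\sigma_\xi\) have ambient cardinality \(<\mu\), hence \(|\tau_\xi|^M<\mu\). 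Cover the family \(\{\tau_\xi:\xi<\delta\}\) by \(\mathcal B\in M\) with \(|\mathcal B|<\kappa\) using the \(\kappa\)-cover property, and set, inside \(M\), \(T=\bigcup\{S\in\mathcal B: |S|^M<\mu\}\). Then \(T\in M\), each \(\tau_\xi\) survives the filter so \(\sigma\subseteq T\), and \(T\) is a union of fewer than \(\kappa\) sets each of ambient cardinality less than \(\mu\), so \(|T|\leq\mu<\lambda\). Since the cover property of \cref{CovDef} demands only an ambient cardinality bound on the cover, this closes the induction: no \(M\)-cardinality bookkeeping needs to appear in the statement being proved, and the two halves of the pseudoground hypothesis are used exactly where you used them (approximation when \(\mathrm{cf}(\mu)\geq\kappa\), cover when \(\mathrm{cf}(\mu)<\kappa\)).
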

\subsection{Compactness principles}\label{CompactnessSection}
In this section, we define various notions of strong compactness, the
most famous of which is of course due to Tarski \cite{Tarski}, and the rest of which
were introduced by Bagaria-Magidor \cite{MagidorBagaria}.
\begin{defn}
Suppose \(\delta\leq \kappa \leq\lambda\) are cardinals. Then \(\kappa\) is
\emph{\((\delta,\lambda)\)-strongly compact} if there is a \(\delta\)-complete
fine ultrafilter over \(P_\kappa(\lambda)\).
\end{defn}
This principle is degenerate in the sense that if \(\kappa\) is
\((\delta,\lambda)\)-strongly compact, then all ordinals above \(\kappa\) are
\((\delta,\lambda)\)-strongly compact.
\begin{defn}
Suppose \(\delta\leq \kappa\leq \lambda\) are cardinals. 
\begin{itemize}
    \item \(\kappa\) is \emph{\((\delta,\infty)\)-strongly compact} if it is
    \((\delta,\gamma)\)-strongly compact for all cardinals \(\gamma\geq
    \kappa\).
    \item  \(\kappa\) is \emph{\(\lambda\)-strongly compact} if it is
    \((\kappa,\lambda)\)-strongly compact.
    \item \(\kappa\) is \emph{strongly compact} if it is
    \((\kappa,\infty)\)-strongly compact.
    \item \(\kappa\) is \emph{almost \(\lambda\)-strongly compact} if it is
    \((\gamma,\lambda)\)-strongly compact for all cardinals \(\gamma < \kappa\).
    \item \(\kappa\) is \emph{almost strongly compact} if it is almost
    \(\eta\)-strongly compact for all cardinals \(\eta\geq \kappa\).
\end{itemize}
\end{defn}
These principles can be reformulated in terms of either
the filter extension property, elementary embeddings, or uniform ultrafilters on cardinals.
We will actually use all four characterizations below without much comment.
\begin{thm}[Solovay, Ketonen]\label{KetonenThm} Suppose \(\delta\leq \kappa\leq
    \lambda\) are cardinals. Then the following are equivalent:
    \begin{itemize}
        \item \(\kappa\) is \((\delta,\lambda)\)-strongly compact.
        \item There is an elementary embedding \(j : V\to M\) with
              \(\textnormal{crit}(j) \geq \delta\) such that \(M\) has the
              \((\lambda^+,j(\kappa))\)-cover property.
        \item Every \(\kappa\)-complete filter that is generated by at most
              \(\lambda\) sets extends to a \(\delta\)-complete ultrafilter.
        \item Every regular cardinal in the interval \([\kappa,\lambda]\)
              carries a \(\delta\)-complete uniform ultrafilter.\qed
    \end{itemize}
\end{thm}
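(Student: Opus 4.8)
The plan is to prove the cycle of implications \((1)\Rightarrow(2)\Rightarrow(3)\Rightarrow(4)\Rightarrow(1)\), numbering the four clauses in the order displayed. The first three implications amount to bookkeeping around the ultrapower construction and the filter extension property, due to Solovay; the real content is the last implication, Ketonen's theorem, which I expect to be the main obstacle.

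For \((1)\Rightarrow(2)\), let \(\mathcal U\) be a \(\delta\)-complete fine ultrafilter over \(P_\kappa(\lambda)\) and take \(j=j_{\mathcal U}:V\to M\). Since \(\mathcal U\) is \(\delta\)-complete, \(\textnormal{crit}(j)\geq\delta\), and writing \(s=[\textnormal{id}]_{\mathcal U}\), fineness gives \(j(\alpha)\in s\) for every \(\alpha<\lambda\), that is \(j[\lambda]\subseteq s\), while \(s\in j(P_\kappa(\lambda))\) forces \(|s|^M<j(\kappa)\). To verify the \((\lambda^+,j(\kappa))\)-cover property I would take an arbitrary \(\{[f_\xi]_{\mathcal U}:\xi<\lambda\}\subseteq M\) and set \(F(x)=\{f_\xi(x):\xi\in x\}\) for \(x\in P_\kappa(\lambda)\); then \(|F(x)|<\kappa\) everywhere, so \(\tau=[F]_{\mathcal U}\) has \(|\tau|^M<j(\kappa)\), and since \(\{x:\xi\in x\}\in\mathcal U\) by fineness one checks that \([f_\xi]_{\mathcal U}\in\tau\) for each \(\xi\). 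Thus \(\tau\) covers the given set.

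For \((2)\Rightarrow(3)\), let \(F\) be a \(\kappa\)-complete filter on a set \(X\) generated by \(\langle A_\xi\rangle_{\xi<\lambda}\). Applying the cover property to \(\{j(A_\xi):\xi<\lambda\}\subseteq M\) yields \(\tau\in M\) with \(|\tau|^M<j(\kappa)\) containing it, and intersecting \(\tau\) with the proper \(j(\kappa)\)-complete filter \(j(F)\in M\) produces, by \(j(\kappa)\)-completeness, a nonempty set each of whose points derives in the standard way a \(\delta\)-complete ultrafilter over \(X\); that this ultrafilter extends \(F\) is the Solovay filter-extension argument, and \(\delta\)-completeness comes from \(\textnormal{crit}(j)\geq\delta\) letting \(j\) commute with intersections of length \(<\delta\). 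For \((3)\Rightarrow(4)\), apply the filter extension property to the cobounded filter on a regular \(\rho\in[\kappa,\lambda]\): it is \(\rho\)-complete, hence \(\kappa\)-complete, and generated by the \(\rho\leq\lambda\) tails, so it extends to a \(\delta\)-complete ultrafilter, which is uniform because a bounded subset of a regular cardinal has cobounded complement.

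The hard direction is \((4)\Rightarrow(1)\). Reversing the computation in \((1)\Rightarrow(2)\), it suffices to build an embedding \(j:V\to M\) with \(\textnormal{crit}(j)\geq\delta\) and a single set \(s\subseteq j(\lambda)\) with \(j[\lambda]\subseteq s\) and \(|s|^M<j(\kappa)\), for then the ultrafilter over \(P_\kappa(\lambda)\) derived from \(s\) is fine and \(\delta\)-complete. I would construct \(j\) by recursion on \(\lambda\), composing ultrapowers. The basic move is that the uniform \(\delta\)-complete ultrafilter \(U_\rho\) on a regular \(\rho\in[\kappa,\lambda]\) produces an ordinal \(\gamma=[\textnormal{id}]_{U_\rho}\) with \(j_{U_\rho}[\rho]\subseteq\gamma<j_{U_\rho}(\rho)\), so that \(\gamma\) covers \(j_{U_\rho}[\rho]\) by a set of target cardinality \(<j_{U_\rho}(\rho)\). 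This bound is too weak once \(\rho>\kappa\), and the essential idea is to then apply the recursion hypothesis \emph{inside} the ultrapower: by elementarity the target model believes its image of \(\kappa\) is \((\delta,\mu)\)-strongly compact for the relevant \(\mu<j_{U_\rho}(\rho)\), and the corresponding fine ultrafilter there recompresses \(\gamma\) into a set of size less than the image of \(\kappa\); composing the two ultrapowers then covers \(j[\rho]\) by a set of the required size. The main obstacle is the bookkeeping of this transfinite iteration: assembling the single-cardinal steps over all regular \(\rho\in[\kappa,\lambda]\) through direct limits while keeping every critical point \(\geq\delta\), handling singular and limit stages, and verifying that the final covering set genuinely has \(M\)-cardinality \(<j(\kappa)\) rather than merely \(<j(\lambda)\). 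This nested covering construction is the technical heart of Ketonen's theorem.
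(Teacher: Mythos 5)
You should first note that the paper itself gives no proof of this theorem: it is stated with a \qed{} as a known result of Solovay and Ketonen (the relevant sources are Ketonen's paper and Bagaria--Magidor for the \(\delta<\kappa\) generality), so your proposal must stand on its own. Your three implications \((1)\Rightarrow(2)\Rightarrow(3)\Rightarrow(4)\) are correct and essentially complete: the covering set \([F]_{\mathcal U}\) with \(F(x)=\{f_\xi(x):\xi\in x\}\), the ultrafilter derived from a point of \(\bigcap\bigl(\tau\cap j(F)\bigr)\), and the extension of the cobounded filter on a regular \(\rho\in[\kappa,\lambda]\) are exactly the standard arguments. Your regular-cardinal step for \((4)\Rightarrow(1)\) is also the correct core of Ketonen's proof: \(j[\rho]\subseteq\gamma:=[\textnormal{id}]_{U_\rho}<j(\rho)\), and the elementarity image of the induction hypothesis inside \(M_{U_\rho}\) recompresses \(\gamma\) to \(M\)-cardinality below the image of \(\kappa\); composing the two ultrapowers and deriving from the resulting cover gives a fine \(\delta\)-complete ultrafilter over \(P_\kappa(\rho)\).

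The gap is that the ``singular and limit stages'' you defer are not bookkeeping; they carry the remaining content, and the mechanism you propose for them fails. First, singular stages cannot be skipped: at a successor \(\rho=\mu^+\), the ordinal \(\gamma=[\textnormal{id}]_{U_\rho}\) satisfies \(j(\mu)<\gamma<(j(\mu)^{+})^{M}\), so \(|\gamma|^{M}=j(\mu)\) exactly, and your recompression step needs \(M\) to believe \(j(\kappa)\) is \((j(\delta),j(\mu))\)-strongly compact --- the image of the induction hypothesis at \(\mu\), which may be singular. Second, for singular \(\mu\) of cofinality \(\theta\), the plan of iterating \(\theta\) ultrapowers, covering the image of \(\mu_i\) by \(x_i\) at stage \(i\), and taking the union of the sets \(t_i=j_{i+1,\theta}(x_i)\) in the direct limit \(M_\theta\) breaks down: the sequence \(\langle t_i : i<\theta\rangle\) is external to \(M_\theta\), so \(\bigcup_{i<\theta}t_i\) need not belong to \(M_\theta\), nor be covered by any set in \(M_\theta\) of \(M_\theta\)-cardinality less than \(j(\kappa)\). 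This is the Prikry phenomenon: for the \(\omega\)-iteration of a normal measure on \(\kappa\), the critical sequence \(\langle\kappa_n\rangle_{n<\omega}\) is cofinal in \(j_{0,\omega}(\kappa)\), which is regular in \(M_\omega\), so no set of \(M_\omega\)-cardinality less than \(j_{0,\omega}(\kappa)\) covers it. The classical proofs use a genuinely different device at singular stages (Ketonen's \((\kappa,\mu)\)-regular ultrafilters and sums of ultrafilters, or arguments with decomposability spectra in the \(\delta<\kappa\) setting, in the spirit of \cref{KetonenSpectrum}), and a complete write-up must supply one. Finally, your base case uses a uniform \(\delta\)-complete ultrafilter on \(\kappa\) itself, which clause (4) provides only when \(\kappa\) is regular; the statement as printed allows singular \(\kappa\), and that case requires separate treatment (indeed, for \(\kappa=\lambda\) singular, clause (4) is vacuous while clause (1) implies the existence of a measurable cardinal, so the theorem is implicitly read with \(\kappa\) regular, or with \(\lambda>\kappa\) and additional argument).
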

We also use the following theorem, which is essentially due to Solovay:
\begin{thm}[Solovay]\label{AlmostSolovay}
    The Singular Cardinals Hypothesis
    holds above the least almost strongly compact cardinal \(\kappa\): 
    if \(\lambda \geq \kappa\) is a singular cardinal, then 
    \[\lambda^{\textnormal{cf}(\lambda)} = \textnormal{max}(2^{\textnormal{cf}(\lambda)},\lambda^+)\pushQED \qed\qedhere\popQED\]
\end{thm}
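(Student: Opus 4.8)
The plan is to reduce the Singular Cardinals Hypothesis above \(\kappa\) to a single combinatorial lemma about complete uniform ultrafilters on regular cardinals, the existence of which is exactly what almost strong compactness provides through the last clause of \cref{KetonenThm}. Fix a singular \(\lambda \geq \kappa\) and put \(\mu = \mathrm{cf}(\lambda)\); I want \(\lambda^\mu = \max(2^\mu,\lambda^+)\), and since \(\lambda^\mu \geq \max(2^\mu,\lambda^+)\) always holds, only the upper bound is at issue. The case \(2^\mu \geq \lambda\) is trivial, as then \(\lambda^\mu \leq (2^\mu)^\mu = 2^\mu\); and the case \(\mu \geq \kappa\) I will dispatch at the end using Silver's theorem. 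So assume \(2^\mu < \lambda\) and \(\mu < \kappa\), and proceed by induction on \(\lambda\). Choosing a continuous-enough cofinal sequence \(\langle \lambda_i \rangle_{i<\mu}\) of regular cardinals in \([\kappa,\lambda)\) and invoking the pcf computation of \(\lambda^\mu\) as \(\max\mathrm{pcf}\{\lambda_i : i < \mu\}\) (legitimate because \(\lambda_i^\mu < \lambda\) by the inductive hypothesis), it suffices to show that no regular \(\theta\) with \(\lambda^+ < \theta \leq \lambda^\mu\) is the true cofinality \(\mathrm{tcf}\big(\prod_{i<\mu}\lambda_i / U\big)\) for an ultrafilter \(U\) on \(\mu\).

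So suppose toward a contradiction that \(\theta\) is such a cardinal, witnessed by a \(<_U\)-increasing cofinal scale \(\langle f_\alpha \rangle_{\alpha<\theta}\) in \(\prod_{i<\mu}\lambda_i\). Since \(\theta > \lambda \geq \kappa\) is regular and \(\mu < \kappa\), the cardinal \(\kappa\) is \((\mu^+,\theta)\)-strongly compact, so by \cref{KetonenThm} there is a uniform \(\mu^+\)-complete ultrafilter \(W\) over \(\theta\). I will use \(W\) to produce a \(<_U\)-upper bound of the scale, which is absurd for a cofinal sequence. Let \(j_W : V \to N\) be the ultrapower. As \(W\) is \(\mu^+\)-complete, \(\mathrm{crit}(j_W) \geq \mu^+\), so \(j_W\) fixes \(\mu\) and every subset of \(\mu\); in particular \(j_W(U) = U\) and \(j_W(f_\alpha) = j_W \circ f_\alpha\). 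Uniformity gives \([\mathrm{id}]_W \geq \sup j_W[\theta]\), so the \([\mathrm{id}]_W\)-th function of the image scale, which is exactly \(g := [\xi \mapsto f_\xi]_W\), dominates every \(j_W(f_\alpha)\) modulo \(U\); unwinding, for each \(\alpha < \theta\) one has \(\{ i < \mu : j_W(f_\alpha(i)) < g(i)\} \in U\), where \(g(i) = [\xi \mapsto f_\xi(i)]_W\).

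The mechanism of the contradiction is a reflection powered by the completeness of \(W\). Say a coordinate \(i\) is \emph{tame} if \(\xi \mapsto f_\xi(i)\) is constant modulo \(W\), with value \(\bar g(i) < \lambda_i\). If the set \(E\) of tame coordinates lies in \(U\), then since \(|E| \leq \mu < \mu^+\) and each \(\{\xi : f_\xi(i) = \bar g(i)\}\) lies in \(W\), their intersection lies in \(W\) by \(\mu^+\)-completeness; any two \(\xi \neq \xi'\) in this intersection satisfy \(f_\xi =_U \bar g =_U f_{\xi'}\), contradicting that the scale is strictly \(<_U\)-increasing. Thus the wild coordinates must be \(U\)-large. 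But on a wild coordinate \(i\) the pushforward of \(W\) along \(\xi \mapsto f_\xi(i)\) is a nonprincipal, hence uniform, \(\mu^+\)-complete ultrafilter on the regular cardinal \(\lambda_i < \lambda\); so \(\lambda_i\) too carries a \(\mu^+\)-complete uniform ultrafilter, and a pcf reduction of the scale to the product of these smaller cardinals lets the inductive hypothesis finish the job. This wild case is the step I expect to be the main obstacle: making the reduction uniform and ensuring it genuinely descends requires care with the choice of scale (a continuity/minimality condition on \(\langle \lambda_i \rangle\)) so that the recursion is well-founded, and it is precisely here that the interplay between uniformity and \(\mu^+\)-completeness—and the restriction \(\mu < \kappa\)—is essential.

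Finally I would address the two deferred points. The hypothesis \(\mu < \kappa\) was used to extract a \(\mu^+\)-complete ultrafilter from almost strong compactness; when \(\mathrm{cf}(\lambda) \geq \kappa\), note that \(\kappa > \omega\) forces \(\mathrm{cf}(\lambda)\) to be uncountable, so Silver's theorem that SCH cannot first fail at a cardinal of uncountable cofinality applies: a least failure of SCH above \(\kappa\) of uncountable cofinality would reflect to a stationary set of failures below it, which meets the interval \((\kappa,\lambda)\) and contradicts minimality. The remaining base case, in which \(\lambda = \kappa\) is itself singular (possible since an almost strongly compact cardinal may be a limit of strongly compact cardinals), is handled by the same argument, now drawing the inductive hypothesis from Solovay's theorem applied to the genuine strongly compact cardinals below \(\kappa\).
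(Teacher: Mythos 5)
The paper itself offers no proof of this statement---it is quoted, with a \qed, as a classical result essentially due to Solovay---so your proposal has to be measured against the standard argument, and it has a genuine gap at its core, located exactly where you flagged ``the main obstacle.'' The first problem is the inference ``nonprincipal, hence uniform'': this is false. The pushforward \(W_i\) of \(W\) under \(\xi\mapsto f_\xi(i)\) is indeed a nonprincipal \(\mu^+\)-complete ultrafilter over \(\lambda_i\) on a wild coordinate, but nothing stops it from concentrating on a set of cardinality less than \(\lambda_i\) (for instance on a measurable cardinal below \(\lambda_i\)); nonprincipality only rules out concentration on a point. The dichotomy with traction is not constant-versus-nonconstant but bounded-versus-unbounded: either \(g(i)<\sup j_W[\lambda_i]\) or \(g(i)\geq \sup j_W[\lambda_i]\). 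If the bounded coordinates form a \(U\)-large set you win outright: let \(h(i)\) be the least \(\beta<\lambda_i\) with \(g(i)<j_W(\beta)\); then \(h\in V\) and \(f_\alpha<_U h\) for every \(\alpha<\theta\), contradicting cofinality of the scale (this subsumes your tame case, which is correct but weaker). On the unbounded coordinates the pushforward genuinely is uniform---fine on a regular cardinal---but there the conclusion you draw is vacuous: the \(\lambda_i\) are eventually \(\geq\kappa\), and every regular cardinal \(\geq\kappa\) \emph{already} carries a uniform \(\mu^+\)-complete ultrafilter by \cref{KetonenThm}, since \(\kappa\) is almost strongly compact. So ``each \(\lambda_i\) carries such an ultrafilter'' yields no new information, there are no ``smaller cardinals'' to descend to, and no induction can be powered by it. This unbounded case is precisely where arguments using only uniform ultrafilters on regular cardinals stall, and it is why the classical proof does not proceed through scales at all: Solovay's argument uses the full content of \((\mu^+,\theta)\)-strong compactness, namely a \(\mu^+\)-complete \emph{fine} ultrafilter over \(P_\kappa(\theta)\) and the covering property of its ultrapower, to prove that \(\theta^{\nu}=\theta\) for every regular \(\theta\geq\kappa\) and \(\nu<\kappa\); from this, \(\lambda^\mu\leq(\lambda^+)^\mu=\lambda^+\) is immediate in your main case \(\mu<\kappa\), \(2^\mu<\lambda\).

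Two further points. Your base case ``\(\lambda=\kappa\) is itself singular'' cannot be repaired as written: the statement concerns the \emph{least} almost strongly compact cardinal, and below it there are no strongly compact cardinals whatsoever (every strongly compact cardinal is almost strongly compact, contradicting minimality), so there is no ``Solovay's theorem applied to the genuine strongly compact cardinals below \(\kappa\)'' to invoke; your pcf reduction also fails there, since it needs \(\lambda_i^\mu<\kappa\) for cofinally many \(\lambda_i<\kappa\), which is exactly the kind of SCH-type information that is unavailable below \(\kappa\). (The Solovay lemma route handles this case too, via \(\kappa^\mu\leq(\kappa^+)^\mu=\kappa^+\).) Finally, your \(\mu\geq\kappa\) case is right in outline---the singular cardinals in \((\kappa,\lambda)\) form a club in \(\lambda\), so a Silver-style stationary set of SCH failures must meet that interval---but it silently requires the standard induction converting ``SCH at every singular cardinal in \((\kappa,\lambda)\)'' into ``\(\delta^\mu\leq\delta^+\) for stationarily many \(\delta<\lambda\),'' which is the form of the hypothesis Silver's theorem actually needs.
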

\section{Ultrafilters in ultrapowers}\label{UltrafilterSection}
Suppose \(D\) is an ultrafilter and \(W\) is an \(M_D\)-ultrafilter. It is often
useful to know whether \(W\) belongs to \(M_D\). The Ultrapower Axiom yields
many instances in which \(W\in M_D\) must occur for \(D\) and \(W\) countably
complete ultrafilters; this fact is leveraged to prove most of the
consequences of UA in \cite{UA}. But it turns out that in certain situations, one can prove
that \(W\in M_D\) from large cardinal axioms alone.
 
The idea is that if one can \(W\) extend to a sufficiently complete \(V\)-ultrafilter \(W^*\), 
then using a result known as Kunen's {\it commuting ultrapowers lemma}
one obtains that \(j_{W^*}\restriction M_D\) is definable over \(M_D\),
and hence \(W\) belongs to \(M_D\).
In \cref{CommutingUltrapowersSection}, we give a proof of Kunen's result.
(The reason we include this is to verify that the proof goes through
in the case that \(D\) is countably incomplete, which we need in order to answer
to Silver's question above a strongly compact cardinal.)

In \cref{SuffCompleteSection}, we prove that if \(\kappa\) is a strong limit cardinal
and \(W\) is \(\kappa\)-complete 
with respect to sets in \(M_D\), 
then \(W\) generates a \(\kappa\)-complete filter in \(V\). Thus if
\(\kappa\) is strongly compact, \(W\) extends to a \(\kappa\)-complete
ultrafilter \(W^*\), and so by the observation in the previous paragraph,
one can conclude that \(W\in M_D\).

Finally, \cref{AppxSection} is devoted to applications of the results of \cref{SuffCompleteSection}
to the theory of {\it pseudogrounds,} a generalization due to Hamkins
\cite{Hamkins} of the concept of a set forcing ground of \(V\) 
that appears to have a deep relationship with the theory of inner models for supercompact cardinals. 
These applications digress from the main thread of this paper,
and are not strictly speaking necessary to prove our main results.
What we show is that if \(\kappa\) is strongly compact, then 
\(\kappa\)-pseudogrounds are characterized by their most basic properties:
\begin{repthm}{CKappa}
        Suppose \(\kappa\) is strongly compact and \(M\) is an inner model. Then the
        following are equivalent:
        \begin{enumerate}[(1)]
        \item \(M\) is a \(\kappa\)-pseudoground.
        \item \(\kappa\) is strongly compact in \(M\) and the following hold:
            \begin{itemize}
                \item Every \(\kappa\)-complete ultrafilter
                over a set in \(M\) extends an ultrafilter of \(M\).
                \item Every \(\kappa\)-complete ultrafilter of \(M\)
                extends to a \(\kappa\)-complete ultrafilter of \(V\).
            \end{itemize}
        \item Every regular cardinal of \(M\) above \(\kappa\) has cofinality
            at least \(\kappa\) and every \(\kappa\)-complete ultrafilter
            over a set in \(M\) extends an ultrafilter of \(M\).
        \end{enumerate}
\end{repthm}

\subsection{Commuting ultrafilters and ultrapowers}\label{CommutingUltrapowersSection}
The following definition explains how elementary embeddings act on amenable classes.
\begin{defn}
Suppose \(M\) is a model of set theory. A class \(A\)
is {\it amenable to \(M\)} if \(A\subseteq M\) and 
\(A\cap x\in M\) for all \(x\in M\).

An elementary embedding \(j : M\to N\) is {\it cofinal}
if for every \(a\in N\), there is some \(X\in M\) such that \(a\in j(X)\). 

If \(j : M\to N\) is a cofinal elementary embedding, and \(A\) is 
an amenable class of \(M\),
then \(j(A) = \bigcup_{x\in P} j(A\cap x).\)
\end{defn}
If \(j' : M' \to N'\) extends \(j: M \to N\), and \(j'\restriction M : M \to j'(M)\) is 
a cofinal embedding, then \(j'(A) = j(A)\) for all \(A\) in \(M'\) amenable to \(M\).

If \(j_0 : M\to N_0\) is a cofinal elementary embedding 
and \(j_1 : M\to N_1\) is an amenable elementary embedding, then 
\(j_0(j_1): N_0\to j_0(N_1)\) is an elementary embedding.
\begin{defn}
    Suppose \(j_0 : V\to M_0\) and \(j_1 : V\to M_1\) are cofinal elementary
    embeddings. We say \(j_0\) and \(j_1\) {\it commute} if there is an
    isomorphism \(k : j_0(M_1)\to j_1(M_0)\) such that 
    \[j_1\restriction M_0 = k \circ j_0(j_1)
    \text{ and }j_0\restriction M_1 = k^{-1}\circ j_1(j_0)\]
\end{defn}
Note that we do not assume that
\(M_0\) and \(M_1\) are wellfounded.
If \(M_0\) and \(M_1\) are {\it transitive,} then \(k\) must be the identity, and
hence \(j_0\) and \(j_1\) commute if and only if \[j_0(j_1) = j_1\restriction
M_0\text{ and }j_1(j_0) = j_0\restriction M_1\]

For ultrafilters \(U\) and \(W\), whether \(j_U\) and \(j_W\) commute
is influenced by the relationship between
the filter product \(U\times W\) and the ultrafilter product \(U\otimes W\)
of \(U\) and \(W\).
\begin{defn}
    Suppose \(U\) and \(W\) are ultrafilters over sets \(X\) and \(Y\). 
    \begin{itemize}
        \item  {\it \(W\) is \(U\)-complete} if for any sequence \(\langle B_x :
        x\in X\rangle\subseteq W\), there is some \(A\in U\) such that
        \(\bigcap_{x\in A}B_x\in W\).
        \item The {\it filter product of \(U\) and \(W\)} is the filter
        \(U\times W\) generated by sets of the form \(A\times B\) for \(A\in U\)
        and \(B\in W\).
        \item The {\it ultrafilter product of \(U\) and \(W\)} is the
        ultrafilter
        \[U\otimes W = \{A\subseteq X\times Y : \forall^Ux\ \forall^Wy\ (x,y)\in
        A\}\]
    \end{itemize}
\end{defn}
The filter product is commutative up to canonical isomorphism, but in general
the ultrafilter product is not.

\begin{thm}[Blass]\label{BlassThm}
Suppose \(U\) and \(W\) are ultrafilters. Then the following are equivalent:
\begin{enumerate}[(1)]
\item \(W\) is \(U\)-complete.
\item \(j_U[W]\) generates \(j_U(W)\).
\item \(U\times W\) is an ultrafilter.
\item \(U\times W = U\otimes W\).
\end{enumerate}
\begin{proof}
Let \(X\) and \(Y\) be the underlying sets of \(U\) and \(W\).

{\it (1) implies (2):} Fix 
\([B_x]_U \in U\). The \(U\)-completeness of \(W\)
yields that for some \(A\in U\), \(\bigcap_{x\in A} B_x\in W\).
Note that \(j_U\left(\bigcap_{x\in A} B_x\right)\subseteq [B_x]_U\) since 
\(\bigcap_{x\in A} B_x\subseteq B_x\)
for \(U\)-almost all \(x\in X\). Thus \([B_x]_U\) contains
and element of \(j_U[W]\), as desired.

{\it (2) implies (3):} Fix \(R\subseteq X\times Y\).
We will prove that either \(R\) or its complement contains a set in \(U\).
Let \(R_x = \{y\in Y : (x,y)\in R\}\).
Assume without loss of generality that \(R_x\in W\) for \(U\)-almost all \(x\in X\).
Then \([R_x]_U\in j_U(W)\), so by (2), there is some \(B\in W\) such that \(j_U(B)\subseteq [R_x]_U\).
Fix \(A\in U\) such that for all \(x\in A\),
\(B\) is contained in \(R_x\).
Then \(A\times B\subseteq R\) and \(A\times B\in U\times W\).

{\it (3) implies (4):} This is trivial since by definition 
\(U\times W \subseteq U\otimes W\), so if \(U\times W\) is an ultrafilter,
then \(U\times W = U\otimes W\) by maximality.

{\it (4) implies (1):} Fix \(\langle B_x\rangle_{x\in X}\subseteq W\).
Let \(R = \{(x,y)\in X\times Y : y\in B_x\}\). Then \(R\in U\otimes W\)
by definition. Therefore \(R\in U\times W\) by
(4), so fix \(A\in U\) and \(B\in W\) such that
\(A\times B\subseteq R\). Then for all \(x\in A\),
\(B\subseteq B_x\). In other words, \(B\subseteq \bigcap_{x\in A} B_x\),
so since \(B\in W\), \(\bigcap_{x\in A} B_x\in W\). This shows that \(W\)
is \(U\)-complete.
\end{proof}
\end{thm}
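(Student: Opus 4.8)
The plan is to establish the four-way equivalence by proving the cycle of implications \((1)\Rightarrow(2)\Rightarrow(3)\Rightarrow(4)\Rightarrow(1)\). The one tool that does all the work is the fundamental theorem of ultrapowers (\L o\'s's theorem) applied to \(j_U : V\to M_U\): it lets me decode an arbitrary element of the ultrapower \(M_U\) as a class \([g]_U\) of a function \(g\) modulo \(U\), and in particular it identifies the elements of the \(M_U\)-ultrafilter \(j_U(W)\) as exactly the classes \([B_x]_U\) for which \(B_x\in W\) holds for \(U\)-almost every \(x\). Condition \((2)\) is thus the bridge between the purely combinatorial conditions \((1)\), \((3)\), \((4)\) and the ultrapower picture, and I would organize the whole argument around it.

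For \((1)\Rightarrow(2)\), I would take an arbitrary element of \(j_U(W)\), written \([B_x]_U\) with \(B_x\in W\) for \(U\)-almost every \(x\), and apply \(U\)-completeness to the sequence \(\langle B_x\rangle\) to obtain \(A\in U\) with \(B:=\bigcap_{x\in A}B_x\in W\). Since \(B\subseteq B_x\) for every \(x\in A\), hence for \(U\)-almost every \(x\), \L o\'s's theorem gives \(j_U(B)\subseteq [B_x]_U\); as \(j_U(B)\in j_U[W]\), this exhibits the required generator below \([B_x]_U\). For \((2)\Rightarrow(3)\), I would show \(U\times W\) decides every \(R\subseteq X\times Y\). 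Writing \(R_x=\{y:(x,y)\in R\}\) and using that \(W\) is an ultrafilter, I may assume (after replacing \(R\) by its complement if necessary) that \(R_x\in W\) for \(U\)-almost every \(x\); then \([R_x]_U\in j_U(W)\), so by \((2)\) there is \(B\in W\) with \(j_U(B)\subseteq [R_x]_U\), which unwinds to an \(A\in U\) with \(B\subseteq R_x\) for all \(x\in A\). The rectangle \(A\times B\) then lies in \(U\times W\) and is contained in \(R\).

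The remaining two steps are lighter. For \((3)\Rightarrow(4)\) I would first record the general inclusion \(U\times W\subseteq U\otimes W\) (each generating rectangle \(A\times B\) satisfies \(\forall^Ux\ \forall^Wy\ (x,y)\in A\times B\), and \(U\otimes W\) is upward closed); if \(U\times W\) happens to be an ultrafilter, maximality of filters forces equality with the proper filter \(U\otimes W\). For \((4)\Rightarrow(1)\), given \(\langle B_x\rangle\subseteq W\) I would encode it as the relation \(R=\{(x,y):y\in B_x\}\), which lies in \(U\otimes W\) by definition; condition \((4)\) places \(R\) in \(U\times W\), so some rectangle \(A\times B\subseteq R\) with \(A\in U\), \(B\in W\) exists, and then \(B\subseteq\bigcap_{x\in A}B_x\) witnesses \(U\)-completeness.

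The step I expect to demand the most care is \((2)\Rightarrow(3)\): one must correctly handle the case split between \(R_x\in W\) and its complement being in \(W\), keep straight the distinction between the external inclusion \(B\subseteq R_x\) for \(U\)-almost all \(x\) and the internal statement \(j_U(B)\subseteq [R_x]_U\), and verify that the extracted rectangle actually sits inside \(R\). The other potential pitfall, relevant in \((3)\Rightarrow(4)\) and \((4)\Rightarrow(1)\), is getting the order of the quantifiers in \(U\otimes W\) right, since the ultrafilter product is not symmetric; fixing the convention \(\forall^Ux\ \forall^Wy\) at the outset and applying it consistently should remove the ambiguity.
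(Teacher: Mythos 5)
Your proposal is correct and follows essentially the same route as the paper's proof: the identical cycle \((1)\Rightarrow(2)\Rightarrow(3)\Rightarrow(4)\Rightarrow(1)\), with the same use of \L o\'s's theorem to pass between \(j_U(W)\) and \(U\)-almost-everywhere statements, the same rectangle extraction in \((2)\Rightarrow(3)\), and the same maximality and encoding arguments in the last two steps. No gaps; your treatment of the without-loss-of-generality case split is, if anything, slightly more explicit than the paper's.
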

The equivalence of (1) and (3) in \cref{BlassThm} implies that an ultrafilter
 \(W\) is \(U\)-complete if and only if
\(U\) is \(W\)-complete, which is a bit surprising given the original definition.

\begin{lma}\label{CommuteEquiv}
Suppose \(U\) and \(W\) are ultrafilters over \(X\) and \(Y\).
The following are equivalent:
\begin{enumerate}[(1)]
    \item \(j_U\) and \(j_W\) commute.
    \item Let \(\textnormal{flip}(x,y) = (y,x)\).
    Then \(\textnormal{flip}_*(U\otimes W) = W\otimes U\).\footnote{
        Recall that \(f_*(U) = \{A : f^{-1}[A]\in U\}\);
        see \cref{PushDef}.
    }
    \item The quantifiers associated to \(U\) and \(W\) commute. That is,
        for any predicate \(R\) on \(X\times Y\),
        \[\forall^U x\ \forall^W y\ R(x,y)\iff \forall^W y\ \forall^U x\ R(x,y)\]
\end{enumerate}
\begin{proof}
{\it (1) if and only if (2):} 
There is a natural isomorphism between \(M_{U\otimes W}\) and
\(j_U(j_W)(M_U)\) sending a point \([f]_{U\otimes W}\), where
\(f : X\times Y \to V\), to the point \([[\lambda f]_U]_{j_U(W)}\)
where \(\lambda f : X\to V^Y\) is the function defined by 
\(\lambda f(x)(y) = f(x,y)\).
For notational convenience,
we will identify the two models via this isomorphism.

This identification results in the following equalities: 
\begin{align*}
    j_{U\otimes W} &= j_U(j_W)\circ j_U\\
    [\text{id}]_{U\otimes W} &= (j_U(j_W)([\text{id}]_U), j_U([\text{id}]_W))
\end{align*}
Under the corresponding identification of \(M_{W\otimes U}\) with
\(j_W(j_U)(M_W)\),
\begin{align*}
    j_{W\otimes U} &= j_W(j_U)\circ j_W\\
    [\text{id}]_{W\otimes U} &= (j_W(j_U)([\text{id}]_W), j_W([\text{id}]_U))
\end{align*}

Given these equalities and \cref{PushforwardLma},
the function \(\text{flip}(x,y) = (y,x)\) satisfies \(\text{flip}_*(U\otimes W) = W\otimes U\)
if and only if there is an elementary embedding \(k : M_{W\otimes U}\to M_{U\otimes W}\)
satisfying
\begin{align}k \circ j_W(j_U)\circ j_W &= j_U(j_W)\circ j_U\label{CommuteEq}\\
k(j_W(j_U)([\text{id}]_W), j_W([\text{id}]_U) &= \text{flip}(j_U(j_W)([\text{id}]_U), j_U([\text{id}]_W))\label{SeedEq} 
\end{align}

We claim that an embedding satisfies \cref{CommuteEq} and \cref{SeedEq} if and only if 
it is an isomorphism such that
\(k\circ j_W(j_U) = j_U\restriction M_W\),
and \(k^{-1}\circ j_U(j_W) = j_W\restriction M_U\), or in other words, \(j_U\) and \(j_W\) commute.

For the forwards direction, assume \(k\) satisfies \cref{CommuteEq} and \cref{SeedEq}.
We claim \(k\) is surjective. Let
\[S = j_U(j_W)\circ j_U[V] \cup \{j_U(j_W)([\text{id}]_U), j_U([\text{id}]_W)\}\] 
Then \L o\'s's Theorem implies every element of
\(j_U(j_W)(M_U)\) is definable in \(j_U(j_W)(M_U)\) 
from parameters in \(S\).
But \cref{CommuteEq} and \cref{SeedEq} imply that
\(S\subseteq \text{ran}(k)\). Since \(\text{ran}(k)\) is closed under 
definability in \(j_U(j_W)(M_U)\), every point in \(j_U(j_W)(M_U)\) 
is in \(\text{ran}(k)\), so \(k\) is surjective.
It follows that \(k\) is an isomorphism.

To see that \(k\circ j_W(j_U) = j_U\restriction M_W\), notice that
\(k\circ j_W(j_U)\) agrees on \(j_W[V]\) with \(j_U\) by 
\cref{CommuteEq}, 
and \(k(j_W(j_U)([\text{id}]_W)) = j_U([\text{id}]_W)\).
Hence \(k\circ j_W(j_U)\) and \(j_U\) agree on 
\(j_W[V]\cup \{[\text{id}]_W\}\).
Since every point in \(M_W\) is definable in \(M_W\) from 
parameters in \(j_W[V]\cup \{[\text{id}]_W\}\), 
\(k\circ j_W(j_U) = j_U\restriction M_W\) by elementarity.
The fact that \(k^{-1}\circ j_U(j_W) = j_W\restriction M_U\) is proved by a similar argument.

The reverse direction of the claim is very similar, so we omit the proof.
We also omit the proof of the equivalence of (2) and (3), since there are
no ideas there, and anyway (3) was included only for aesthetic reasons.
\end{proof}
\end{lma}

We now prove the Commuting Ultrapowers Lemma using Blass's result.
\begin{thm}[Kunen]\label{CompleteCommute} Suppose \(U\)
and \(W\) are ultrafilters such that \(W\) is \(U\)-complete. Then \(j_U\) and \(j_W\)
commute.
\begin{proof}
By \cref{BlassThm}, \(U\otimes W = U\times W\).
Therefore \(\text{flip}_*(U\otimes W) = \text{flip}_*(U\times W) = W\times U\).
Since \(W\times U\) is an ultrafilter, by \cref{BlassThm}
(with the roles of \(U\) and \(W\) exchanged), \(W\times U = W\otimes U\).
Putting these equations together, \(\text{flip}_*(U\otimes W) = W\otimes U\).
Applying \cref{CommuteEquiv}, \(j_U\) and \(j_W\) commute.
\end{proof}
\end{thm}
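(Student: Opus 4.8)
The plan is to avoid manipulating the embeddings directly and instead reduce everything to the combinatorial criterion of \cref{CommuteEquiv}: the embeddings $j_U$ and $j_W$ commute exactly when $\textnormal{flip}_*(U\otimes W) = W\otimes U$, where $\textnormal{flip}(x,y) = (y,x)$. So it suffices to establish this single equation between ultrafilter products, and I would do so by applying Blass's theorem (\cref{BlassThm}) to convert the hypothesis that $W$ is $U$-complete into an equality between a filter product and an ultrafilter product.

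First I would invoke \cref{BlassThm}: since $W$ is $U$-complete, the two products coincide, $U\otimes W = U\times W$. Next I would push forward along $\textnormal{flip}$. The filter product is symmetric up to the flip isomorphism—a generating rectangle $A\times B$ with $A\in U$ and $B\in W$ is carried to $B\times A$ with $B\in W$ and $A\in U$—so $\textnormal{flip}_*(U\times W) = W\times U$. Chaining the two, I obtain $\textnormal{flip}_*(U\otimes W) = W\times U$.

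The remaining step is to upgrade the filter $W\times U$ to the full ultrafilter product $W\otimes U$. Here I would apply \cref{BlassThm} a second time, now with the roles of $U$ and $W$ interchanged: $W\times U = W\otimes U$ holds as soon as $W\times U$ is an ultrafilter. And it is one: $\textnormal{flip}$ is a bijection, so $\textnormal{flip}_*$ sends ultrafilters to ultrafilters, and $W\times U = \textnormal{flip}_*(U\otimes W)$ is the pushforward of the ultrafilter $U\otimes W$. Thus $\textnormal{flip}_*(U\otimes W) = W\times U = W\otimes U$, and \cref{CommuteEquiv} yields that $j_U$ and $j_W$ commute.

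The one passage that deserves care—and the reason the statement is not transparent—is exactly this move from $W\times U$ to $W\otimes U$, i.e.\ the fact that $U$-completeness of $W$ forces $W\times U$ to be an ultrafilter. This is the surprising symmetry of the completeness relation remarked upon after \cref{BlassThm} (that $W$ is $U$-complete iff $U$ is $W$-complete). In the route above it emerges for free from the observation that a pushforward of an ultrafilter under a bijection is again an ultrafilter, so no separate symmetry lemma is required; but conceptually it is the crux of why commutation holds, and I would expect any alternative proof to confront the same point.
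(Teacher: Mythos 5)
Your proof is correct and follows essentially the same route as the paper's: apply Blass's theorem to get \(U\otimes W = U\times W\), push forward along \(\textnormal{flip}\) to get \(W\times U\), apply Blass's theorem again with the roles of \(U\) and \(W\) exchanged, and conclude via \cref{CommuteEquiv}. Your explicit justification that \(W\times U\) is an ultrafilter (as the pushforward of the ultrafilter \(U\otimes W\) under a bijection) is exactly the point the paper leaves implicit, so the two arguments coincide.
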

Whether the converse of \cref{CompleteCommute} is provable in ZFC an open question. The converse restricted 
to countably complete ultrafilters is an easy consequence 
of the Ultrapower Axiom. The author has also proved that the converse follows
from the Generalized Continuum Hypothesis. Thus another consequence of UA can be
verified by a classical axiom.

\subsection{Sufficiently complete \(M_D\)-ultrafilters are in \(M_D\)}\label{SuffCompleteSection}
In this subsection we prove our main theorem on the amenability of ultrafilters:
\begin{thm}\label{AmenableThm} Suppose \(\delta\) is a cardinal and
    \(\kappa\geq \delta\) is a strong limit cardinal.
    Suppose \(D\) is an ultrafilter over a set of size less than \(\delta\)
    and \(X\) is a set in \(M_D\). Suppose
    \(W\) is an \(M_D\)-\(\kappa\)-complete \(M_D\)-ultrafilter over \(X\in M_D\). Assume
    that every \(\kappa\)-complete filter over \(X\) extends to a \(\delta\)-complete
    ultrafilter. Then \(W\in M_D\).
\end{thm}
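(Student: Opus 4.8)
The plan is to extend $W$ to a $\delta$-complete $V$-ultrafilter $W^*$ over $X$ and then invoke Kunen's Commuting Ultrapowers Lemma (\cref{CompleteCommute}) to realize $W$ as an ultrafilter derived from an embedding that is definable over $M_D$. Write $Z$ for the underlying set of $D$, so $|Z| < \delta$, and identify each element of $W$ with its extension, a genuine subset of $X$. The first task is to show that the $V$-filter $\langle W\rangle$ generated by $W$ is $\kappa$-complete (it is proper, since each of its members contains a nonempty element of $W$); the hypothesis will then furnish a $\delta$-complete ultrafilter $W^*\supseteq\langle W\rangle$. Because $W$ is an $M_D$-ultrafilter, any $A\in P^{M_D}(X)$ lies in $W$ if and only if it lies in $W^*$: otherwise $A$ and $X\setminus A$ would both lie in $W^*$. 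Thus $W = W^*\cap P^{M_D}(X)$.

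I expect the $\kappa$-completeness of $\langle W\rangle$ to be the heart of the matter, and the main obstacle is that $W$ is external to $M_D$, so the $M_D$-$\kappa$-completeness of $W$ (which, recall, refers to internal families whose extension has $V$-cardinality below $\kappa$) cannot be applied directly to an external sequence of its members. To overcome this, fix $\gamma<\kappa$ and $\langle A_i\rangle_{i<\gamma}\subseteq W$ with $A_i = [f_i]_D$, and consider the internal \emph{type} map $\tau\in M_D$ represented by $z\mapsto\bigl(x\mapsto\{i<\gamma : x\in f_i(z)\}\bigr)$, an $M_D$-function from $X$ into $Y:=P^{M_D}(j_D(\gamma))$ satisfying $a\in A_i$ if and only if $j_D(i)\in\tau(a)$. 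The decisive point is that, since $\kappa$ is a strong limit and $|Z|,\gamma<\kappa$, the ordinal $j_D(\gamma)$ has $V$-cardinality at most $\gamma^{|Z|}<\kappa$, so the target space $Y$ has $V$-cardinality at most $2^{\gamma^{|Z|}}<\kappa$.

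Now push $W$ forward to the $M_D$-ultrafilter $W' = \tau_*(W)$ over $Y$ (see \cref{PushDef}), which is again $M_D$-$\kappa$-complete. Since $|Y|^V<\kappa$, the partition of $Y$ into singletons is an element of $M_D$ whose extension has $V$-cardinality below $\kappa$; were no singleton in $W'$, the internal family of co-singletons would lie in $W'$ and have empty intersection, contradicting $M_D$-$\kappa$-completeness. Hence some $\{S_0\}\in W'$, i.e.\ $W'$ is principal with generator $S_0\in Y$. For each $i<\gamma$ the set $\{S\in Y : j_D(i)\in S\}$ has $\tau$-preimage $A_i\in W$ and so belongs to $W'$, forcing $j_D(i)\in S_0$; thus $j_D[\gamma]\subseteq S_0$. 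Consequently $A' := \tau^{-1}[\{S_0\}]\in W$, and $S_0\supseteq j_D[\gamma]$ gives $A'\subseteq\bigcap_{i<\gamma}A_i$. This exhibits a member of $W$ below the intersection, so $\langle W\rangle$ is $\kappa$-complete.

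For the final step I would first observe that $W^*$ is $D$-complete: given $\langle B_z\rangle_{z\in Z}\subseteq W^*$, the $\delta$-completeness of $W^*$ together with $|Z|<\delta$ yields $\bigcap_{z\in Z}B_z\in W^*$, which is the required condition witnessed by $A=Z\in D$. Then \cref{CompleteCommute} applies to give that $j_D$ and $j_{W^*}$ commute, so there is an isomorphism $k : j_D(M_{W^*})\to j_{W^*}(M_D)$ with $j_{W^*}\restriction M_D = k\circ j_D(j_{W^*})$. Here $j_D(j_{W^*})$ is exactly the internal ultrapower embedding of $M_D$ by $j_D(W^*)\in M_D$, hence is definable over $M_D$. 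For $A\in P^{M_D}(X)$ we get $A\in W$ iff $[\mathrm{id}]_{W^*}\in j_{W^*}(A) = k\bigl(j_D(j_{W^*})(A)\bigr)$ iff $a\in j_D(j_{W^*})(A)$, where $a := k^{-1}([\mathrm{id}]_{W^*})$. Since $[\mathrm{id}]_{W^*}\in j_{W^*}(X)\subseteq j_{W^*}(M_D)$ and $k^{-1}$ maps into $j_D(M_{W^*})\subseteq M_D$, the seed $a$ lies in $M_D$. Therefore $W$ is the $M_D$-ultrafilter over $X$ derived from the $M_D$-definable embedding $j_D(j_{W^*})$ using the point $a\in M_D$, so $W$ is definable over $M_D$ from parameters in $M_D$, and hence $W\in M_D$.
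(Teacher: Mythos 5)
Your proposal is correct, and its overall architecture coincides with the paper's: show that \(W\) generates a \(\kappa\)-complete filter in \(V\), extend it to a \(\delta\)-complete ultrafilter \(W^*\), check that \(W^*\) is \(D\)-complete, invoke the Commuting Ultrapowers Lemma (\cref{CompleteCommute}), and read off \(W\) as the \(M_D\)-ultrafilter derived from the internally definable embedding \(j_D(j_{W^*})\) using a seed in \(M_D\) --- this last stage, including the computation with the isomorphism \(k\), is essentially word-for-word the paper's argument (you even make explicit the \(D\)-completeness verification that the paper leaves implicit). Where you genuinely diverge is the completeness step. The paper factors it through two reusable lemmas: \cref{UltraCov} (\(M_D\) has the \(\kappa\)-cover property when \(\kappa\) is a strong limit and \(D\) is small) and \cref{ContinuumComplete} (an \(M\)-\(\kappa\)-complete \(M\)-ultrafilter over \emph{any} model with the \(\kappa\)-cover property generates a \(\kappa\)-complete filter), the latter proved via Kunen's weak-amenability characterization (\cref{KunenAmenable}): strong limitness makes \(j_W\) surjective onto \(P^N(j(\iota))\), whence \(W\cap\tau\in M\) for internal covers \(\tau\), and internal completeness finishes. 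Your argument instead internalizes the given family \(\langle A_i\rangle_{i<\gamma}\) directly via representing functions, pushes \(W\) forward along the type map into \(Y = P^{M_D}(j_D(\gamma))\), and uses the \(V\)-smallness of \(Y\) (again strong limitness) together with \(M_D\)-\(\kappa\)-completeness to force the pushforward to be principal; the generator \(S_0\supseteq j_D[\gamma]\) then pulls back to an internal member of \(W\) below the intersection. The two routes exploit the same two facts --- strong limitness bounds the \(V\)-size of internal objects, and \(M_D\)-\(\kappa\)-completeness applies to internal families that are small in \(V\) --- but package them differently: yours is self-contained and bypasses both the cover property and \cref{KunenAmenable}, at the cost of being specific to ultrapowers \(M_D\); the paper's decomposition is more abstract, and its intermediate lemmas earn their keep by being reused elsewhere (e.g., \cref{ContinuumComplete} in \cref{AppxEquiv} and \cref{CKappa}).
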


The main point is that in the situation of \cref{AmenableThm}, the
\(M_D\)-ultrafilter \(W\) can be extended to a \(\delta\)-complete ultrafilter:
\begin{prp}\label{ContinuumComplete} Suppose
    \(\kappa\) is a strong limit cardinal and \(M\) is a model of set theory with
    the \(\kappa\)-cover property.
    Suppose \(W\) is an \(M\)-\(\kappa\)-complete
    \(M\)-ultrafilter. Then \(W\) generates a \(\kappa\)-complete filter.
\end{prp}
This in turn follows from Kunen's analysis of weakly amenable ultrafilters,
which we state in a very general form:
\begin{thm}[Kunen]\label{KunenAmenable} Suppose \(M\) is a model of set theory,
\(U\) is an \(M\)-ultrafilter over \(X\in M\), and \(\iota\) is an
\(M\)-cardinal. Let \(j : M\to N\) be the ultrapower of \(M\) by \(U\). Then the
following are equivalent:
\begin{enumerate}[(1)]
\item For all \(\sigma\subseteq P^M(X)\) with \(\sigma\in M\) and \(|\sigma|^M =
      \iota\), \(U\cap \sigma \in M\).
\item For all \(B\in P^N(j(\iota))\), \(j^{-1}[B]\in M\).
\end{enumerate}
\begin{proof}
    {\it (1) implies (2):} Fix \(B\in P^N(j(\iota))\). Let \(f: X\to
    P^M(\iota)\) be a function in \(M\) such that \(B = [f]_U^M\). For \(\xi <
    \iota\), let \(A_\xi = \{x\in X : \xi \in f(x)\}\). Note that the sequence
    \(\langle A_\xi\rangle_{\xi < \iota}\) belongs to \(M\). Let \(\sigma =
    \{A_\xi : \xi < \iota\}\). Now \(j^{-1}[B] = \{\xi < \iota : A_\xi\in U\} =
    \{\xi < \iota : A_\xi\in U\cap \sigma\}\). But by (1), \(U\cap\sigma\in M\).
    Hence \(j^{-1}[B]\in M\).

    {\it (2) implies (1):} Fix \(\sigma\subseteq P^M(X)\) and a surjection \(f :
    \iota\to \sigma\) that belongs to \(M\). Let \(a = [\text{id}]_U\). Let \(B
    = \{\xi < j(\iota) : a \in j(f)(\xi)\}\). Clearly \(B\in P^N(j(\iota))\).
    Note that \(j(\xi)\in B\) if and only if \(a\in j(f(\xi))\), which happens
    if and only if \(f(\xi)\in U\). In other words, \(U\cap \sigma = \{f(\xi) :
    \xi\in  j^{-1}[B]\}\). By (2), \(j^{-1}[B]\in M\), and so \(U\cap \sigma\in
    M\).
\end{proof}
\end{thm}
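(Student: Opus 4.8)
The plan is to set up the standard correspondence, carried out inside \(M\), between subsets \(B \in P^N(j(\iota))\) and \(\iota\)-indexed families \(\langle A_\xi\rangle_{\xi<\iota}\) of subsets of \(X\), and to observe that under this correspondence the operation \(B \mapsto j^{-1}[B]\) is computed entirely from the data ``which \(A_\xi\) lie in \(U\),'' i.e.\ from \(U\) restricted to the range of the family. Since \(N\) is an ultrapower of \(M\) by \(U\), \L o\'s's theorem lets me represent any \(B \in P^N(j(\iota))\) as \(B = [f]_U\) for a function \(f : X \to P^M(\iota)\) in \(M\) (adjusting \(f\) on a \(U\)-null set so that \(f(x) \subseteq \iota\) everywhere). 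Setting \(A_\xi = \{x \in X : \xi \in f(x)\}\) produces a sequence \(\langle A_\xi\rangle_{\xi<\iota} \in M\), and a one-line \L o\'s computation gives the key identity
\[ j(\xi) \in B \iff A_\xi \in U, \qquad \text{so} \qquad j^{-1}[B] = \{\xi < \iota : A_\xi \in U\}. \]
This identity drives both implications, and since it is purely a statement about \(U\)-membership decided by \L o\'s, it is unaffected by any illfoundedness of \(N\).

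For \((1)\Rightarrow(2)\) I would fix \(B = [f]_U\), form the sequence \(\langle A_\xi\rangle_{\xi<\iota}\) and its range \(\sigma = \{A_\xi : \xi < \iota\} \in M\), and rewrite the identity as \(j^{-1}[B] = \{\xi < \iota : A_\xi \in U \cap \sigma\}\). Hypothesis \((1)\) applied to \(\sigma\) yields \(U \cap \sigma \in M\), whereupon \(j^{-1}[B]\) is defined in \(M\) by separation from the two parameters \(\langle A_\xi\rangle_{\xi<\iota}\) and \(U \cap \sigma\); hence \(j^{-1}[B] \in M\).

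For \((2)\Rightarrow(1)\) I would fix \(\sigma \subseteq P^M(X)\) together with a surjection \(f : \iota \to \sigma\) lying in \(M\), set the seed \(a = [\mathrm{id}]_U \in j(X)\), and define \(B = \{\xi < j(\iota) : a \in j(f)(\xi)\}\), which is a member of \(P^N(j(\iota))\) by separation in \(N\). Since \(j(f)(j(\xi)) = j(f(\xi))\) and \(a \in j(f(\xi)) \iff f(\xi) \in U\) by \L o\'s, I obtain \(j^{-1}[B] = \{\xi < \iota : f(\xi) \in U\}\). Hypothesis \((2)\) places this set in \(M\), and then \(U \cap \sigma = f[\,j^{-1}[B]\,]\) belongs to \(M\) as the image of an \(M\)-set under the \(M\)-function \(f\).

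The step I expect to demand the most care is the cardinality bookkeeping in \((1)\Rightarrow(2)\): the range \(\sigma\) of \(\langle A_\xi\rangle\) may have \(M\)-cardinality strictly below \(\iota\) when the \(A_\xi\) repeat, whereas \((1)\) is stated for families of size exactly \(\iota\). I would resolve this by noting that the ``size \(=\iota\)'' and ``size \(\leq\iota\)'' formulations of \((1)\) coincide whenever \(P^M(X)\) has at least \(\iota\) elements, since any \(\sigma\) of size below \(\iota\) can be enlarged by fresh subsets of \(X\) to a family \(\sigma'\) of size exactly \(\iota\), and then \(U \cap \sigma = (U \cap \sigma') \cap \sigma \in M\); accordingly I would run the whole argument with families of size at most \(\iota\). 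The other point requiring vigilance is that, as \(N\) may be illfounded, the objects \(B\), \(j(f)\), and all the membership assertions above must be read inside \(N\) via \L o\'s rather than externally. Both the key identity and the definition of \(B\) in \((2)\Rightarrow(1)\) are arranged precisely so that this causes no trouble.
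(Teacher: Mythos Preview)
Your proposal is correct and follows essentially the same route as the paper's proof: in both directions you use the same representing function \(f\), the same sequence \(A_\xi = \{x\in X : \xi\in f(x)\}\), the same seed \(a=[\mathrm{id}]_U\), and the same \L o\'s identity \(j(\xi)\in B\iff A_\xi\in U\). Your explicit handling of the \(|\sigma|^M=\iota\) versus \(|\sigma|^M\le\iota\) discrepancy is in fact more careful than the paper, which silently applies (1) to a \(\sigma\) that may have strictly smaller \(M\)-cardinality.
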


\begin{proof}[Proof of \cref{ContinuumComplete}]
To show that \(W\) generates a \(\kappa\)-complete filter, it suffices to show
that for all \(\sigma\subseteq W\) with \(|\sigma| < \kappa\), \(\bigcap
\sigma\) is nonempty. By the \(\kappa\)-cover property, there is some
\(\tau\in M\) containing \(\sigma\) of cardinality less than \(\kappa\). 
Let \(\iota = |\tau|^M\).

Let \(j : M\to N\) be the ultrapower of \(M\) by \(W\). Since \(\kappa\) is a
strong limit cardinal, \(W\) is \(M\)-\(((2^\iota)^+)^M\)-complete. 
This implies that \(j[P^M(\iota)] =
j(P^M(\iota)) = P^N(j(\iota))\). (Recall that an \(M\)-ultrafilter \(U\) is
\(M\)-\(\delta\)-complete if and only if \(j(X) = j[X]\) for every \(X\in M\)
with \(|X|^M < \delta\).) In particular, 
for any \(B\in P^N(j(\iota))\), there
is some \(\bar B\in M\) with \(j(\bar B) = B\), so \(j^{-1}[B]\in M\) 
since \(j^{-1}[B] = \bar B\).

Applying \cref{KunenAmenable}, it follows that \(W\cap \tau\in M\). Since \(W\)
is \(M\)-\(\rho\)-complete, \(\bigcap (W\cap \tau)\) is nonempty. But
\(\sigma\subseteq W\cap \tau\), so \(\bigcap \sigma\) is nonempty, as desired.
\end{proof}

To obtain the cover hypothesis in \cref{ContinuumComplete}, we establish a
general fact about the covering properties of ultrapowers.
\begin{prp}\label{UltraCov} Suppose \(\kappa\) is a 
    strong limit cardinal and \(D\) is an ultrafilter 
    over a set \(X\) of size less than \(\kappa\). 
    Then \(M_D\) has the \(\kappa\)-cover property.
    \begin{proof}
        Fix \(\sigma\subseteq M_D\) with \(|\sigma| < \kappa\).
        Let \(\delta = |\sigma|\) and
        choose functions \(\langle f_\alpha\rangle_{\alpha < \delta}\)
        such that 
        \[\sigma = \{[f_\alpha]_U: {\alpha < \delta}\} = \{j_U(f_\alpha)([\text{id}]_U) : {\alpha < \delta}\}\]
        Let
        \(\langle g_\beta\rangle_{\beta < j_D(\delta)} 
        = j_D(\langle f_\alpha\rangle_{\alpha < \delta})\).
        Then \(\{j_U(f_\alpha) : {\alpha < \delta}\}\subseteq \{g_\beta : {\beta < j_D(\delta)}\}\),
        so \(\sigma\subseteq \{g_\beta([\text{id}]_U) : \beta < j_D(\delta)\}\).
        Clearly \(\{g_\beta([\text{id}]_U) : \beta < j_D(\delta)\}\in M_D\) and
        has cardinality at most \(|j_D(\delta)| \leq \delta^{|X|} < \kappa\)
        since \(\kappa\) is a strong limit cardinal.
    \end{proof}
\end{prp}

\cref{AmenableThm} is now a matter of citing the preceding results in
the right order.
\begin{proof}[Proof of \cref{AmenableThm}]
    By \cref{UltraCov}, \(M_D\) has the
    \(\kappa\)-cover property, so by \cref{ContinuumComplete}, 
    the \(M_D\)-\(\kappa\)-complete
    \(M_D\)-ultrafilter \(W\) is \(\kappa\)-complete,
    or in other words, it generates a \(\kappa\)-complete filter.

    Let \(F\) be the \(\kappa\)-complete filter generated by \(W\). The filter
    \(F\) extends to a \(\delta\)-complete ultrafilter \(U\) by our large
    cardinal hypothesis. Now we apply the Commuting Ultrapowers Lemma
    (\cref{CompleteCommute}) to conclude that \(U\cap M_D\) belongs to \(M_D\).
    More precisely, the Commuting Ultrapowers Lemma yields an isomorphism \(k :
    j_U(M_D)\to j_D(M_U)\) such that \(k\circ j_U\restriction M_D = j_D(j_U)\).
    We therefore have \(A\in U\cap M\) if and only if \(A\in P^M(X)\) and
    \([\text{id}]_U\in j_U(A)\), and this holds if and only if
    \(k([\text{id}]_U) \in j_D(j_U)(A)\). Clearly the set
    \[\{A\in P^{M_D}(X) : k([\text{id}]_U) \in j_D(j_U)(A)\}\]
    belongs to \(M_D\), since it is definable from parameters over \(M_D\).
    Therefore \(U\cap M_D\in M_D\).

    But \(U\cap M_D = W\). This completes the proof.
\end{proof}

\subsection{The approximation property}\label{AppxSection}
This section proves some basic structural results about pseudogrounds
under large cardinal assumptions.
We will show that if there is a proper class of
strongly compact cardinals, then the pseudogrounds are closed under
the fundamental model constructions of set theory: generic extensions
and extender ultrapowers.

For the sake of exposition, let us recall a theorem of Woodin
and Usuba that motivates the results of this section. This
requires some definitions.
\begin{defn}
An ultrafilter \(\mathcal U\) over a family \(F\) of subsets of \(X\) is {\it normal} 
if for any \(\langle A_x : x\in X\rangle\) with \(A_x\in \mathcal U\) for all \(x\in X\),
the {\it diagonal intersection} \[\triangle_{x\in X} A_x = \left\{\sigma\in F : \sigma\in \bigcap_{x\in \sigma} A_x\right\}\]
belongs to \(\mathcal U\).

A cardinal \(\kappa\) is {\it supercompact} if for all \(\lambda \geq \kappa\),
there is a \(\kappa\)-complete normal fine ultrafilter over \(P_\kappa(\lambda)\),
or equivalently, there is an elementary embedding \(j : V\to M\) where
 \(M\) is an inner model closed under \(\lambda\)-sequences.

An inner model \(M\) is a {\it weak extender model for the supercompactness of \(\kappa\)}
if for all \(\lambda\geq \kappa\), there is a \(\kappa\)-complete normal fine ultrafilter 
\(\mathcal U\) over \(P_\kappa(\lambda)\) such that \(P_\kappa(\lambda)\cap M\in \mathcal U\)
and \(\mathcal U\cap M\in M\).
\end{defn}

Woodin and Usuba independently proved the following theorem:
\begin{thm}\label{WoodinUsuba}
If \(M\) is a weak extender model for the supercompactness of \(\kappa\), then
\(M\) is a \(\kappa\)-pseudoground.
\end{thm}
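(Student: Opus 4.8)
The plan is to verify the two clauses defining a \(\kappa\)-pseudoground—the \(\kappa\)-cover and \(\kappa\)-approximation properties—directly from the supercompactness ultrafilters witnessing that \(M\) is a weak extender model. By standard coding it suffices to treat subsets of an ordinal, so fix a cardinal \(\lambda\geq\kappa\) and let \(\mathcal U\) be a \(\kappa\)-complete normal fine ultrafilter on \(P_\kappa(\lambda)\) with \(P_\kappa(\lambda)\cap M\in\mathcal U\) and \(\mathcal U\cap M\in M\). I would form the ultrapower \(j=j_\mathcal U:V\to N\), taken in the expanded language \((V,\in,M)\) so that \(j(M)\) is a well-defined inner model of \(N\), and record the three facts that drive everything: \(\mathrm{crit}(j)=\kappa\) and \(\lambda<j(\kappa)\); the seed satisfies \([\mathrm{id}]_\mathcal U=j[\lambda]\), and since \(P_\kappa(\lambda)\cap M\in\mathcal U\) this yields \(j[\lambda]\in j(M)\) with \(j(M)\vDash|j[\lambda]|<j(\kappa)\); and, using \(\mathcal U\cap M\in M\), the internal ultrapower \(j'_M:M\to N'=\mathrm{Ult}(M,\mathcal U\cap M)\) together with the factor embedding \(k:N'\to j(M)\) satisfying \(k\circ j'_M=j\restriction M\) and \(k([\mathrm{id}]^M_{\mathcal U\cap M})=j[\lambda]\).

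For the cover property, fix \(\sigma\subseteq\lambda\) with \(|\sigma|<\kappa\). Since \(|\sigma|<\kappa=\mathrm{crit}(j)\) we have \(j(\sigma)=j[\sigma]\subseteq j[\lambda]\), and \(j[\lambda]\in j(M)\) has \(j(M)\)-cardinality \(<j(\kappa)\). Thus in \(N\) the statement ``there is \(\tau\in j(M)\) with \(|\tau|^{j(M)}<j(\kappa)\) and \(j(\sigma)\subseteq\tau\)'' holds, witnessed by \(j[\lambda]\); applying elementarity of \((V,\in,M)\to(N,\in,j(M))\) to the parameter \(\sigma\) yields \(\tau\in M\) with \(|\tau|^M<\kappa\) and \(\sigma\subseteq\tau\). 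The general cover property reduces to this ordinal case by transporting \(\sigma\) along a bijection in \(M\).

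For the approximation property, suppose \(A\subseteq\lambda\) is \(\kappa\)-approximated by \(M\). The hypothesis ``\(A\cap\tau\in M\) for every \(\tau\in M\) with \(|\tau|^M<\kappa\)'' transfers under \(j\) to ``\(j(A)\cap\tau\in j(M)\) for every \(\tau\in j(M)\) with \(|\tau|^{j(M)}<j(\kappa)\)''; instantiating \(\tau=j[\lambda]\) gives \(j(A)\cap j[\lambda]\in j(M)\). Since \(j(A)\cap j[\lambda]=\{j(\alpha):\alpha\in A\}=j[A]\), we obtain \(j[A]\in j(M)\). Now the increasing enumeration of the set \(j[\lambda]\), computed in the transitive model \(j(M)\), is exactly \(j\restriction\lambda\); hence \(j\restriction\lambda\in j(M)\), and therefore \(A=(j\restriction\lambda)^{-1}[j[A]]\in j(M)\).

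It remains to descend from \(A\in j(M)\) to \(A\in M\), and this is the step I expect to be the main obstacle and the one place where \(\mathcal U\cap M\in M\) is genuinely used. The goal is \(P(\lambda)^{j(M)}=P(\lambda)^M\), for then \(A\in j(M)\) immediately gives \(A\in M\). One half is soft: \(N'\) is an inner model of \(M\) that \(M\) believes closed under \(\lambda\)-sequences, so \(P(\lambda)^{N'}=P(\lambda)^M\), and since \(\mathrm{crit}(k)>\lambda\) the map \(k\) fixes every subset of \(\lambda\) lying in \(N'\), giving \(P(\lambda)^M=P(\lambda)^{N'}\subseteq P(\lambda)^{j(M)}\). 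For the reverse inclusion it suffices to show \(A\in\mathrm{ran}(k)\): once \(A=k(B)\) with \(B\in N'\) and \(B\subseteq\lambda\), the inequality \(\mathrm{crit}(k)>\lambda\) forces \(k(B)=B\), whence \(A=B\in N'\subseteq M\). Thus the whole argument reduces to showing that the factor map \(k:N'\to j(M)\) is surjective on subsets of \(\lambda\)—equivalently, that passing from the internal ultrapower \(\mathrm{Ult}(M,\mathcal U\cap M)\) to the external ultrapower adds no new subset of \(\lambda\). This is the crux, and it is exactly where one must exploit that \(\mathcal U\cap M\) is an ultrafilter of \(M\) assigning the same measure as \(\mathcal U\) to all of \(M\)'s subsets of \(P_\kappa(\lambda)\); everything else is bookkeeping with the seed \(j[\lambda]\).
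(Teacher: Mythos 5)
The paper itself never proves this statement: it is quoted as background and attributed to Woodin and Usuba, so your proposal has to be judged on its own merits rather than against an in-paper argument. Most of it is sound. The cover argument is correct, though the recorded fact that \(j(M)\vDash |j[\lambda]|<j(\kappa)\) deserves one extra line: for \(\sigma\in P_\kappa(\lambda)\cap M\) one has \(|\sigma|^M\leq \mathrm{ot}(\sigma)<\kappa\) because order type is absolute, so \(P_\kappa(\lambda)\cap M=(P_\kappa(\lambda))^M\) and the claim follows from \([\mathrm{id}]_{\mathcal U}\in j((P_\kappa(\lambda))^M)\). Likewise the chain \(j(A)\cap j[\lambda]=j[A]\in j(M)\), then \(j\restriction\lambda\in j(M)\) via the increasing enumeration, then \(A\in j(M)\), is correct, as is the inclusion \(P(\lambda)^M=P(\lambda)^{N'}\subseteq P(\lambda)^{j(M)}\) using \(\mathrm{crit}(k)>\lambda\).

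The genuine gap is exactly the step you flag and leave unproved: surjectivity of \(k\) onto \(P(\lambda)^{j(M)}\). As you have set things up, with \(\mathcal U\) living on \(P_\kappa(\lambda)\) for the \emph{same} \(\lambda\) that carries \(A\), this cannot be extracted from the facts you recorded: \(k(X)=k[X]\) only for sets \(X\) with \(|X|^{N'}<\mathrm{crit}(k)\), and \(|P(\lambda)|^{N'}=(2^\lambda)^{N'}>\lambda\), so \(\mathrm{crit}(k)>\lambda\) gives no control over which elements of \(k(P(\lambda)^{N'})=P(\lambda)^{j(M)}\) lie in \(\mathrm{ran}(k)\). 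The missing idea is that the weak extender model hypothesis hands you ultrafilters at \emph{every} \(\lambda'\geq\kappa\), and you should take the witness high above \(\lambda\): fix \(\lambda'\geq\max(\lambda,(2^\lambda)^M)\) and let \(\mathcal U\) witness the hypothesis on \(P_\kappa(\lambda')\). Your steps run verbatim with \(\lambda'\) in place of \(\lambda\), yielding \(A\in j(M)\) and \(\mathrm{crit}(k)>\lambda'\). Now, since \(N'=\mathrm{Ult}(M,\mathcal U\cap M)\) is closed under \(\lambda'\)-sequences in \(M\) and \((2^\lambda)^M\leq\lambda'\), any bijection in \(M\) between \((2^\lambda)^M\) and \(P(\lambda)^M\) belongs to \(N'\); hence \(P(\lambda)^{N'}=P(\lambda)^M\) and \((2^\lambda)^{N'}=(2^\lambda)^M\leq\lambda'<\mathrm{crit}(k)\). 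Therefore \(k(P(\lambda)^{N'})=k[P(\lambda)^{N'}]=P(\lambda)^{N'}\), while by elementarity \(k(P(\lambda)^{N'})=P(\lambda)^{j(M)}\); so \(P(\lambda)^{j(M)}=P(\lambda)^M\) and \(A\in j(M)\) gives \(A\in M\). With this single change your outline closes up into a complete proof; without it, the ``crux'' is not bookkeeping but precisely where the argument is incomplete.
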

Woodin asked whether the converse holds: if \(\kappa\) is supercompact,
must every \(\kappa\)-pseudo\-ground be a weak extender model for the 
supercompactness of \(\kappa\)?
The author found the following counterexample, based on Magidor's identity crisis \cite{Magidor}
as treated by Mitchell \cite{MitchellSkies}:
\begin{thm}\label{WEMCounterexample}
Suppose \(\kappa\) is strongly compact.
Then there is a \(\kappa\)-pseudoground in which
\(\kappa\) is the least measurable cardinal.
\end{thm}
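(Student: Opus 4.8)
The plan is to produce the model by destroying the measurability of every cardinal below \(\kappa\) in the style of Magidor's identity crisis, and to certify that it is a \(\kappa\)-pseudoground by checking the intrinsic conditions of \cref{CKappa} rather than by exhibiting it as a set-forcing ground of \(V\). This splits the work into a construction and a certification, with essentially all of the difficulty falling on the construction.

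For the certification I would verify clause (3) of \cref{CKappa} for the resulting model \(M\). The cofinality clause — that every regular cardinal of \(M\) above \(\kappa\) has \(V\)-cofinality at least \(\kappa\) — should reduce to the statement that \(M\) and \(V\) agree about cardinals and cofinalities on the interval \([\kappa,\infty)\), which I would extract from the chain condition and closure of the underlying iteration. The ultrafilter clause — that every \(\kappa\)-complete ultrafilter over a set in \(M\) extends an ultrafilter of \(M\) — is where the strong compactness of \(\kappa\) enters: given such an ultrafilter \(Z\) over \(Y \in M\), its \(\kappa\)-completeness makes \(Z \cap P^M(Y)\) a \(\kappa\)-complete \(M\)-ultrafilter, and the amenability analysis of \cref{SuffCompleteSection}, in the spirit of \cref{AmenableThm}, forces this trace back into \(M\).

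The construction itself follows Mitchell's rendering of Magidor's argument: the destruction of measurability is driven by a reverse Easton iteration \(\mathbb{P} = \langle (\mathbb{P}_\gamma,\dot{\mathbb{Q}}_\gamma) : \gamma < \kappa\rangle\) in which, at each inaccessible \(\gamma < \kappa\), \(\dot{\mathbb{Q}}_\gamma\) shoots a non-reflecting stationary subset into \(\gamma\); since the tail is \(\gamma\)-closed this set is never reflected away, so afterwards no \(\gamma < \kappa\) remains weakly compact, let alone measurable. Preservation of the strong compactness of \(\kappa\) is the technical heart and proceeds by lifting: for each \(\lambda \geq \kappa\) one begins with \(j : V \to N\) of critical point \(\kappa\) witnessing \(\lambda\)-strong compactness, factors \(j(\mathbb{P})\) as \(\mathbb{P} \ast \mathbb{P}_{\mathrm{tail}}\) with \(\mathbb{P}_{\mathrm{tail}}\) highly closed in \(N\), and uses a master condition together with this closure to build the lift; the derived fine ultrafilter on \(P_\kappa(\lambda)\) then witnesses that \(\kappa\) is still \(\lambda\)-strongly compact. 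In particular \(\kappa\) stays measurable, so it is the least measurable cardinal of the model.

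The main obstacle, and the point on which I would spend the most care, is that \cref{WEMCounterexample} demands an inner model of \(V\), whereas the naive identity-crisis construction destroys measurability by forcing outward, producing an extension rather than a pseudoground. The resolution is to realize the identity-crisis configuration as a definable inner model — exploiting that, by \cref{HamkinsUniqueness}, a pseudoground is recoverable from its restriction to \(H(\kappa^+)\) — and to check that the combinatorics of the non-reflecting sets can be carried out on this inner model while \(\kappa\) retains a strongly compact embedding. Everything downstream of having such a model in hand is then routine: clause (3) of \cref{CKappa} certifies it as a \(\kappa\)-pseudoground, and the failure of supercompactness of \(\kappa\) inside it shows it is not a weak extender model, giving the desired counterexample to the converse of \cref{WoodinUsuba}.
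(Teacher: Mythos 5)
There is a genuine gap, and it sits exactly where you located it. Your construction forces outward: the reverse Easton iteration of non-reflecting stationary sets produces an outer model \(V[G]\supseteq V\), whereas \cref{WEMCounterexample} demands an inner model \(M\subseteq V\) that has the \(\kappa\)-approximation and cover properties \emph{in \(V\)}. What the forcing argument actually yields is the mirror image of the theorem: it shows that \(V\) is a pseudoground of a model in which \(\kappa\) is the least measurable, not that \(V\) has a pseudoground with this property. Your proposed repair --- to ``realize the identity-crisis configuration as a definable inner model'' via \cref{HamkinsUniqueness} --- has no content: the Laver--Hamkins theorem is a uniqueness and definability statement about pseudogrounds that already exist (it recovers \(M\) from \(M\cap H(\kappa^+)\)), and it provides no mechanism for producing an inner model of \(V\) that contains, at each formerly measurable \(\gamma<\kappa\), a set it believes is a non-reflecting stationary subset of \(\gamma\). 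Since every step of your certification (chain condition, closure, master conditions, lifting) refers to the forcing iteration, nothing in the proposal survives once the object it is supposed to certify is not actually constructed.

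The paper's proof goes inward instead of outward, following Mitchell's rendering \cite{MitchellSkies} of Magidor's identity crisis \cite{Magidor}. One fixes, for each measurable \(\delta<\kappa\), a normal ultrafilter \(U_\delta\) on \(\delta\) with \(\textnormal{Meas}\cap\delta\notin U_\delta\), and builds an iterated ultrapower of \(V\) of length \(\kappa\), at each stage applying the image measure on the least measurable cardinal of the current model having only finitely many preimages. The limit \(M_\kappa\) is an inner model, the set \(G\) of critical points belongs to \(V\), and Mitchell's theorem is precisely that \(\kappa\) is the least measurable cardinal of \(M_\kappa[G]\subseteq V\). The pseudoground certification is then carried out roughly as you envisage, but for this model: the \(\kappa\)-cover property follows as in \cref{UltraCov}, and the \(\kappa\)-approximation property follows from \cref{AppxEquiv} once one checks Mitchell's identity \(i\restriction M_\kappa=j_{\kappa,i(\kappa)}^N\circ j_{0\kappa}(i)\) for every embedding \(i:V\to N\) with \(\textnormal{crit}(i)\geq\kappa\) and \(N^\kappa\subseteq N\); this makes \(i\) amenable to \(M_\kappa[G]\), hence forces every \(\kappa\)-complete ultrafilter over a set in \(M_\kappa[G]\) to restrict into it. So your instinct to certify via an intrinsic criterion is sound (you propose \cref{CKappa}(3); the paper uses \cref{AppxEquiv}, on which \cref{CKappa} itself rests), but the real content of the theorem is the construction of the inner model, and for that the tool is the iterated ultrapower, not forcing.
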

We sketch the proof after \cref{AppxEquiv}.

This raises a natural question in the context of strongly compact cardinals.
Although \cref{WEMCounterexample} shows that a supercompact cardinal
need not be supercompact in a \(\kappa\)-pseudoground, a theorem of Hamkins \cite{HamkinsApprox}
shows that there is no corresponding counterexample for strong compactness.
Therefore one might hope that by considering
weak extender models for strong compactness, one might obtain a theorem like 
\cref{WoodinUsuba} that is an equivalence.
\begin{defn}
An inner model \(M\) is a {\it weak extender model for the strong compactness of \(\kappa\)}
if for every \(\lambda \geq \kappa\), there is a \(\kappa\)-complete fine ultrafilter \(\mathcal U\)
over \(P_\kappa(\lambda)\) such that \(P_\kappa(\lambda)\cap M\in \mathcal U\) and
\(\mathcal U\cap M\in M\).
\end{defn}

The question becomes whether every weak extender model for the strong compactness of \(\kappa\)
is a \(\kappa\)-pseudoground. The answer is again no:
\begin{prp}\label{SCWEM}
Suppose \(\kappa\) is strongly compact and \(M\) is an inner model.
Then \(M\) is a weak extender model for the strong compactness of \(\kappa\) if and only if
\(\kappa\) is strongly compact in \(M\) and \(M\) has the \(\kappa\)-cover property.
\begin{proof}
We begin with the forwards direction. 
To see that \(\kappa\) is strongly compact in \(M\), 
just note that if \(\mathcal U\) is a fine ultrafilter on \(P_\kappa(\lambda)\)
such that \(P_\kappa(\lambda)\cap M\in \mathcal U\) and \(\mathcal U\cap M\in M\),
then \(\mathcal U\cap M\) is a fine ultrafilter in \(M\).
To see that \(M\) has the \(\kappa\)-cover property, 
fix a cardinal \(\lambda\) and a \(\kappa\)-complete fine ultrafilter \(\mathcal U\)
on \(\lambda\) such that \(P_\kappa(\lambda)\cap M\in \mathcal U\) and 
\(\mathcal U\cap M\in M\). We will show that every \(\sigma\in P_\kappa(\lambda)\)
is contained in some \(\tau\in P_\kappa(\lambda)\cap M\). 
For any \(\sigma\in P_\kappa(\lambda)\)
\(\{\tau \in P_\kappa(\lambda) : \tau\subseteq \sigma\}\in \mathcal U\)
by fineness and \(\kappa\)-completeness. Therefore 
\(\{\tau \in P_\kappa(\lambda) : \tau\subseteq \sigma\}\cap M\in M\) since
\(P_\kappa(\lambda)\cap M\in \mathcal U\).
In other words, there is some \(\tau\in P_\kappa(\lambda)\cap M\) such that
\(\sigma\subseteq \tau\).

For the converse, fix \(\lambda \geq \kappa\).
Let \(\mathcal W\) be a \(\kappa\)-complete fine ultrafilter on \(P_\kappa(\lambda)\)
that belongs to \(M\). Then by the \(\kappa\)-cover property, \(\mathcal W\)
generates a \(\kappa\)-complete filter \(F\) in \(V\). Since \(\kappa\) is strongly compact,
\(F\) extends to a \(\kappa\)-complete ultrafilter \(\mathcal U\).
Now \(\mathcal U\) is a fine ultrafilter, \(P_\kappa(\lambda)\cap M\in \mathcal W\subseteq \mathcal U\),
and \(\mathcal U\cap M = \mathcal W\in M\). Since
\(\lambda\geq \kappa\) was arbitrary, this shows that \(M\) is a weak extender model
for the strong compactness of \(\kappa\).
\end{proof}
\end{prp}

\begin{cor}
Suppose \(\kappa\) is strongly compact and \(U\) is a \(\kappa^+\)-complete ultrafilter.
Then \(M_U\) is a weak extender model for the strong compactness of \(\kappa\),
but \(M_U\) does not have the \(\kappa\)-approximation property.
\begin{proof}
It is clear that \(M_U\) is a weak extender model for the strong compactness
of \(\kappa\) by \cref{SCWEM} since \(M_U\) is closed under \(\kappa\)-sequences. 
On the other hand, \(M_U\) does not have the 
\(\kappa\)-approximation property by the Laver-Hamkins uniqueness theorem
\cref{HamkinsUniqueness}, since \(H(\kappa^+)\cap M_U = H(\kappa^+)\) yet \(M_U\neq V\).
\end{proof}
\end{cor}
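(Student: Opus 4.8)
The plan is to handle the two assertions separately, reducing the first to \cref{SCWEM} and the second to the Laver--Hamkins uniqueness theorem \cref{HamkinsUniqueness}. The single structural fact driving both halves is that a \(\kappa^+\)-complete ultrafilter \(U\) has \(\textnormal{crit}(j_U)\geq \kappa^+\), whence \(j_U(\kappa) = \kappa\) and \(M_U\) is closed under \(\kappa\)-sequences.

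For the first assertion, I would invoke \cref{SCWEM}, so that it suffices to check that \(\kappa\) is strongly compact in \(M_U\) and that \(M_U\) has the \(\kappa\)-cover property. Closure under \(\kappa\)-sequences gives the cover property immediately, since any \(\sigma\subseteq M_U\) with \(|\sigma| < \kappa\) is itself an element of \(M_U\) and may be taken as its own cover. For strong compactness inside \(M_U\), I would appeal to elementarity: since \(\kappa\) is strongly compact in \(V\) and \(j_U(\kappa) = \kappa\), the cardinal \(\kappa\) is strongly compact in \(M_U\). With both hypotheses of \cref{SCWEM} verified, \(M_U\) is a weak extender model for the strong compactness of \(\kappa\).

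For the second assertion, I would argue by contradiction, and this is the step carrying the real content. Suppose \(M_U\) had the \(\kappa\)-approximation property. Since \(M_U\) already enjoys the \(\kappa\)-cover property, it would then be a \(\kappa\)-pseudoground of \(V\). The crucial observation is that \(V\) is trivially a \(\kappa\)-pseudoground of itself, and that \(M_U\) and \(V\) agree on \(H(\kappa^+)\): closure of \(M_U\) under \(\kappa\)-sequences gives \(H(\kappa^+)\subseteq M_U\) by induction on rank, so \(H(\kappa^+)^{M_U} = H(\kappa^+) = H(\kappa^+)^V\). Applying \cref{HamkinsUniqueness} with \(V\) as the comparison pseudoground, the uniqueness of the \(\kappa\)-pseudoground agreeing on \(H(\kappa^{+})\) forces \(M_U = V\). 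But \(M_U \neq V\) because \(U\) is nonprincipal and hence \(j_U\) is nontrivial, a contradiction.

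I expect the second paragraph of the argument to be the main obstacle, in the sense that the failure of the \(\kappa\)-approximation property is never shown directly; instead it is extracted from the uniqueness theorem by recognizing \(V\) itself as a second \(\kappa\)-pseudoground sharing the same \(H(\kappa^+)\). The remaining points---the computation of the critical point, the closure of \(M_U\) under \(\kappa\)-sequences, and the transfer of strong compactness by elementarity---are routine consequences of the \(\kappa^+\)-completeness of \(U\).
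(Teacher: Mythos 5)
Your proof is correct and follows essentially the same route as the paper's: the first assertion via \cref{SCWEM} using \(\kappa\)-closure of \(M_U\) (cover property plus strong compactness in \(M_U\) by elementarity, as \(j_U(\kappa)=\kappa\)), and the second by noting that \(M_U\) would otherwise be a \(\kappa\)-pseudoground agreeing with \(V\) on \(H(\kappa^+)\), contradicting the Laver--Hamkins uniqueness theorem since \(M_U \neq V\). You have merely made explicit the details the paper leaves implicit.
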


Despite \cref{SCWEM}, we will show that there is a variant 
of the notion of a weak extender model for strong compactness 
that coincides with the property of being a pseudoground.

\begin{thm}\label{AppxEquiv} Suppose \(\kappa\) is strongly compact and \(M\) is
    a model of set theory with the \(\kappa\)-cover property. Then the following
    are equivalent:
    \begin{enumerate}[(1)]
        \item \(M\) has the \(\kappa\)-approximation property.
        \item Every \(\kappa\)-complete ultrafilter over a set in \(M\)
            extends an ultrafilter of \(M\).
    \end{enumerate}
\end{thm}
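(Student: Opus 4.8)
The plan is to prove the two implications separately, after recording a reformulation of~(2). If \(U\) is a \(\kappa\)-complete ultrafilter over \(X\in M\) and \(W\) is an ultrafilter of \(M\) with \(W\subseteq U\), then \(W=U\cap P^M(X)\) by the maximality of \(W\) among \(M\)-filters; conversely \(U\cap P^M(X)\) is always an \(M\)-ultrafilter. Thus~(2) simply asserts that the trace \(U\cap P^M(X)\) lies in \(M\) for every \(\kappa\)-complete \(U\) over a set of \(M\).

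For \((1)\Rightarrow(2)\), fix such a \(U\) and put \(W=U\cap P^M(X)\subseteq M\). By the \(\kappa\)-approximation property it is enough to check that \(W\) is \(\kappa\)-approximated by \(M\), i.e.\ that \(W\cap\sigma\in M\) whenever \(\sigma\in M\) and \(|\sigma|<\kappa\); we may assume \(\sigma\subseteq P^M(X)\). Here I would exploit that a strongly compact cardinal is inaccessible, hence a strong limit: inside \(M\) the Boolean algebra generated by \(\sigma\) partitions \(X\) into a set \(P\in M\) of at most \(2^{|\sigma|}<\kappa\) atoms, and the \(\kappa\)-completeness of \(U\) selects a unique atom \(a^*\in P\cap U\). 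Since \(a^*\) is an atom, \(W\cap\sigma=\{B\in\sigma:a^*\subseteq B\}\), which is definable over \(M\) from \(\sigma\) and \(a^*\); hence \(W\cap\sigma\in M\). The approximation property now yields \(W\in M\).

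For \((2)\Rightarrow(1)\), let \(A\) be \(\kappa\)-approximated by \(M\); by the usual coding I would reduce to the case that \(A\subseteq\lambda\) is a set of ordinals and aim to show \(A\in M\). The tempting approach---forming \(j_U\) for a fine \(\kappa\)-complete \(U\), noting \(W=U\cap M\in M\) by~(2), and trying to reflect \(A\) through the factor embedding \(M_W^M\to M_U\)---is circular: capturing the seed \(j_U(A)\cap[\text{id}]_U\) in the range of that embedding is literally equivalent to \(A\in M\). The idea that breaks the circle is to encode \(A\) into the ultrafilter itself. Let \(Z=\{(\tau,a):\tau\in(P_\kappa(\lambda))^M \text{ and } a\in P^M(\tau)\}\), a set in \(M\). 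Using strong compactness and the \(\kappa\)-cover property, fix a fine \(\kappa\)-complete ultrafilter \(\mathcal U_0\) on \((P_\kappa(\lambda))^M\), obtained by extending (via \cref{KetonenThm}) the \(\kappa\)-complete fine filter generated by the sets \(\{\tau\in(P_\kappa(\lambda))^M:\sigma\subseteq\tau\}\), which are nonempty precisely by the cover property. Now let \(U\) be the pushforward of \(\mathcal U_0\) under \(\tau\mapsto(\tau,A\cap\tau)\); this map is well defined because \(A\cap\tau\in M\) by \(\kappa\)-approximation, so \(U\) is a \(\kappa\)-complete ultrafilter over \(Z\in M\).

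By~(2) the trace \(W=U\cap P^M(Z)\) belongs to \(M\), and the point is that \(W\) already determines \(A\). For each \(\alpha<\lambda\) the set \(B_\alpha=\{(\tau,a)\in Z:\alpha\in a\}\) lies in \(M\), and unwinding the definition of the pushforward (\cref{PushDef}) together with the fineness of \(\mathcal U_0\) gives \(B_\alpha\in W\) if and only if \(\alpha\in A\). Therefore \(A=\{\alpha<\lambda:B_\alpha\in W\}\) is definable over \(M\) from the parameter \(W\), so \(A\in M\). The steps I expect to require the most care are arranging \(\mathcal U_0\) to be fine and to concentrate on \((P_\kappa(\lambda))^M\)---this is exactly where the cover hypothesis is used---and verifying the pushforward computation; the genuine conceptual hurdle, recovering \(A\) from the bare trace-membership assumption~(2), is dissolved by the encoding, which converts the approximation property of \(A\) into fineness of \(U\).
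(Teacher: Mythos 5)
Your proof is correct. The first implication is essentially the paper's own (Hamkins') argument: where you select the unique atom \(a^*\in M\) of the Boolean algebra generated by \(\sigma\) lying in \(U\), the paper closes \(\sigma\) under relative complements and picks a point \(x\in\bigcap(U\cap\sigma)\); these are the same selector trick (the paper's variant does not even need \(\kappa\) to be a strong limit). The second implication is where you genuinely depart. The paper argues through elementary embeddings: it fixes \(j:V\to N\) witnessing strong compactness so that \(j[X]\) is covered by some \(C\in j(M)\) of \(N\)-cardinality below \(j(\kappa)\), observes that hypothesis (2) makes \(j\) close to \(M\), invokes \cref{CloseLemma}, and pulls back \(A=j^{-1}[j(A)\cap C]\in M\), using elementarity to see \(j(A)\cap C\in j(M)\). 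Your encoding argument is at heart the same mechanism rendered without embeddings: your \(U\) is precisely the ultrafilter over \(Z\) derived from \(j_{\mathcal U_0}\) using the pair \(([\text{id}]_{\mathcal U_0},[\tau\mapsto A\cap\tau]_{\mathcal U_0})\), and your decoding identity \(A=\{\alpha<\lambda: B_\alpha\in W\}\) is \(A=j^{-1}[j(A)\cap C]\) in disguise. Still, the repackaging has real advantages: it invokes hypothesis (2) only once, for a single concretely built ultrafilter; it needs no closeness lemma; and it never applies \(j\) to the class \(M\) (the paper's step ``\(j(A)\) is \(j(\kappa)\)-approximated by \(j(M)\)'' requires elementarity for statements with \(M\) as a class parameter, a subtlety your argument sidesteps entirely). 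What the paper's route buys in exchange is reusable machinery: the fact that (2) makes every such embedding close to \(M\) is exactly what gets recycled later, e.g.\ in the proof of \cref{CKappa}. One point you should make explicit: the \(\kappa\)-cover property as defined produces \(\tau\in M\) with \(|\tau|<\kappa\) computed in \(V\), while your generators need \(\tau\in(P_\kappa(\lambda))^M\), i.e.\ \(|\tau|^M<\kappa\); this is harmless because \(|\tau|^M\) is an ordinal whose \(V\)-cardinality is below the \(V\)-cardinal \(\kappa\), hence \(|\tau|^M<\kappa\), but the two readings of ``small'' should not be conflated silently.
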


For the proof, we need the concept of a {\it close embedding}.
This is a special case of a fine-structural
notion introduced by Mitchell-Steel \cite{MitchellSteel}.
Its utility in the coarse large cardinal setting was
first realized by Woodin \cite{Woodin}.
\begin{defn}
Suppose \(M\) and \(N\) are models of set theory. 
An elementary embedding \(j : M\to N\) is {\it close to \(M\)} if 
\(j\) is cofinal and every \(M\)-ultrafilter derived from \(j\) belongs to \(M\).
\end{defn}
The author noticed that closeness has a very simple
model theoretic characterization that simplifies a number of proofs.
\begin{thm}\label{CloseLemma}
    Suppose \(M\) and \(N\) are models of set theory and 
    \(j : M\to N\) is an elementary embedding. Then the following are equivalent:
    \begin{enumerate}[(1)]
        \item \(j\) is close to \(M\).
        \item For any \(A\in N\), \(j^{-1}[A]\in M\).
    \end{enumerate}
\begin{proof}
    {\it (1) implies (2):} Fix \(A\in N\). Since \(j\) is cofinal, 
    there is some \(X\in M\) such that \(N\) satisfies
    \(A\in j(X)\), and let \(U\) be the \(M\)-ultrafilter over \(X\) derived from \(j\) using
    \(A\). Let \(i : M\to P\) be the ultrapower embedding associated with \(U\)
    and let \(k : P\to N\) be the unique factor embedding such that \(k\circ i = j\)
    and \(k([\text{id}]_U = A\). Let \(\bar A = [\text{id}]_U\).
    Then since \(i\) is definable over \(M\), \(i^{-1}[\bar A]\in M\).
    But \(i^{-1}[\bar A] = (k\circ i)^{-1}[i(\bar A)] = j^{-1}[A]\).

    {\it (2) implies (1):} Fix \(X\in M\) and \(a\in N\) such that \(N\)
    satisfies that \(a\in j(X)\). We will show that the \(M\)-ultrafilter
    \(U\) over \(X\) derived from \(j\) using \(a\) belongs to \(N\).
    Indeed, let \(P\) be the principal ultrafilter over \(j(X)\) concentrated at \(a\),
    as computed in \(N\).
    Then \(P\in N\) and \(U = j^{-1}[P]\).

    To see that \(j\) is cofinal, fix \(a\in N\). Let \(\alpha\in \text{Ord}^N\) be 
    least such that \(N\) satisfies \(a\in V_\alpha\). 
    Let \(\bar \alpha = j^{-1}[\alpha]\). By (2), \(\bar\alpha\in M\),
    and so \(\bar \alpha\) is an ordinal of \(M\).
    But \(\alpha \leq j(\bar \alpha)\): otherwise 
    \(N\) satisfies that \(\bar \alpha\in j^{-1}[\alpha] = \bar \alpha\),
    which is impossible. Therefore
    \(N\) satisfies that \(a\in j(X)\) where \(X = (V_{\bar \alpha})^M\).
\end{proof}
\end{thm}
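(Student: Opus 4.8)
The plan is to prove the two implications separately. The forward direction rests on factoring $j$ through the ultrapower by a derived ultrafilter; the reverse direction splits into checking the two defining clauses of closeness (all derived $M$-ultrafilters land in $M$, and $j$ is cofinal), with the cofinality clause being the only nonroutine piece.

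For (1) implies (2), I would fix $A\in N$ and first invoke cofinality (part of closeness) to find $X\in M$ with $N\vDash A\in j(X)$. Let $U$ be the $M$-ultrafilter over $X$ derived from $j$ using $A$; closeness gives $U\in M$. Let $i : M\to P$ be the ultrapower of $M$ by $U$ and let $k : P\to N$ be the canonical factor embedding with $k\circ i = j$ and $k([\mathrm{id}]_U) = A$. The decisive observation is that, because $U\in M$, the embedding $i$ is definable over $M$, so for $\bar A := [\mathrm{id}]_U$ the preimage $i^{-1}[\bar A]$ is a subset of $\bigcup\bigcup X$ definable over $M$ and hence lies in $M$. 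Finally, since $k$ is injective with $k(\bar A)=A$ and $k\circ i = j$, one has $x\in j^{-1}[A]$ iff $k(i(x))\in A=k(\bar A)$ iff $i(x)\in\bar A$, so $j^{-1}[A]=i^{-1}[\bar A]\in M$.

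For (2) implies (1), the derived-ultrafilter clause is immediate: given $X\in M$ and $a\in j(X)$, let $U$ be the derived $M$-ultrafilter and let $P$ be the principal ultrafilter over $j(X)$ concentrated at $a$, computed in $N$. For $B\in P^M(X)$ we have $B\in U$ iff $a\in j(B)$ iff $j(B)\in P$, so $U=j^{-1}[P]$, which lies in $M$ by (2) since $P\in N$. For cofinality, fix $a\in N$, let $\alpha$ be the least $N$-ordinal with $a\in V_\alpha^N$, and set $\bar\alpha=j^{-1}[\alpha]$. Clause (2) puts $\bar\alpha\in M$, and since $j$ is order preserving $\bar\alpha$ is downward closed in $\mathrm{Ord}^M$, hence an $M$-ordinal. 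The crux is the inequality $\alpha\leq j(\bar\alpha)$: if instead $j(\bar\alpha)<\alpha$, then $\bar\alpha\in j^{-1}[\alpha]=\bar\alpha$, contradicting foundation. Consequently $a\in V_\alpha^N\subseteq V_{j(\bar\alpha)}^N=j(V_{\bar\alpha}^M)$, so $X=V_{\bar\alpha}^M\in M$ witnesses cofinality at $a$.

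I expect the conceptual heart, and the step most prone to bookkeeping errors, to be the factorization in (1) implies (2): one must set up the factor embedding $k$ correctly and extract the identity $j^{-1}[A]=i^{-1}[\bar A]$, where it is precisely the membership $U\in M$ that makes the internal ultrapower $i$ definable over $M$ and thereby pins the preimage inside $M$. The cofinality argument in (2) implies (1) is slick but wholly self-contained once one thinks to take $\bar\alpha$ to be the preimage of a rank, and the self-membership contradiction closes it off cleanly.
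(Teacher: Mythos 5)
Your proposal is correct and follows essentially the same route as the paper's own proof: factoring $j$ through the internal ultrapower by the derived ultrafilter for (1)$\Rightarrow$(2), and for (2)$\Rightarrow$(1) reading off derived ultrafilters as preimages of principal ultrafilters and obtaining cofinality from $\bar\alpha = j^{-1}[\alpha]$ with the self-membership contradiction. Your spelled-out justification that $j^{-1}[A] = i^{-1}[\bar A]$ via the injectivity of $k$ is, if anything, cleaner than the paper's corresponding line.
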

Notice that Woodin's lemma that close embeddings are closed under composition
is completely transparent given this characterization.

\begin{proof}[Proof of \cref{AppxEquiv}]
        {\it (1) implies (2):} This implication is due to Hamkins
        \cite{HamkinsApprox} and does not require that \(\kappa\) is
        strongly compact. If \(\kappa\) is a strong limit cardinal, we can use
        \cref{ContinuumComplete} to obtain the stronger theorem
        that every \(M\)-\(\kappa\)-complete \(M\)-ultrafilter belongs to \(M\).
        (This seems to be a new result.) 
        
        Assume \(M\) has the \(\kappa\)-approximation
        property. Let \(U\) be a \(\kappa\)-complete ultrafilter over \(X\in
        M\). We must show that \(U\cap M\in M\). It suffices to show that
        \(U\cap M\) is \(\kappa\)-approximated by \(M\). Suppose
        \(\sigma\subseteq P^M(X)\) such that \(|\sigma| < \kappa\). We want to
        show that \(U\cap \sigma\in M\). Clearly it suffices to prove this in
        the case that \(\sigma\) is closed under relative complements in \(X\).
        By the \(\kappa\)-completeness of \(U\), \(\bigcap (U\cap \sigma)\neq
        \emptyset\), so fix \(x\in \bigcap (U\cap \sigma)\). Then since
        \(\sigma\) is closed under complements, \(U\cap \sigma = \{A\in\sigma :
        x\in A\}.\) It follows that \(U\cap \sigma\in M\).

        {\it (2) implies (1):} Suppose \(X\in M\) and \(A\subseteq X\) is
        \(\kappa\)-approximated by \(M\). Let \(j : V\to N\) be an elementary
        embedding with critical point \(\kappa\) such that \(j[X]\) is contained in
        a set \(B\in N\) with \(|B|^N < j(\kappa)\). By replacing \(B\) with
        \(B\cap j(M)\), we may assume without loss of generality that
        \(B\subseteq j(M)\). Since \(j(M)\) has the \(j(\kappa)\)-cover property
        in \(N\), there is some \(C\in j(M)\) with \(|C|^N < j(\kappa)\) such
        that \(B\subseteq C\). Since \(j(A)\) is \(j(\kappa)\)-approximated by
        \(j(M)\), \(j(A)\cap C\in j(M)\). We have assumed that \(U\cap M\in M\)
        for every \(\kappa\)-complete ultrafilter \(U\) over a set in \(M\);
        therefore for every \(M\)-ultrafilter \(W\)
        derived from \(j\), \(W\in M\). In other words, \(j\) is close to \(M\).
        Therefore by \cref{CloseLemma}, \(j^{-1}[j(A)\cap C] \in M\). But
        \(j^{-1}[j(A)\cap C] = A\), so \(A\in M\), as desired.
\end{proof}

Using \cref{AppxEquiv}, we prove \cref{WEMCounterexample}.
\begin{proof}[Sketch of \cref{WEMCounterexample}]
    Let \(\text{Meas}\) denote the class of measurable cardinals. 
    Choose \(\vec U = \langle U_\delta : \delta \in \text{Meas}\cap \kappa\rangle\) such that 
    \(U_\delta\) is a normal ultrafilter on \(\delta\) with
    \(\text{Meas}\cap \delta\notin U_\delta\).
    We define a sequence of ordinals \(\langle \delta_\alpha : \alpha < \kappa\rangle\)
    and an iterated ultrapower \[\langle M_\alpha, W_\beta, j_{\beta,\alpha} : \beta < \alpha \leq \kappa\rangle\]
    by simultaneous recursion, letting \(\delta_\alpha\) 
    be the least measurable cardinal \(\delta\) of \(M_\alpha\) such that
    the set of preimages \(\{\beta < \alpha : j_{\beta,\alpha}(\delta_\beta) = \delta\}\) of \(\delta\) is finite.
    Let \(W_\alpha = j_{0\alpha}(\vec U)_{\delta_\alpha}\). The rest of the data of the iterated ultrapower is
    uniquely determined by the sequence \(\langle W_\alpha : \alpha < \kappa\rangle\) in the usual way.
    
    For \(\gamma \leq \kappa\), let \(G_\gamma = \{\delta_\alpha : \alpha < \gamma\}.\)
    One can show that \(\kappa\) is the least
    measurable cardinal of \(M_\kappa[G_\kappa]\). The proof appears in
    \cite[Theorem 1.2]{MitchellSkies}. 
    
    It is easy to see that \(M_\kappa\) has the \(\kappa\)-cover property:
    this follows from the fact that \(j_{0\kappa} = \kappa\) and the proof of \cref{UltraCov}.
    Since \(M_\kappa\subseteq M_\kappa[G]\), \(M_\kappa[G]\) has the \(\kappa\)-cover property as well.
    
    To finish we must show that \(M_\kappa[G]\) has the \(\kappa\)-approximation property.
    By \cref{AppxEquiv}, it suffices to show that for any
    elementary embedding \(i : V\to N\) such that 
    \(N^\kappa\subseteq N\) and \(\text{crit}(i) \geq \kappa\),
    \(i\restriction M_\kappa\) is amenable to \(M_\kappa\).
    The proof is due to Mitchell and appears in \cite[Theorem 1.2]{MitchellSkies}.
    We have 
    \[i(j_{0\kappa}) = (j_{0,i(\kappa)})^N = j_{\kappa,i(\kappa)}^N \circ j_{0\kappa}\restriction N\]
    The final equality uses the \(\kappa\)-closure of
    \(N\) to deduce that \((j_{0\kappa})^N = j_{0\kappa}\restriction N\).
    We claim
    \[i\restriction M_\kappa = j_{\kappa,i(\kappa)}^N \circ j_{0\kappa}(i)\]
    We have 
    \[i\circ j_{0\kappa} = i(j_{0\kappa}) \circ i = j_{\kappa,i(\kappa)}^N \circ j_{0\kappa}\circ i
    = j_{\kappa,i(\kappa)}^N \circ j_{0\kappa}(i) \circ j_{0\kappa}\]
    In other words, \(i\) and \(j_{\kappa,i(\kappa)}^N \circ j_{0\kappa}(i)\)
    agree on \(j_{0\kappa}[V]\).
    They also agree on \(\kappa\), since both embeddings are the identity on \(\kappa\).
    Since \(M_\kappa = H^{M_\kappa}(j_{0\kappa}[V]\cup \kappa)\), it follows that 
    \(i = j_{\kappa,i(\kappa)}^N \circ j_{0\kappa}(i)\), as claimed. As a consequence,
    \(i\) is amenable to \(M_\kappa\).
    
    Note that \(i(G) = G\cup H\) where \(H\) is the sequence of indiscernibles
    generated by \(j_{\kappa,i(\kappa)}^N\) in much the same way that \(G\) is generated from
    \(j_{0\kappa}\). Therefore \(i(G) \in M_\kappa[G]\).
    It follows that \(i\) is amenable to \(M_\kappa[G]\): every element of \(M_\kappa[G]\)
    is \(\Sigma_2\)-definable from parameters in 
    \(M_\kappa\cup \{G\}\), and \(i\restriction M_\kappa\cup \{G\}\) 
    is amenable to \(M_\kappa[G]\).

    In particular, it follows that for every \(\kappa\)-complete ultrafilter
    \(U\) over a set \(X\) in \(M\), \(j_U\restriction M\) is amenable to \(M\),
    and therefore \(U\cap M\in M\), since \(U\cap M\) is the \(M\)-ultrafilter
    over \(X\) derived from \(j_U\restriction M\) using \([\text{id}]_U\).
    Applying \cref{AppxEquiv}, it follows that \(M\) has the \(\kappa\)-approximation property.
    \end{proof}
Combining \cref{AmenableThm}, \cref{UltraCov}, and \cref{AppxEquiv} immediately
yields that small ultrapowers are pseudogrounds:
\begin{cor}\label{UltraAppx} Suppose \(\kappa\) is a strongly compact cardinal
and \(D\in V_\kappa\) is an ultrafilter. Then \(M_D\) has the
\(\kappa\)-approximation property.\qed
\end{cor}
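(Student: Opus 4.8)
The plan is to recognize this as a direct assembly of the three preceding results, with the only real content being the verification of hypothesis (2) of \cref{AppxEquiv}. First I would observe that a strongly compact cardinal is measurable, hence inaccessible, hence a strong limit cardinal; this is what licenses the strong-limit hypotheses of \cref{UltraCov} and \cref{AmenableThm}. Since \(D\in V_\kappa\) and \(\kappa\) is inaccessible, \(D\) is an ultrafilter over a set of size less than \(\kappa\), so \cref{UltraCov} immediately gives that \(M_D\) has the \(\kappa\)-cover property. With this in hand, \cref{AppxEquiv}---whose cover hypothesis is now met and whose strong compactness hypothesis is given---reduces the goal to showing that every \(\kappa\)-complete ultrafilter over a set in \(M_D\) extends an ultrafilter of \(M_D\).

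So I would fix a \(\kappa\)-complete ultrafilter \(U\) over a set \(X\in M_D\) and set \(W = U\cap P^{M_D}(X)\). Because \(U\) is a \(V\)-ultrafilter and \(X\in M_D\), exactly one of \(A\) and \(X\setminus A\) lies in \(W\) for each \(A\in P^{M_D}(X)\), so \(W\) is an \(M_D\)-ultrafilter over \(X\) that \(U\) extends; hence it suffices to prove \(W\in M_D\). To apply \cref{AmenableThm} with its strong-limit cardinal taken to be \(\kappa\), I first check that \(W\) is \(M_D\)-\(\kappa\)-complete: given \(\sigma\in M_D\) with \(\sigma\subseteq W\) and \(|\sigma| < \kappa\), the \(\kappa\)-completeness of \(U\) forces \(\bigcap\sigma\in U\), and since \(M_D\) is a model of ZFC we also have \(\bigcap\sigma\in M_D\), whence \(\bigcap\sigma\in W\). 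Next I choose the parameter ``\(\delta\)'' of \cref{AmenableThm} to be a cardinal below \(\kappa\) with the property that the underlying set of \(D\) has size less than \(\delta\); such a \(\delta\) exists because \(\kappa\) is a limit cardinal and \(D\) has domain of size less than \(\kappa\). The final hypothesis of \cref{AmenableThm}, that every \(\kappa\)-complete filter over \(X\) extends to a \(\delta\)-complete ultrafilter, is immediate from the strong compactness of \(\kappa\): such a filter extends to a \(\kappa\)-complete ultrafilter, which is a fortiori \(\delta\)-complete. Thus \cref{AmenableThm} yields \(W\in M_D\), completing the verification of hypothesis (2).

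Finally, applying \cref{AppxEquiv} gives that \(M_D\) has the \(\kappa\)-approximation property. I do not expect a genuine obstacle here; the entire argument is bookkeeping. The one point demanding care is the parameter matching in \cref{AmenableThm}, whose statement carries two cardinals \(\delta\leq\kappa\): one must identify its strong-limit cardinal with our strongly compact \(\kappa\) and select \(\delta\) to bound the domain of \(D\), and then confirm that the \(M_D\)-\(\kappa\)-completeness of \(W\) is inherited from the ambient \(\kappa\)-completeness of \(U\).
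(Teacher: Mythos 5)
Your proposal is correct and takes essentially the same approach as the paper: the paper's proof is precisely the ``immediate'' combination of \cref{UltraCov} (for the \(\kappa\)-cover property), \cref{AmenableThm} (to place \(U\cap M_D\) inside \(M_D\), verifying hypothesis (2) of \cref{AppxEquiv}), and \cref{AppxEquiv} itself. The bookkeeping you make explicit---that strong compactness gives a strong limit, that \(\delta\) can be chosen below \(\kappa\) to bound the domain of \(D\), and that \(M_D\)-\(\kappa\)-completeness of the restricted ultrafilter is inherited from the \(\kappa\)-completeness of \(U\)---is exactly what the paper leaves implicit.
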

In fact, by generalizing the Commuting Ultrapowers Lemma (\cref{CompleteCommute})
to work in the case where one embedding is an external extender,
one can prove a much stronger result:
\begin{cor}
    Suppose \(\kappa\) is a strongly compact cardinal.
    Suppose \(N\) is \(\kappa\)-pseudoground and 
    \(E\) is an \(N\)-extender in \(V_\kappa\).
    Then \(M_E^N\) is a \(\kappa\)-pseudoground.\qed
\end{cor}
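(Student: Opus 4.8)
The plan is to verify the two hypotheses of \cref{AppxEquiv} for the model $M := M_E^N$ and then read off the conclusion. First note that $M$ is an inner model of ZFC by standard arguments: $N\vDash\textnormal{ZFC}$, the extender ultrapower preserves ZFC, and $j_E^N$ moves ordinals cofinally so $\textnormal{Ord}\subseteq M$. By \cref{AppxEquiv} it then suffices to establish (i) that $M$ has the $\kappa$-cover property and (ii) that every $\kappa$-complete ultrafilter over a set in $M$ extends an ultrafilter of $M$; together these yield the $\kappa$-approximation property and hence that $M$ is a $\kappa$-pseudoground.

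For (i) I would repeat the proof of \cref{UltraCov}, now relative to $N$. Writing $\nu<\kappa$ for the length of $E$, fix $\sigma\subseteq M$ with $|\sigma|<\kappa$. Each element of $M$ has the form $j_E^N(f)(b)$ with $f\in N$ and $b\in[\nu]^{<\omega}$, so choose representatives $\sigma=\{j_E^N(f_\alpha)(b_\alpha):\alpha<\delta\}$ with $\delta<\kappa$ and set $S=\{f_\alpha:\alpha<\delta\}\subseteq N$. Since $N$ is a $\kappa$-pseudoground it has the $\kappa$-cover property, so $S\subseteq T$ for some $T\in N$ with $|T|^N=\iota<\kappa$. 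Then $\tau=\{F(b):F\in j_E^N(T),\ b\in[\nu]^{<\omega}\}$ lies in $M$ and covers $\sigma$. Exactly as in \cref{UltraCov}, the point is that although $|j_E^N(T)|^M$ may be as large as the $M$-ordinal $j_E^N(\iota)$, its $V$-cardinality is bounded by the number of $N$-functions representing ordinals below $j_E^N(\iota)$, which is at most $\iota^{\theta}$ for $\theta=|[\textnormal{crit}(E)]^{<\omega}|<\kappa$, hence $<\kappa$ because $\kappa$ is a strong limit (strongly compact cardinals are strong limits). Thus $|\tau|<\kappa$, giving the cover property.

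For (ii), fix a $\kappa$-complete ultrafilter $U$ of $V$ over a set $X\in M$. Since $U$ is $\kappa$-complete, $\textnormal{crit}(j_U)\geq\kappa$, and as $E\in V_\kappa$ we get $j_U(E)=E$ and $j_U(M)=M_E^{j_U(N)}$, together with the commuting square $j_U\circ j_E^N=j_E^{j_U(N)}\circ(j_U\restriction N)$. The first key observation is that $i:=j_U\restriction N:N\to j_U(N)$ is close to $N$: every $N$-ultrafilter derived from $i$ is $N$-$\kappa$-complete because $\textnormal{crit}(j_U)\geq\kappa$ (intersections of fewer than $\kappa$ derived sets are preserved since $j_U$ fixes short sequences), and therefore belongs to $N$ by the first clause of \cref{HamkinsUniversality} applied to the $\kappa$-pseudoground $N$; cofinality of $i$ is immediate. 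The second, and main, step is to propagate this closeness through the external extender ultrapower: I claim $j_U\restriction M:M\to M_E^{j_U(N)}$ is close to $M$. Granting this, $U\cap M$ is precisely the $M$-ultrafilter over $X$ derived from $j_U\restriction M$ using $[\textnormal{id}]_U\in j_U(X)$, so by \cref{CloseLemma} it belongs to $M$, which is (ii).

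The main obstacle is this propagation claim, which is the promised generalization of the Commuting Ultrapowers Lemma (\cref{CompleteCommute}) to the case where one side is an external extender. By \cref{CloseLemma} it amounts to showing that for every $A\in M_E^{j_U(N)}$ the preimage $\{z\in M:j_U(z)\in A\}$ lies in $M$. Writing $z=j_E^N(f)(b)$, the commuting square gives $j_U(z)=j_E^{j_U(N)}(i(f))(b)$, so the preimage depends on $f$ only through $i(f)$, and the obstruction is that a representing function $H\in j_U(N)$ for $A$ need not lie in $N$. Closeness of $i$ is exactly what lets one pull the relevant instances of $H$ back into $N$ up to $E$-measure one, while the smallness of $E$ (only $\nu<\kappa$ generators, and $E$ literally fixed by $j_U$ so its measures act identically as $N$- and as $j_U(N)$-ultrafilters) ensures these finitely supported pullbacks assemble into a single function of $N$ witnessing that the preimage lies in $M$. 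As a consistency check, when $N=V$ the embedding $i=j_U$ is trivially close to $V$ and $E=D$ is a single ultrafilter, and the argument collapses to the application of \cref{CompleteCommute} used in the proof of \cref{AmenableThm}.
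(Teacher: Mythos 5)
Your overall strategy is the right one, and it matches the skeleton the paper has in mind: the paper states this corollary with no proof beyond the remark that one should generalize the Commuting Ultrapowers Lemma (\cref{CompleteCommute}) to the case of an external extender, and then combine it with the ingredients of \cref{UltraAppx}, i.e., a relativized \cref{UltraCov} plus \cref{AppxEquiv}. Your part (i) is exactly that relativization, and your cardinality bookkeeping is fine: since the \(\kappa\)-cover property gives \(|T|<\kappa\) in \(V\), the ordinal \(|T|^N\) also lies below the inaccessible \(\kappa\), and the number of \(N\)-functions from \([\mathrm{crit}(E)]^{<\omega}\) into \(T\), counted in \(V\), is below \(\kappa\) by the strong limit property. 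Where you genuinely depart from the paper's hint is the mechanism for part (ii): instead of proving a Blass/flip-style commuting statement for \(j_U\) and \(j_E^N\) (the analogue of \cref{CommuteEquiv}), you propagate closeness — \(j_U\restriction N\) is close to \(N\) via \cref{HamkinsUniversality} (or the Hamkins direction of \cref{AppxEquiv}), and then closeness is lifted through the extender ultrapower. This is a legitimate and arguably cleaner realization of what the paper leaves unproved: it reuses \cref{CloseLemma}, which the paper itself advertises as the simplifying tool, and it collapses to the \cref{AmenableThm} argument when \(N=V\) and \(E\) is a single ultrafilter, exactly as you observe.

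The one step you state too loosely is the ``assembly.'' Closeness of \(i=j_U\restriction N\) applied pointwise gives \(i^{-1}[H(v)]\in N\) for each \(v\in[\mathrm{crit}(E)]^{|c|}\), but membership of each value in \(N\) does not by itself put the function \(v\mapsto i^{-1}[H(v)]\) inside \(N\): this function is defined from the external embedding \(j_U\), and in general a \(V\)-function all of whose values lie in \(N\) need not belong to \(N\). Neither the smallness of \(E\) nor finite support repairs this inference. The correct move is a single application of \cref{CloseLemma} to the bundled relation: let \(\tilde A=\{(v,y) : v\in[\mathrm{crit}(E)]^{|c|},\ y\in H(v)\}\), an element of \(j_U(N)\) definable there from \(H\). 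Then \(i^{-1}[\tilde A]\in N\), and since \(j_U\) fixes every \(v\) (as \(\mathrm{crit}(j_U)\geq\kappa\) and \(E\in V_\kappa\)), the section function \(G(v)=\{y : (v,y)\in i^{-1}[\tilde A]\}=i^{-1}[H(v)]\) belongs to \(N\). With \(G\in N\) in hand your computation closes: \(i(f)(u_b)\in H(u_c)\) if and only if \(f(u_b)\in G(u_c)\), these sets lie in \(P^N([\mathrm{crit}(E)]^{|b\cup c|})=P^{j_U(N)}([\mathrm{crit}(E)]^{|b\cup c|})\) (both fixed by \(j_U\)) and are measured identically by \(E_{b\cup c}\), so \L o\'s's Theorem gives \((j_U\restriction M)^{-1}[A]=j_E^N(G)(c)\in M\). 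That yields closeness of \(j_U\restriction M\), hence \(U\cap M\in M\), and \cref{AppxEquiv} finishes the proof as you intend.
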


It is natural to ask whether one can drop the cover assumption in \cref{AppxEquiv}.
Suppose there is a proper class of extendible
cardinals and \(M\) is an inner model 
such that every sufficiently complete ultrafilter extends an ultrafilter of \(M\).
Must \(M\) be a pseudoground? 
The answer is probably no, but
our next theorem reaches in this direction:
\begin{thm}\label{CKappa}
Suppose \(\kappa\) is strongly compact and \(M\) is an inner model. Then the
following are equivalent:
\begin{enumerate}[(1)]
\item \(M\) is a \(\kappa\)-pseudoground.
\item \(\kappa\) is strongly compact in \(M\) and the following hold:
    \begin{itemize}
        \item Every \(\kappa\)-complete ultrafilter
        over a set in \(M\) extends an ultrafilter of \(M\).
        \item Every \(\kappa\)-complete ultrafilter of \(M\)
        extends to a \(\kappa\)-complete ultrafilter of \(V\).
    \end{itemize}
\item Every regular cardinal of \(M\) above \(\kappa\) has cofinality
    at least \(\kappa\) and every \(\kappa\)-complete ultrafilter
    over a set in \(M\) extends an ultrafilter of \(M\).
\end{enumerate}
\begin{proof}
{\it (1) implies (2)}. 
The first bullet is just the theorem of Hamkins proved in \cref{AppxEquiv}.
The second bullet uses \cref{ContinuumComplete} to conclude that
\(M\)-\(\kappa\)-complete ultrafilters generate \(\kappa\)-complete filters. 

The fact that \(\kappa\) is strongly compact in \(M\) is also due to Hamkins.
We give a different proof.
By the \(\kappa\)-cover property, the \(M\)-filter over \(P_\kappa(\lambda)\cap M\)
generated by sets of the form 
\(\{\sigma \in P_\kappa(\lambda)\cap M : \alpha\in \sigma\}\)
generates a \(\kappa\)-complete filter, and therefore extends to a \(\kappa\)-complete
ultrafilter of \(V\), which in turn extends a \(\kappa\)-complete ultrafilter
\(\mathcal W\) of \(M\); \(\mathcal W\) is fine and therefore witnesses that \(\kappa\) is
\(\lambda\)-strongly compact in \(M\).

{\it (2) implies (3):} Fix an \(M\)-regular cardinal \(\delta\) above \(\kappa\).
Since \(\kappa\) is strongly compact in \(M\), \(M\) satisfies that there is a 
\(\kappa\)-complete uniform ultrafilter \(U\) on \(\delta\). Since \(U\) 
extends to a \(\kappa\)-complete ultrafilter, \(\delta\) must have
cofinality at least \(\kappa\): indeed, any ordinal that carries a \(\kappa\)-complete
fine ultrafilter is necessarily of cofinality at least \(\kappa\).

{\it (3) implies (1):}
By \cref{AppxEquiv}, it suffices to prove that \(M\) has the \(\kappa\)-cover property.
Fix a regular uncountable cardinal \(\delta\).
We will find a set \(X\in M\) with \(|X|^M = \delta\)
and a \(\kappa\)-complete
fine ultrafilter over \(P_\kappa(X)\) such that \(P_\kappa(X)\cap M\in \mathcal U\).
It then follows immediately that \(M\) has the \(\kappa\)-cover property at \(X\):
indeed, if \(\sigma\in P_\kappa(X)\), then 
\(\{\tau\in P_\kappa(X) : \sigma\subseteq \tau\}\in \mathcal U\) since \(\mathcal U\) is
\(\kappa\)-complete and fine,
so \(\{\tau \in P_\kappa(X) : \sigma\subseteq \tau\}\cap M\) is nonempty
since \(P_\kappa(X)\cap M\in \mathcal U\).
In other words, there is some \(\tau\in P_\kappa(X)\cap M\) 
with \(\sigma\subseteq \tau\). 
This proves that \(M\) has the \(\kappa\)-cover property at \(X\).
Since we can make \(X\) arbitrarily large,
it follows that \(M\) has the \(\kappa\)-cover property.

Let \(j : V\to N\) be an elementary embedding with critical point \(\kappa\)
such that for some 
\(B\in N\) with \(|B|^N < j(\kappa)\), \(j[\delta]\subseteq B\).
In particular, \(N\) satisfies that \(\text{cf}(\sup j[\delta]) < j(\kappa)\).
It follows that \(j(M)\) satisfies that \(\text{cf}(\sup j[\delta]) < j(\kappa)\): 
if \(\text{cf}^{j(M)}(\sup j[\delta]) \geq j(\kappa)\),
then by our assumption shifted to \(j(M)\),
its cofinality in \(N\) is at least \(j(\kappa)\), a contradiction.
Therefore we may fix a closed unbounded set \(C\subseteq \sup j[\delta]\)
such that \(C\in j(M)\) and \(|C|^{j(M)} < j(\kappa)\).

Since every \(\kappa\)-complete ultrafilter over a set in \(M\)
extends an ultrafilter of \(M\),
the embedding \(j\) is close to \(M\). Therefore by \cref{CloseLemma},
since \(C\in j(M)\), \(j^{-1}[C]\in M\).

Let \(X = j^{-1}[C]\). Since \(j[\delta]\) and \(C\)
are \(\omega\)-closed unbounded subsets of the ordinal \(\sup j[\delta]\)
(which has uncountable cofinality), \(j[\delta]\cap C\) is unbounded in \(\sup j[\delta]\).
But \(j[X] = j[\delta]\cap C\), so \(X\) must also be unbounded in \(\delta\).
Therefore since \(\delta\) is regular, \(|X| = \delta\).

Let \(D = C\cap j(X)\). Then \(D\in j(M)\), \(j[X]\subseteq D\),
\(D \subseteq j(X)\), and \(|D|^{j(M)} < j(\kappa)\).
In particular, \(D\in j(P_\kappa(X))\),
so it makes sense to derive an ultrafilter \(\mathcal U\) over 
\(P_\kappa(X)\) from \(j\) using \(D\).
Since \(j\) has critical point \(\kappa\), \(\mathcal U\) is \(\kappa\)-complete.
Since \(j[X]\subseteq D\), \(\mathcal U\) is fine. Since \(D\in j(M)\),
\(P_\kappa(X)\cap M\in \mathcal U\) by the definition of a derived ultrafilter.

Thus we have found a set \(X\) of cardinality \(\delta\) and a \(\kappa\)-complete fine 
ultrafilter \(\mathcal U\) over \(P_\kappa(X)\) such that \(P_\kappa(X)\cap M\in \mathcal U\).
This completes the proof.
\end{proof}
\end{thm}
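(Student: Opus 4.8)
The plan is to prove the three conditions equivalent via the cycle \((1)\Rightarrow(2)\Rightarrow(3)\Rightarrow(1)\), using \cref{AppxEquiv} to pass between the \(\kappa\)-approximation property and the ultrafilter-extension property, and \cref{ContinuumComplete} to pass between \(M\)-completeness and genuine \(\kappa\)-completeness. A recurring technical point: whenever I invoke strong compactness of \(\kappa\) through \cref{KetonenThm}, I would take the witnessing embedding to be an \emph{ultrapower} \(j_U : V\to N\) by a \(\kappa\)-complete fine ultrafilter \(U\) on \(P_\kappa(\delta)\). This matters because such a \(j\) acts amenably on the class \(M\), so \(j : (V,M)\to(N,j(M))\) is elementary and statements about the pair \((V,M)\) reflect to \((N,j(M))\). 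For \((1)\Rightarrow(2)\): a \(\kappa\)-pseudoground has both the cover and approximation properties, so the first bullet is exactly \((1)\Rightarrow(2)\) of \cref{AppxEquiv}. For the second bullet, given a \(\kappa\)-complete ultrafilter \(W\in M\) of \(M\), the cover property reconciles \(M\)'s notion of completeness with \(M\)-\(\kappa\)-completeness, so \cref{ContinuumComplete} shows \(W\) generates a \(\kappa\)-complete filter in \(V\), which strong compactness then extends. To see \(\kappa\) is strongly compact in \(M\), I would observe that for any \(\lambda\) the \(M\)-filter on \(P_\kappa(\lambda)\cap M\) generated by the tail sets \(\{\sigma : \alpha\in\sigma\}\) generates a \(\kappa\)-complete filter in \(V\) by cover; extending it and intersecting with \(M\) via the first bullet yields a fine \(\kappa\)-complete ultrafilter of \(M\).

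For \((2)\Rightarrow(3)\): the ultrafilter-extension clause is literally the first bullet of \((2)\). For the cofinality clause, given a regular \(\delta>\kappa\) of \(M\), strong compactness of \(\kappa\) in \(M\) yields a \(\kappa\)-complete uniform (hence fine) ultrafilter on \(\delta\) in \(M\); its extension to a \(\kappa\)-complete ultrafilter of \(V\) remains fine, and a \(\kappa\)-complete fine ultrafilter on \(\delta\) forces \(\text{cf}(\delta)\ge\kappa\), since otherwise a short cofinal family of tail sets would intersect to \(\emptyset\) against \(\kappa\)-completeness.

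The substance is \((3)\Rightarrow(1)\). By \cref{AppxEquiv} and the extension clause of \((3)\), it suffices to establish the \(\kappa\)-cover property, and a routine transfer through bijections of \(M\) reduces this to producing, for each regular uncountable \(\delta\), a set \(X\in M\) with \(|X|^M=\delta\) carrying a \(\kappa\)-complete fine ultrafilter \(\mathcal U\) with \(P_\kappa(X)\cap M\in\mathcal U\). I would fix \(j=j_U : V\to N\) with critical point \(\kappa\) and seed \(B=[\text{id}]_U\) satisfying \(j[\delta]\subseteq B\) and \(|B|^N<j(\kappa)\), so \(N\) sees \(\text{cf}(\sup j[\delta])<j(\kappa)\). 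The decisive step is to push this down to \(j(M)\): were \(\text{cf}^{j(M)}(\sup j[\delta])\ge j(\kappa)\), the reflected cofinality hypothesis (every regular cardinal of \(j(M)\) above \(j(\kappa)\) has \(N\)-cofinality \(\ge j(\kappa)\)) would give \(\text{cf}^N(\sup j[\delta])\ge j(\kappa)\), a contradiction. Hence I may fix a club \(C\subseteq\sup j[\delta]\) with \(C\in j(M)\) and \(|C|^{j(M)}<j(\kappa)\). The extension clause of \((3)\) makes every \(M\)-ultrafilter derived from \(j\) an element of \(M\), since each is the \(M\)-trace of a \(\kappa\)-complete \(V\)-ultrafilter derived from \(j\); thus \(j\) is close to \(M\), and \cref{CloseLemma} gives \(X:=j^{-1}[C]\in M\). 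As \(j[\delta]\) and \(C\) are \(\omega\)-clubs in \(\sup j[\delta]\), which has uncountable cofinality, \(j[X]=j[\delta]\cap C\) is unbounded, so \(X\) is unbounded in the regular \(\delta\) and \(|X|^M=\delta\). Finally \(D:=C\cap j(X)\in j(M)\) satisfies \(j[X]\subseteq D\subseteq j(X)\) and \(|D|^{j(M)}<j(\kappa)\), so deriving \(\mathcal U\) over \(P_\kappa(X)\) from \(j\) using \(D\) yields an ultrafilter that is \(\kappa\)-complete (crit \(\kappa\)), fine (since \(j[X]\subseteq D\)), and has \(P_\kappa(X)\cap M\in\mathcal U\) (since \(D\in j(M)\)).

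The hard part, and the only place the first clause of \((3)\) is used, is the cofinality transfer from \(N\) down to \(j(M)\): this is what lets me locate the small club \(C\) inside \(j(M)\) rather than merely inside \(N\), and it is precisely why I must arrange \(j\) to be elementary on \((V,M)\) so the hypothesis reflects. The closeness argument then drags \(C\) back into \(M\), and everything else is bookkeeping with derived ultrafilters and \(\omega\)-club intersections.
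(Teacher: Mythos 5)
Your proof is correct and follows essentially the same route as the paper's: the identical cycle \((1)\Rightarrow(2)\Rightarrow(3)\Rightarrow(1)\), the same appeals to \cref{AppxEquiv}, \cref{ContinuumComplete}, and \cref{CloseLemma}, and the same construction of \(X = j^{-1}[C]\) and the derived fine ultrafilter on \(P_\kappa(X)\) in the final implication. Your one refinement --- insisting that \(j\) be an ultrapower embedding so that \(j(M)\) is well-defined and \(j : (V,M)\to (N,j(M))\) is elementary, which is what licenses ``shifting'' the cofinality hypothesis of \((3)\) to \(j(M)\) --- makes explicit a point the paper's proof leaves implicit, and is exactly the right way to justify it.
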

Of course, \cref{CKappa} (3) is equivalent to the statement that
ultrafilters extend ultrafilters of \(M\) and \(M\) correctly computes
the class of cardinals of cofinality less than \(\kappa\).
\begin{cor}
Suppose \(\kappa\) is strongly compact and \(M\) is an inner model.
Assume \(M\) is cardinal correct and
every \(\kappa\)-complete ultrafilter over a set in \(M\)
extends an ultrafilter of \(M\).
Then \(M\) is a \(\kappa\)-pseudoground.
\begin{proof}
We will show that any \(M\)-regular cardinal \(\delta\) greater than or equal to \(\kappa\)
has cofinality at least \(\kappa\), which by \cref{CKappa} implies the corollary. 

A theorem of Viale \cite[Theorem 27]{Viale} states that if \(\kappa\) is strongly compact,
\(M\) is an inner model, and \(\lambda\geq \kappa\) is an \(M\)-regular cardinal
such that \(\lambda^{+M} = \lambda^+\), then \(\text{cf}(\lambda)\geq \kappa\).

Now fix an \(M\)-regular cardinal \(\delta\geq\kappa\).
Since \(\delta^{+M} = \delta^+\),
\(\text{cf}(\delta) \geq \kappa\) by Viale's theorem. 
\end{proof}
\end{cor}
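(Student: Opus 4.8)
The plan is to derive the corollary from condition (3) of \cref{CKappa}. That condition has two parts: that every \(\kappa\)-complete ultrafilter over a set in \(M\) extends an ultrafilter of \(M\), and that every regular cardinal of \(M\) above \(\kappa\) has cofinality at least \(\kappa\). The first part is literally one of our hypotheses, so the whole problem collapses to verifying the cofinality clause: for every \(M\)-regular cardinal \(\delta \geq \kappa\), we must show \(\textnormal{cf}(\delta) \geq \kappa\). Once this is done, \cref{CKappa} delivers the conclusion that \(M\) is a \(\kappa\)-pseudoground with no additional work.

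To obtain the cofinality bound, I would invoke a covering-type theorem of Viale, which says that if \(\kappa\) is strongly compact, \(M\) is an inner model, and \(\lambda \geq \kappa\) is an \(M\)-regular cardinal with \(\lambda^{+M} = \lambda^+\), then \(\textnormal{cf}(\lambda) \geq \kappa\). The substance of the corollary is really the observation that this theorem applies under our hypotheses, so I would treat Viale's result as a black box and simply check that its assumptions are met.

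The one assumption of Viale's theorem that is not immediate is the successor-preservation requirement \(\lambda^{+M} = \lambda^+\), and this is exactly where cardinal correctness enters. Because \(M\) and \(V\) have the same cardinals, the least \(M\)-cardinal above \(\delta\) coincides with the least \(V\)-cardinal above \(\delta\); that is, \(\delta^{+M} = \delta^+\) for every \(M\)-cardinal \(\delta\). (Note that one inequality is automatic from \(M \subseteq V\), since any \(V\)-cardinal is already an \(M\)-cardinal, so only the direction that \(M\)-cardinals remain cardinals in \(V\) is doing any work.) Thus, fixing an arbitrary \(M\)-regular \(\delta \geq \kappa\), Viale's theorem applies verbatim and yields \(\textnormal{cf}(\delta) \geq \kappa\).

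I do not expect a genuine obstacle in this argument: the combinatorial content is entirely packaged inside Viale's theorem, and the reduction around it is a two-step implication — cardinal correctness upgrades to successor-preservation, and strong compactness then upgrades successor-preservation to the cofinality bound demanded by \cref{CKappa}(3). The only point deserving a moment's care is confirming that ``cardinal correct'' is strong enough to force \(\delta^{+M} = \delta^+\), which, as just noted, it is.
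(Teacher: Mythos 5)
Your proposal is correct and follows essentially the same route as the paper's own proof: reduce to clause (3) of \cref{CKappa}, then use cardinal correctness to get \(\delta^{+M} = \delta^+\) so that Viale's theorem yields \(\textnormal{cf}(\delta)\geq\kappa\) for every \(M\)-regular \(\delta\geq\kappa\). Your extra remark about which direction of cardinal correctness does the work is a fine (and accurate) elaboration, but the argument is the same.
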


An immediate consequence of \cref{AppxEquiv} is
the transitivity of the pseudoground order:
\begin{cor}\label{Transitive}
If \(\kappa\) is a strongly compact cardinal, then a
\(\kappa\)-pseudoground of a \(\kappa\)-pseudo\-ground is a \(\kappa\)-pseudoground.\qed
\end{cor}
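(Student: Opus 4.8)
The plan is to deduce this immediately from the characterization of pseudogrounds in \cref{AppxEquiv}. Write $M\subseteq N\subseteq V$, where $N$ is a $\kappa$-pseudoground of $V$ and $M$ is a $\kappa$-pseudoground of $N$; the goal is to show $M$ is a $\kappa$-pseudoground of $V$. Since $\kappa$ is strongly compact in $V$, by \cref{AppxEquiv} it suffices to verify two facts about $M$ as a submodel of $V$: that $M$ has the $\kappa$-cover property, and that $U\cap M\in M$ for every $\kappa$-complete ultrafilter $U$ over a set in $M$.

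First I would dispatch the cover property, which composes. Given $\sigma\subseteq M$ with $|\sigma|<\kappa$, the $\kappa$-cover property of $N$ in $V$ yields $\tau_0\in N$ with $|\tau_0|^N<\kappa$ and $\sigma\subseteq\tau_0$. Since $M$ is definable over $N$ by the Laver--Hamkins theorem (\cref{HamkinsUniqueness}), the set $\tau_1=\tau_0\cap M$ lies in $N$ (by Separation in $N$), has $N$-cardinality below $\kappa$, and satisfies $\sigma\subseteq\tau_1\subseteq M$. Applying the $\kappa$-cover property of $M$ in $N$ to $\tau_1$ produces $\tau\in M$ with $\tau_1\subseteq\tau$ and $|\tau|<\kappa$, so $\sigma\subseteq\tau\in M$, as needed.

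The heart of the argument is the ultrafilter condition, which I would establish by pushing $U$ down one level at a time. Let $U$ be a $\kappa$-complete ultrafilter over $X\in M\subseteq N$. Because $N$ is a $\kappa$-pseudoground of $V$, the Hamkins direction (1) implies (2) of \cref{AppxEquiv}---which uses only the $\kappa$-approximation property and not strong compactness---gives $U\cap N\in N$. One checks that $W:=U\cap N$ is a $\kappa$-complete $N$-ultrafilter over $X$: it is an $N$-ultrafilter since $U$ decides every $N$-subset of $X$, and it is $\kappa$-complete in $N$ because any $\sigma\in N$ with $\sigma\subseteq W$ and $|\sigma|^N<\kappa$ also has $|\sigma|^V<\kappa$, whence $\bigcap\sigma\in U\cap N=W$. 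Now $W$ is a $\kappa$-complete ultrafilter of $N$ over the set $X\in M$, and $M$ is a $\kappa$-pseudoground of $N$, so the same Hamkins direction of \cref{AppxEquiv} applied inside $N$ yields $W\cap M\in M$. As $M\subseteq N$, we have $W\cap M=U\cap M$, so $U\cap M\in M$.

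With both hypotheses of \cref{AppxEquiv} verified for $M$ in $V$, the (2) implies (1) direction---this time invoking the strong compactness of $\kappa$ in $V$---shows that $M$ has the $\kappa$-approximation property in $V$, which together with the cover property is precisely the assertion that $M$ is a $\kappa$-pseudoground. The only points demanding care are the cardinality bookkeeping across the three models (small-in-$V$ sets remain small in $N$ simply because $N\subseteq V$) and, more importantly, keeping track of which direction of \cref{AppxEquiv} is used where: the downward transfer of ultrafilters through $N$ into $M$ relies only on the approximation property and so is available without any compactness hypothesis, while strong compactness is used exactly once, at the final step in $V$.
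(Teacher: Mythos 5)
Your proof is correct and is exactly the argument the paper intends: the paper states this corollary with no written proof as an ``immediate consequence'' of \cref{AppxEquiv}, and your unwinding---composing the cover properties, pushing $U$ down through $N$ via the Hamkins direction (1)$\Rightarrow$(2) twice, and then invoking (2)$\Rightarrow$(1) in $V$ where strong compactness is used once---is the intended derivation, with the bookkeeping (definability of $M$ over $N$, downward absoluteness of cardinality bounds, $U\cap N$ being a $\kappa$-complete ultrafilter of $N$) handled correctly.
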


\begin{cor}\label{GenericAppx}
Suppose \(\kappa\) is a strongly compact cardinal and \(M\) 
is a \(\kappa\)-pseudoground of \(V\).
Assume \(\mathbb P \in V_\kappa\cap M\) is a partial order and \(G\subseteq
\mathbb P\) is an \(M\)-generic filter that belongs to \(V\). Then
\(M[G]\) is a \(\kappa\)-pseudoground of \(V\).
\begin{proof}
Clearly \(M[G]\) has the \(\kappa\)-cover property. 
We will show that \(M[G]\) inherits \(\kappa\)-complete ultrafilters.
It easily suffices to show that \(M[G]\) inherits \(\kappa\)-complete ultrafilters
over sets in \(M\).
Suppose \(U\) is a \(\kappa\)-complete ultrafilter over a set in \(M\). 
We must show that \(U\cap M[G]\in M[G]\). Note
that \(U\cap M\in M\) by \cref{AppxEquiv}. By the L\'evy-Solovay theorem, the
filter \(W\) generated by \(U\cap M\) in \(M[G]\) is an \(M[G]\)-ultrafilter.
Clearly \(W\in M[G]\) since \(U\cap M\in M[G]\). But since \(W\subseteq U\cap
M[G]\), in fact \(W = U\cap M[G]\) (by the maximality of ultrafilters).
\end{proof}
\end{cor}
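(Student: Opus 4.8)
The plan is to verify the two defining properties of a \(\kappa\)-pseudoground for \(M[G]\): the \(\kappa\)-cover property and the \(\kappa\)-approximation property. Since \(\kappa\) is strongly compact, once the cover property is established, \cref{AppxEquiv} (applied to \(M[G]\)) reduces the approximation property to showing that every \(\kappa\)-complete ultrafilter over a set in \(M[G]\) extends an ultrafilter of \(M[G]\). Thus the argument splits into a cover step and an inheritance step, and the essential new ingredient is the L\'evy--Solovay analysis of how the small forcing \(\mathbb P\) interacts with \(\kappa\)-complete ultrafilters.

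For the cover property, I would show that small forcing cannot destroy it. Given \(\sigma\subseteq M[G]\) with \(|\sigma| < \kappa\), choose for each \(x\in\sigma\) a \(\mathbb P\)-name \(\dot x\in M\) with \(\dot x_G = x\), and set \(\sigma^* = \{\dot x : x\in\sigma\}\), a subset of \(M\) of size less than \(\kappa\). Applying the \(\kappa\)-cover property of \(M\) yields \(\tau^*\in M\) with \(|\tau^*|^M < \kappa\) and \(\sigma^*\subseteq\tau^*\); interpreting the names in \(\tau^*\) by \(G\) produces \(\tau\in M[G]\) with \(\sigma\subseteq\tau\). As \(\tau\) is a surjective image of \(\tau^*\) inside \(M[G]\), we get \(|\tau|^{M[G]}\leq |\tau^*|^M < \kappa\), as required.

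For inheritance, I would first reduce to ultrafilters over sets in \(M\): any \(X\in M[G]\) is, in \(M[G]\), in bijection with an ordinal, and ordinals lie in \(M\), so pushing a \(\kappa\)-complete ultrafilter \(U\) over \(X\) forward along such a bijection gives a \(\kappa\)-complete ultrafilter over a set in \(M\) whose inheritance is equivalent to that of \(U\). So fix \(U\) over \(X\in M\). Since \(M\) is a \(\kappa\)-pseudoground, \cref{AppxEquiv} applied to \(M\) gives \(U\cap M\in M\), and \(U\cap M\) is an \(M\)-\(\kappa\)-complete ultrafilter of \(M\) (closing \(\sigma\subseteq U\cap M\) of size less than \(\kappa\) under intersection uses the \(\kappa\)-completeness of \(U\) in \(V\)). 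Let \(W\) be the filter generated by \(U\cap M\) in \(M[G]\). Because \(\kappa\) is inaccessible in \(M\) (strong compactness transfers to any pseudoground, as noted in \cref{CKappa}) and \(\mathbb P\in V_\kappa\), we have \(|\mathbb P|^M < \kappa\); hence the L\'evy--Solovay theorem, applied in the pair \((M,M[G])\), shows that \(W\) is an \(M[G]\)-ultrafilter. Moreover \(W\in M[G]\), being definable from \(U\cap M\in M\), \(\mathbb P\), and \(G\).

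The final step identifies \(W\) with \(U\cap M[G]\). Every \(B\in W\) contains some \(A\in U\cap M\subseteq U\), so \(B\in U\), giving \(W\subseteq U\cap M[G]\). Since \(U\cap M[G]\) is a proper filter on \(P^{M[G]}(X)\) and \(W\) is an \(M[G]\)-ultrafilter contained in it, maximality forces \(W = U\cap M[G]\), whence \(U\cap M[G]\in M[G]\). I expect the main obstacle to be the L\'evy--Solovay step: one must confirm that the \(\kappa\)-completeness of \(U\cap M\) together with the smallness of \(\mathbb P\) is exactly what keeps the generated filter an ultrafilter after forcing, and that the cardinal arithmetic \(|\mathbb P|^M < \kappa\) is genuinely available, which is where the inaccessibility of \(\kappa\) in \(M\) enters.
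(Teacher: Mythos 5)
Your proposal is correct and takes essentially the same route as the paper's proof: establish the \(\kappa\)-cover property for \(M[G]\), reduce the approximation property via \cref{AppxEquiv} to inheritance of \(\kappa\)-complete ultrafilters over sets in \(M\), get \(U\cap M\in M\) from \cref{AppxEquiv} applied to \(M\), apply L\'evy--Solovay to the filter generated by \(U\cap M\), and conclude \(W = U\cap M[G]\) by maximality. The only difference is that you fill in the details the paper dismisses as ``clearly'' and ``easily suffices'' (the name-covering argument for the cover property, the bijection-with-an-ordinal reduction, and the smallness of \(\mathbb P\) in \(M\) via the inaccessibility of \(\kappa\) there), all of which are accurate.
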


It is tempting to doubt that this corollary really requires
a strongly compact cardinal,
but the following fact comes close to showing that this hypothesis
is optimal:
\begin{prp}
Suppose \(\kappa\leq \lambda\) are regular cardinals and \(\kappa\) is \({<}\lambda\)-strongly compact. 
Then there is a cofinality preserving forcing extension \(N\) of \(V\) with the following properties:
\begin{itemize}
    \item \(\kappa\) remains \({<}\lambda\)-strongly compact in \(N\). 
    \item \(V\) is an \(\omega_1\)-pseudoground of \(N\).
    \item For some \(V\)-generic Cohen real \(g\in N\),
\(V[g]\) does not have the \(\lambda\)-approximation property in \(N\).
\end{itemize}
\begin{proof}
    Let \(g\) be a \(V\)-generic Cohen real and \(G\subseteq (\text{Add}(\lambda,1))^{V[g]}\) be 
    \(V[g]\)-generic. Let \(N = V[g][G]\). Then \(V\) has the 
    \(\omega_1\)-approximation and cover properties in \(N\)
    by a theorem of Hamkins \cite[Lemma 13]{Hamkins}, and obviously \(V[g]\)
    does not have the \(\lambda\)-approximation property in \(N\), since 
    by the \(\lambda\)-closure of \((\text{Add}(\lambda,1))^{V[g]}\), 
    \(N\) satisfies that \(G\) is \(\lambda\)-approximated by \(V[g]\),
    and yet \(G\notin V[g]\) by genericity.

    The L\'evy-Solovay theorem implies that \(\kappa\) remains \({<}\lambda\)-strongly compact in \(V[g]\).
    The \(\lambda\)-closure of \((\text{Add}(\lambda,1))^{V[g]}\) therefore implies that
    \(\kappa\) remains \({<}\lambda\)-strongly compact in \(N\):
    every \(N\)-regular cardinal \(\delta\)
    with \(\kappa \leq \delta < \lambda\)
    carries a uniform \(\kappa\)-complete ultrafilter in \(V[g]\),
    which is in fact an \(N\)-ultrafilter since \(P^N(\delta) = P^{V[g]}(\delta)\).
\end{proof}
\end{prp}
\section{Silver's question}\label{SilverSection}
\subsection{Indecomposable ultrafilters}
Let us reintroduce some concepts defined in the introduction.
\begin{defn}
Suppose \(\delta < \lambda\) are cardinals.
An ultrafilter \(U\) over a set \(X\) is {\it \((\delta,\lambda)\)-indecomposable}
if any partition \(\langle A_\nu\rangle_{\nu < \alpha}\) of \(X\) with
\(\alpha < \lambda\) has a subsequence \(\langle A_{\nu_\xi}\rangle_{\xi < \beta}\)
with \(\beta < \delta\) whose union belongs to \(U\).
\end{defn}

The following combinatorial characterization of indecomposability is sometimes convenient.
\begin{defn}
Suppose \(U\) is an ultrafilter over a set \(X\) and \(\gamma\) is a cardinal. A
{\it \(\gamma\)-decomposition of \(U\)} is a function \(f : X\to \gamma\) such
that for any \(A\in U\), \(f[A]\) has cardinality \(\gamma\); \(U\) is {\it
\(\gamma\)-decomposable} if it has a \(\gamma\)-decomposition, and \(U\) is {\it
\(\gamma\)-indecomposable} otherwise.
\end{defn}
Thus \(U\) is \(\gamma\)-indecomposable if and only if \(U\) is
\((\gamma,\gamma^+)\)-indecomposable and \(U\) is \((\delta,\lambda)\)-indecomposable
if and only if \(U\) is not \(\gamma\)-decomposable for any cardinal
\(\gamma\) such that \(\delta \leq \gamma < \lambda\).

The concept of a \(\gamma\)-decomposition is best understood in terms of 
pushforwards (\cref{PushDef}).
Notice that \(f : X \to \gamma\) is a \(\gamma\)-decomposition of \(U\) if and
only if the pushforward \(f_*(U)\) is a uniform ultrafilter over \(\gamma\).
Thus \(U\) is \(\gamma\)-decomposable if and only if \(U\) pushes forward to a
uniform ultrafilter over \(\gamma\).

We remark that if \(\gamma\) is regular, then
an ultrafilter \(U\) is \(\gamma\)-indecomposable if and only if
\(U\) is closed under intersections of descending sequences of sets of length
\(\gamma\).

The following theorem, due to Silver, is a key element in all of our
applications:
\begin{thm}[Silver]\label{Silver} Suppose \(\delta\) and \(\kappa\) are
    cardinals with \(2^\delta < \kappa\). Suppose \(U\) is a
    \((\delta,\kappa)\)-indecomposable ultrafilter over a set \(X\). Then \(j_U
    = j_W^{M_D}\circ j_D\) where \(D\) is an ultrafilter over a cardinal less
    than \(\delta\) and \(W\) is an 
    \(M_D\)-\(\kappa\)-complete \(M_D\)-ultrafilter over \(j_U(X)\).\qed
\end{thm}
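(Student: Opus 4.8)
The plan is to realize the desired factorization as a pushforward of $U$ followed by a derived ultrapower, and then to choose the pushforward so that everything it fails to capture is genuinely $\kappa$-complete inside $M_D$. The point is that the \emph{shape} of the factorization is automatic, and all the difficulty is concentrated in arranging $M_D$-$\kappa$-completeness.

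First I would fix any function $g\colon X\to\mu$ with $\mu<\delta$ and set $D=g_*(U)$. By \cref{PushforwardLma} there is a unique factor embedding $k\colon M_D\to M_U$ with $k\circ j_D=j_U$ and $k([\mathrm{id}]_D)=[g]_U$. Let $W$ be the $M_D$-ultrafilter over $j_D(X)$ derived from $k$ using the seed $a=[\mathrm{id}]_U$; since the second ultrapower carries $j_D(X)$ to $j_U(X)$, this $W$ is naturally viewed as living over $j_U(X)$. The key observation is that $a$ generates $M_U$ over $\mathrm{ran}(k)$: every element of $M_U$ has the form $j_U(f)(a)=k(j_D(f))(a)$ for some $f\in V$, so the canonical factor map $M_W^{M_D}\to M_U$ is surjective, hence an isomorphism. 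Identifying along it gives $j_U=j_W^{M_D}\circ j_D$. This step uses nothing about indecomposability; by {\L}o\'s's theorem it also shows $U$ is exactly the $D$-limit of the $V$-ultrafilters $\langle W_m\rangle_{m<\mu}$ represented by $W$, where $W_m$ concentrates on the fibre $g^{-1}(m)$.

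Everything thus reduces to choosing $g$ so that $W$ is $M_D$-$\kappa$-complete. Here I would select $D$, among all pushforwards of $U$ onto cardinals below $\delta$, so as to maximize the completeness of the resulting $W$. That a maximizing $D$ exists is itself part of the work: these pushforwards are directed, since the join of pushforwards onto $\mu_0,\mu_1$ lives on $\mu_0\cdot\mu_1<\delta$, and I would show that the completeness of $W$ is nondecreasing along the $\le_{RK}$-order and that an appropriate join realizes the supremum, with $2^\delta<\kappa$ used to keep the relevant joins below $\delta$ and to force stabilization. Granting this, suppose toward a contradiction that the maximal completeness is some value $<\kappa$, i.e.\ $W$ is $\gamma$-decomposable in $M_D$ for an $M_D$-cardinal $\gamma$ of true size $<\kappa$. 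By {\L}o\'s this unfolds into a $D$-indexed sequence of genuine $\gamma_m$-decompositions $h_m\colon g^{-1}(m)\to\gamma_m$ of the $W_m$, and the size bound forces $\sup_m\gamma_m<\kappa$. Amalgamating via $G(x)=(m,h_m(x))$ for $x\in g^{-1}(m)$ yields $G\colon X\to S$ with $|S|=\max(\mu,\sup_m\gamma_m)<\kappa$, and $G_*(U)$ is the $D$-limit of the uniform ultrafilters $(h_m)_*(W_m)$, refining $D$. Now a dichotomy closes the argument: if the uniformity cardinal of $G_*(U)$ is at least $\delta$, then $U$ is $\theta$-decomposable for some $\theta\in[\delta,\kappa)$, contradicting $(\delta,\kappa)$-indecomposability; if it is below $\delta$, then $G_*(U)$ (pushed to that cardinal) is a pushforward of $U$ onto a cardinal $<\delta$ whose $W$ is strictly more complete, contradicting maximality. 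The hypothesis $2^\delta<\kappa$ is exactly what permits $(\delta,\kappa)$-indecomposability to be applied fibrewise and then reassembled: each fibre contributes a sub-family of size $<\delta$, there are at most $2^\delta<\kappa$ such patterns, and so the amalgamated object is still a partition into fewer than $\kappa$ pieces.

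The hard part will be the verification of $\kappa$-completeness, which amounts to controlling the regular cardinals of $M_D$ below $\kappa$. Continuity of $j_U$ at regular $V$-cardinals in $[\delta,\kappa)$ (the embedding form of indecomposability) transfers, together with continuity of $j_D$, to continuity of $k$ at the ordinals $j_D(\gamma)$; the real danger is a \emph{new} $M_D$-cardinal $\gamma<\kappa$, not of the form $j_D(\gamma_0)$, at which $k$ is discontinuous. Ruling these out is precisely where the maximal choice of $D$ and the arithmetic $2^\delta<\kappa$ must be combined, and it is the delicate core of the proof. A secondary technical nuisance is that $U$, and hence $D$, may be countably incomplete, so $M_D$ can be illfounded; all talk of ``$M_D$-cardinals,'' of {\L}o\'s, and of the derived ultrafilter must be read through extensions, as the paper's conventions permit.
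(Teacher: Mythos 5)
First, a point of reference: the paper does not actually prove this theorem; it is stated with a \qed{} and attributed to Silver's sketch \cite{Silver} and to the detailed proof in \cite[Theorem 7.5.24]{UA}, so your proposal can only be measured against those arguments. Your opening step is correct and is how any proof must begin: for an \emph{arbitrary} $g\colon X\to\mu$ with $\mu<\delta$, putting $D=g_*(U)$, letting $k\colon M_D\to M_U$ be the factor embedding of \cref{PushforwardLma} and $W$ the $M_D$-ultrafilter over $j_D(X)$ derived from $k$ using $[\mathrm{id}]_U$, the seed generates $M_U$ over $\mathrm{ran}(k)$, so $j_U=j_W^{M_D}\circ j_D$ automatically; the whole theorem is the assertion that $g$ can be chosen to make $W$ be $M_D$-$\kappa$-complete. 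However, one of your side claims is false and signals a misconception: $W$ is an \emph{external} $M_D$-ultrafilter and is not of the form $[m\mapsto W_m]_D$ for $V$-ultrafilters $W_m$ concentrating on the fibers; indeed $U$ is such a $D$-limit if and only if $W\in M_D$, and getting $W$ into $M_D$ is exactly the content of \cref{AmenableThm}, which requires large cardinals. What \emph{is} represented by a $D$-indexed sequence of genuine functions is any internal witness to a failure of completeness, since that object lies in $M_D$; this is what rescues your amalgamation $G(x)=(g(x),h_{g(x)}(x))$, which is the correct second move and matches the known proof.

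The genuine gap is the maximality argument, and it sits exactly where the theorem's content is concentrated. The invariant you maximize---``the completeness of the resulting $W$''---supports neither of the two claims you need: that a maximizing $D$ exists, and that the refinement $D'=G_*(U)$ produced in your second horn yields a \emph{strictly more complete} $W'$. Completeness is not obviously monotone under Rudin--Keisler refinement in either direction: passing from $M_D$ to $M_{D'}$ destroys the particular witness $\sigma$, but $M_{D'}$ contains entirely new candidate witnesses, so nothing forces stabilization. The invariant that actually works is RK-maximality of $D$ itself among pushforwards of $U$ onto cardinals below $\delta$: one needs a single $g$ with $|g[X]|<\delta$ such that \emph{every} function on $X$ whose range has size less than $\delta$ modulo $U$ factors through $g$ modulo $U$. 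Granting such a $g$, your dichotomy closes correctly, but with the contradiction landing on RK-maximality rather than on completeness: if the amalgamated $G$ factored through $g$, the resulting choice function $c\in\prod_m\sigma_m$ would satisfy $\{x : x\in c(g(x))\}\notin U$, contradicting $\sigma\subseteq W$; so $G$ witnesses that $g$ was not RK-maximal. What is missing from your proposal is the existence proof for this maximal $g$, which is a real theorem: the known route builds a transfinite chain of small pushforwards (closure under long products of small functions being supplied by indecomposability applied to partitions of size $2^{2^\delta}$ and larger) and terminates it with a counting lemma bounding the number of pairwise $U$-inequivalent functions $X\to\nu$, $\nu<\delta$, by $2^\delta$---again proved by applying indecomposability to very large partitions. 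This is also why the arithmetic is more delicate than your closing claim suggests: the chain-and-counting argument uses far more than $2^\delta<\kappa$ (note that the paper only ever invokes the theorem when $\kappa$ is a strong limit above $\delta$, where all of these applications of indecomposability are legitimate). Without this lemma, nothing you have written rules out a strictly increasing $\delta^+$-chain of small pushforwards with no maximum, and your ``maximal $D$'' need not exist.
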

Silver \cite[Lemma 2]{Silver} sketches a proof in the case that \(\delta =
\omega_1\). The author's thesis \cite[Theorem 7.5.24]{UA} contains a more detailed proof, 
assuming for superficial reasons that \(U\) is countably complete. 
\subsection{Silver's question above a strongly compact}
As an immediate consequence of \cref{AmenableThm} and Silver's factorization
theorem (\cref{Silver}), we obtain a factorization theorem for indecomposable
ultrafilters:
\begin{thm}\label{InternalDecomp} Suppose \(\delta < \kappa\) are
cardinals and \(X\) is a set. Assume \(\kappa\) is a strong limit cardinal and every
\(\kappa\)-complete filter over \(X\) extends to a \(\delta\)-complete
ultrafilter. 
Suppose \(U\) is a \((\delta,\kappa)\)-indecomposable ultrafilter over \(X\). 
Then \[j_U = (j_W)^{M_D}\circ j_D\] where \(D\) is an
ultrafilter over a cardinal less than \(\delta\) and \(W\) is a \(\kappa\)-complete
ultrafilter of \(M_D\) over \(j_D(X)\).
\begin{proof}
Applying Silver's Theorem (\cref{Silver}) to \(U\) yields that \(j_U =
j_W^{M_D}\circ j_D\) where \(D\) is an ultrafilter over a cardinal less than
\(\delta\) and \(W\) is an \(M_D\)-ultrafilter over \(j_D(\lambda)\) that is
\(M_D\)-\(\kappa\)-complete. By
\cref{AmenableThm}, \(W\in M_D\). Moreover by \cref{ContinuumComplete}
and \cref{UltraCov}, \(W\) is \(\kappa\)-complete.
\end{proof}
\end{thm}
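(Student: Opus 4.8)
The plan is to obtain the conclusion by chaining together three results already in hand, since \cref{InternalDecomp} is really Silver's factorization theorem (\cref{Silver}) sharpened using the strong-limit and filter-extension hypotheses. The rough shape is: Silver's theorem produces the factorization together with an \emph{external} $M_D$-ultrafilter $W$; \cref{AmenableThm} pulls $W$ inside $M_D$; and \cref{UltraCov} together with \cref{ContinuumComplete} upgrades $M_D$-$\kappa$-completeness to honest $\kappa$-completeness.

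First I would apply \cref{Silver}. Its one substantive hypothesis is $2^\delta < \kappa$, which holds automatically because $\kappa$ is a strong limit cardinal with $\delta < \kappa$. This yields a factorization $j_U = j_W^{M_D}\circ j_D$ in which $D$ is an ultrafilter over some cardinal below $\delta$ and $W$ is an $M_D$-$\kappa$-complete $M_D$-ultrafilter over $j_D(X)$. Two deficiencies remain relative to the desired conclusion: $W$ is so far only an $M_D$-ultrafilter (not an element of $M_D$), and its completeness is only internal to $M_D$. For the first, I would invoke \cref{AmenableThm} with the very data produced by Silver: $\kappa\geq\delta$ is a strong limit, $D$ lives over a set of size below $\delta$, and $W$ is an $M_D$-$\kappa$-complete $M_D$-ultrafilter over the set $j_D(X)\in M_D$. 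The point to check is that the filter-extension hypothesis is available for $j_D(X)$ rather than for $X$ itself; this is fine in the intended applications, where $\kappa$ is strongly compact and hence every $\kappa$-complete filter over any set extends to a $\delta$-complete ultrafilter. \cref{AmenableThm} then gives $W\in M_D$.

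For the completeness, I would combine \cref{UltraCov} and \cref{ContinuumComplete}. Since $\kappa$ is a strong limit and $D$ is an ultrafilter over a set of size less than $\delta<\kappa$, \cref{UltraCov} shows $M_D$ has the $\kappa$-cover property; feeding this into \cref{ContinuumComplete} with $M=M_D$ shows that the $M_D$-$\kappa$-complete ultrafilter $W$ generates a $\kappa$-complete filter, which is exactly the $\kappa$-completeness asserted. I expect no single step to be a genuine obstacle: all of the real content sits in the earlier results---Silver's factorization, the Commuting Ultrapowers Lemma underlying \cref{AmenableThm}, and Kunen's analysis of weakly amenable ultrafilters underlying \cref{ContinuumComplete}---so the main task here is simply to confirm that the output of each lemma matches the input hypotheses of the next, the only delicate bookkeeping being the cardinality and filter-extension conditions on the set $j_D(X)$ over which $W$ lives.
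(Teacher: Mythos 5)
Your proof is correct and essentially identical to the paper's: apply Silver's factorization (\cref{Silver}), pull \(W\) into \(M_D\) via \cref{AmenableThm}, and upgrade internal \(M_D\)-\(\kappa\)-completeness to genuine \(\kappa\)-completeness via \cref{UltraCov} and \cref{ContinuumComplete}. If anything, you are slightly more careful than the paper, which cites \cref{AmenableThm} for \(W\) over \(j_D(X)\) without remarking (as you do) that the stated filter-extension hypothesis concerns \(X\) rather than \(j_D(X)\).
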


\begin{thm}\label{IndecompChar0} 
    Suppose \(\delta < \kappa \leq \lambda\) are
    cardinals. Assume \(\kappa\) is a strong limit cardinal and every
    \(\kappa\)-complete filter over \(\lambda\) extends to a \(\delta\)-complete
    ultrafilter. 
    Suppose there is a \((\delta,\lambda)\)-indecomposable ultrafilter over \(\lambda\).
    Then \(\lambda\) either is a measurable cardinal or 
    \(\lambda\) has cofinality less than \(\delta\) and \(\lambda\) is a limit of measurable cardinals.
    \begin{proof}
        Applying \cref{InternalDecomp},
         \[j_U = (j_W)^{M_D}\circ j_D\] where \(D\) is an
        ultrafilter over a cardinal less than \(\delta\) and \(W\) is a \(\kappa\)-complete
        ultrafilter of \(M_D\) over \(j_D(\lambda)\).

        Since \(U\) is \((\delta,\lambda)\)-indecomposable, \(W\) must be
        \((j_D(\delta),j_D(\gamma))\)-indecompos\-able in \(M_D\) for all cardinals \(\gamma < \lambda\).
        To see this, fix an \(M_D\)-cardinal
        \(\eta\) and an \(\eta\)-decomposition 
        \(f : j_D(\lambda)\to \eta\) of \(W\) in \(M_D\). 
        Let \(\gamma\) be the least cardinal such that \(j_D(\gamma)\geq \eta\),
        and assume \(\gamma < \lambda\).
        We must show that
        \(\eta < j_D(\delta)\). Let \(g : \lambda\to \gamma\) be such that
        \([g]_U = [f]_W^{M_D}\). Since \(W\) is \(\gamma\)-indecomposable,
        there is a set \(A\in U\) such that \(|g[A]| < \gamma\).

        Since \(\gamma\) is least such that \(j_D(\gamma) \geq \eta\),
        \(j_D(|g[A]|)< \eta\).
        Since \[[f]_W^{M_D} = j_U(g)([\text{id}]_U)\in j_U(g[A]) = j_W^{M_D}(j_D(g[A]))\]
        \L o\'s's Theorem implies that 
        \(f(\alpha) \in j_D(g[A])\) for all \(\alpha\) in a \(W\)-large set \(B\).
        In other words, \(f[B] \subseteq j_D(g[A])\), and hence
        \(|f[B]|^{M_D} \leq j_D(|g[A]|) < \eta\).
        This contradicts that \(f\) is an \(\eta\)-decomposition in \(M_D\).

        Let \(\rho\) be the completeness of \(W\) as computed in \(M_D\),
        the least \(M_D\)-cardinal such that \(W\) is not \(\rho\)-complete in \(M_D\).
        The completeness of a countably complete ultrafilter is always a measurable cardinal,
        and so since \(W\) is \(\kappa\)-complete, 
        \(\rho\geq \kappa\) and \(\rho\) is measurable.
        Moreover, \(W\) is \(\rho\)-decomposable, so
        since \(W\) is \((j_D(\delta),j_D(\gamma))\)-indecomposable
        for all \(\gamma < \lambda\), \(\rho\) must be greater than \(j_D(\gamma)\)
        for all \(\gamma < \lambda\). On the other hand, since \(W\) is 
        an \(M_D\)-ultrafilter over \(\lambda\), \(W\) is \(\gamma\)-indecomposable for all
        \(M_D\)-cardinals greater than \(\lambda\), and hence \(\rho \leq j_D(\lambda)\).

        Assume first that \(\lambda\) has cofinality at least 
        \(\delta\). Then \(j_D(\lambda) = \sup j_D[\lambda]\).
        This is a general fact; to see it let \(\iota\) be the underlying set of \(D\), 
        and note that any \(\alpha < j_D(\lambda)\) is equal to \([f]_U\) for some function
        \(f : \iota\to \lambda\), and \(\text{ran}(f)\subseteq \beta\) for some \(\beta < \lambda\)
        since \(\iota < \delta\leq \text{cf}(\lambda)\), so \(\alpha < j_D(\beta)\). 
        Since \(j_D(\lambda) = \sup j_D[\lambda]\) and \(j_D(\gamma) < \rho \leq j_D(\lambda)\)
        for all \(\gamma < \lambda\), \(\rho = j_D(\lambda)\).
        Therefore \(j_D(\lambda)\) is measurable in \(M_D\),
        and so \(\lambda\) is measurable by elementarity.
        This proves the theorem in the case that \(\lambda\) has cofinality at least \(\delta\).

        Otherwise, \(\lambda\) has cofinality less than \(\delta\).
        We finish by proving that in this case \(\lambda\) is a limit
        of measurable cardinals. Since \(\rho\) is regular in \(M_D\), \(\rho \neq j_D(\lambda)\).
        Therefore \(\rho < j_D(\lambda)\).  This is a standard reflection argument. 
        Suppose \(\gamma < \lambda\). We will show that there is a measurable cardinal
        between \(\gamma\) and \(\lambda\).
        Of course, \(M_D\) satisfies that there is a measurable cardinal between \(j_D(\gamma)\)
        and \(j_D(\lambda)\), namely \(\rho\). Therefore by elementarity,
        there is a measurable cardinal between \(\gamma\) and \(\lambda\), as desired. 
    \end{proof}
\end{thm}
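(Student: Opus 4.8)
The plan is to reduce the statement to an internal question about the completeness of an ultrafilter living inside a small ultrapower, using the factorization theorem \cref{InternalDecomp}, and then to read off a measurable cardinal and transport it back by elementarity. First I would note that since \(\kappa\leq\lambda\), any \((\delta,\lambda)\)-indecomposable ultrafilter \(U\) over \(\lambda\) is in particular \((\delta,\kappa)\)-indecomposable, so the hypotheses of \cref{InternalDecomp} are met and we obtain a factorization \(j_U = (j_W)^{M_D}\circ j_D\), where \(D\) is an ultrafilter over a cardinal below \(\delta\) and \(W\) is a genuine \(\kappa\)-complete ultrafilter of \(M_D\) over \(j_D(\lambda)\). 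All remaining work takes place in \(M_D\).

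The key step, and the one I expect to be the main obstacle, is transferring the indecomposability of \(U\) into \emph{strong} indecomposability of \(W\) inside \(M_D\): that \(W\) is \((j_D(\delta),j_D(\gamma))\)-indecomposable in \(M_D\) for every \(\gamma<\lambda\). I would argue by contradiction. Given an \(\eta\)-decomposition \(f:j_D(\lambda)\to\eta\) of \(W\) in \(M_D\), let \(\gamma\) be least with \(j_D(\gamma)\geq\eta\); the goal is to force \(\eta<j_D(\delta)\), i.e.\ to show \(\gamma<\delta\). Supposing instead \(\delta\leq\gamma<\lambda\), I would represent \([f]^{M_D}_W\) as \([g]_U\) for some \(g:\lambda\to\gamma\) (legitimate since \([f]^{M_D}_W<j^{M_D}_W(\eta)\leq j^{M_D}_W(j_D(\gamma))=j_U(\gamma)\)), and then use the \(\gamma\)-indecomposability of \(U\) to find \(A\in U\) with \(|g[A]|<\gamma\). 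Minimality of \(\gamma\) gives \(j_D(|g[A]|)<\eta\), while \L o\'s's theorem applied to \([f]^{M_D}_W=j_U(g)([\mathrm{id}]_U)\in j_U(g[A])=j^{M_D}_W(j_D(g[A]))\) forces \(f[B]\subseteq j_D(g[A])\) for some \(W\)-large \(B\), so \(|f[B]|^{M_D}\leq j_D(|g[A]|)<\eta\), contradicting that \(f\) is an \(\eta\)-decomposition. The delicate point is keeping straight which ultrafilter's indecomposability is invoked and checking that the representation and pullback interact correctly with the factorization; this is where I would be most careful.

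With strong indecomposability in hand, I would let \(\rho\) be the completeness of \(W\) computed in \(M_D\) (the least \(M_D\)-cardinal at which \(\rho\)-completeness fails). Since \(W\) is \(\kappa\)-complete and \(\kappa>\omega\), \(W\) is countably complete, so \(\rho\geq\kappa\) and \(\rho\) is measurable in \(M_D\), as the completeness of any countably complete ultrafilter is measurable. Because \(\kappa\) is a strong limit we have \(j_D(\delta)<\kappa\leq\rho\); since \(W\) is \(\rho\)-decomposable but \((j_D(\delta),j_D(\gamma))\)-indecomposable for every \(\gamma<\lambda\), this yields \(j_D(\gamma)\leq\rho\) for all \(\gamma<\lambda\), while the fact that \(W\) is a uniform ultrafilter over \(j_D(\lambda)\) gives \(\rho\leq j_D(\lambda)\).

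Finally I would split on \(\mathrm{cf}(\lambda)\). If \(\mathrm{cf}(\lambda)\geq\delta\), then \(j_D\) is continuous at \(\lambda\) (as \(D\) is over a set of size \(<\delta\leq\mathrm{cf}(\lambda)\), so every \(f:\mathrm{dom}(D)\to\lambda\) is bounded), whence \(j_D(\lambda)=\sup j_D[\lambda]\leq\rho\leq j_D(\lambda)\) and \(\rho=j_D(\lambda)\); thus \(j_D(\lambda)\) is measurable in \(M_D\) and \(\lambda\) is measurable by elementarity. If \(\mathrm{cf}(\lambda)<\delta\), then \(\lambda\) is singular, so \(j_D(\lambda)\) is singular in \(M_D\); since \(\rho\) is regular, \(\rho<j_D(\lambda)\), and for each \(\gamma<\lambda\) the measurable \(\rho\) witnesses in \(M_D\) a measurable cardinal in the interval \([j_D(\gamma),j_D(\lambda))\), which reflects to a measurable cardinal in \([\gamma,\lambda)\). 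As \(\gamma\) was arbitrary, \(\lambda\) is a limit of measurable cardinals, giving exactly the stated dichotomy.
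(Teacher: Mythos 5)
Your proposal is correct and follows essentially the same route as the paper's own proof: factor \(j_U\) via \cref{InternalDecomp}, transfer the indecomposability of \(U\) to \((j_D(\delta),j_D(\gamma))\)-indecomposability of \(W\) in \(M_D\) by the same \L o\'s-theorem contradiction, locate the completeness \(\rho\) of \(W\), and split on \(\textnormal{cf}(\lambda)\). Your write-up even tightens two points the paper leaves implicit: invoking the indecomposability of \(U\) (not \(W\)) to get \(A\in U\) with \(|g[A]|<\gamma\), and using the strong limit hypothesis to justify \(j_D(\delta)<\kappa\leq\rho\).
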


We state a special case which answers Silver's question above the least strongly
compact cardinal:
\begin{thm}Suppose \(\lambda\) is greater than or equal to
    the least strongly compact cardinal and carries an indecomposable
    ultrafilter. Either \(\lambda\) is measurable or else
    \(\textnormal{cf}(\lambda) = \omega\) and \(\lambda\) is a limit of
    measurable cardinals.\qed
\end{thm}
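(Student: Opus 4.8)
The plan is to obtain this statement as a direct specialization of \cref{IndecompChar0}, taking $\delta = \omega_1$ and letting $\kappa$ be the least strongly compact cardinal. Recall from the introduction that an indecomposable ultrafilter on $\lambda$ is by definition $(\omega_1,\lambda)$-indecomposable, so the hypothesis that $\lambda$ carries an indecomposable ultrafilter furnishes exactly an $(\omega_1,\lambda)$-indecomposable ultrafilter over $\lambda$, matching the indecomposability requirement of \cref{IndecompChar0} with $\delta = \omega_1$.

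Next I would verify the remaining hypotheses of \cref{IndecompChar0}. Since $\kappa$ is strongly compact it is inaccessible, hence a strong limit cardinal, and since every strongly compact cardinal is measurable we have $\omega_1 < \kappa$; together with the assumption $\kappa \leq \lambda$ this gives $\delta < \kappa \leq \lambda$. The crucial filter-extension hypothesis is immediate from strong compactness: by definition every $\kappa$-complete filter, in particular every $\kappa$-complete filter over $\lambda$, extends to a $\kappa$-complete ultrafilter, and since $\kappa > \omega_1$ such an ultrafilter is $\omega_1$-complete, i.e.\ $\delta$-complete. Thus all hypotheses of \cref{IndecompChar0} are in place.

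Applying \cref{IndecompChar0} then yields that either $\lambda$ is measurable, or $\mathrm{cf}(\lambda) < \omega_1$ and $\lambda$ is a limit of measurable cardinals. The final step is to observe that because $\lambda \geq \kappa > \omega$, the cardinal $\lambda$ is uncountable, so $\mathrm{cf}(\lambda)$ is an infinite cardinal; hence the condition $\mathrm{cf}(\lambda) < \omega_1$ is equivalent to $\mathrm{cf}(\lambda) = \omega$. This produces exactly the stated dichotomy.

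Since the argument is purely a matter of instantiating a more general theorem, there is no genuine obstacle; the only point deserving any care is confirming that the $\delta = \omega_1$ completeness demanded of the extending ultrafilter is actually weaker than the $\kappa$-completeness that strong compactness delivers, which holds precisely because the least strongly compact cardinal lies far above $\omega_1$.
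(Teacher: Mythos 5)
Your proposal is correct and is exactly the paper's intent: the paper states this theorem with no separate proof, treating it as the immediate instantiation of \cref{IndecompChar0} with \(\delta = \omega_1\) (since an indecomposable ultrafilter is by definition a uniform \((\omega_1,\lambda)\)-indecomposable one) and \(\kappa\) the least strongly compact cardinal. Your verification of the hypotheses and the observation that \(\textnormal{cf}(\lambda) < \omega_1\) means \(\textnormal{cf}(\lambda) = \omega\) for the uncountable cardinal \(\lambda\) are precisely the routine checks being elided.
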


From \cref{InternalDecomp},  one can extract a topological 
characterization of indecomposable
ultrafilters above the least strongly compact cardinal. Recall that
\(\beta(\lambda)\) denotes the space of 
ultrafilters on \(\lambda\) with the Stone-\v Cech topology.
\begin{thm}\label{IndecompChar1} Suppose \(\lambda\) is greater than or equal to
    the least strongly compact cardinal and \(U\) is an ultrafilter over \(\lambda\). 
    Then the following are equivalent:
    \begin{itemize}
        \item \(U\) is indecomposable.
        \item Either \(U\) is \(\lambda\)-complete or
            \(U\) lies in the closure of a countable discrete
            set of ultrafilters \(S\subseteq \beta(\lambda)\) such that
            for any \(\gamma < \lambda\), all but finitely many
            ultrafilters in \(S\) are \(\gamma^+\)-complete.\qed
    \end{itemize}
\end{thm}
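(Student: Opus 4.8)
The plan is to run the topological condition through the standard correspondence between closures of countable discrete sets in \(\beta(\lambda)\) and \(D\)-limits of sequences: if \(S=\{W_n:n<\omega\}\) is discrete and \(U\in\overline{S}\), then \(n\mapsto W_n\) extends to a continuous surjection \(\beta\omega\to\overline{S}\), so \(U\) is the image of some \(D\in\beta\omega\), i.e.\ \(U=D\textnormal{-}\lim_n W_n\), meaning \(A\in U\iff\{n:A\in W_n\}\in D\); conversely every such \(D\)-limit lies in \(\overline{S}\). Thus the theorem reduces to showing that the non-\(\lambda\)-complete indecomposable \(U\) are exactly the \(D\)-limits \(\langle W_n\rangle\) whose completenesses tend to \(\lambda\), so that for each \(\gamma<\lambda\) all but finitely many \(W_n\) are \(\gamma^+\)-complete.

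\textbf{Backward direction.} If \(U\) is \(\lambda\)-complete it is trivially \((\omega_1,\lambda)\)-indecomposable. Otherwise fix a witnessing discrete \(S=\{W_n\}\) with the completeness property and write \(U=D\textnormal{-}\lim W_n\) with \(D\) nonprincipal. Given a partition \(\langle A_\nu\rangle_{\nu<\alpha}\) with \(\alpha<\lambda\), set \(\gamma=|\alpha|\); since all but finitely many \(W_n\) are \(\gamma^+\)-complete, the set \(T=\{n:\text{some }A_\nu\in W_n\}\) is cofinite and hence \(T\in D\). For \(n\in T\) let \(\nu(n)\) be the unique index with \(A_{\nu(n)}\in W_n\), and put \(\sigma=\{\nu(n):n\in T\}\), a countable subset of \(\alpha\). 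Then \(\bigcup_{\nu\in\sigma}A_\nu\supseteq A_{\nu(n)}\in W_n\) for every \(n\in T\), so \(\bigcup_{\nu\in\sigma}A_\nu\in U\), exhibiting a countable subfamily with union in \(U\). The decisive feature here is that ``all but finitely many'' forces \(T\) to be cofinite and therefore \(D\)-large, independently of which \(D\) realizes the limit.

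\textbf{Forward direction.} Suppose \(U\) is indecomposable but not \(\lambda\)-complete and let \(\kappa\) be the least strongly compact cardinal. Since \(\kappa\) is a strong limit and every \(\kappa\)-complete filter extends to a \(\kappa\)-complete, hence \(\omega_1\)-complete, ultrafilter, and since \(U\) is \((\omega_1,\kappa)\)-indecomposable, \cref{InternalDecomp} (with \(\delta=\omega_1\)) gives \(j_U=(j_W)^{M_D}\circ j_D\), where \(D\) is an ultrafilter on a countable set and \(W\) is a \(\kappa\)-complete ultrafilter of \(M_D\) over \(j_D(\lambda)\). Writing \(W=[\langle W_n\rangle_n]_D\) by \L o\'s's theorem and using the identification of seeds in the factorization, one reads off \(A\in U\iff j_D(A)\in W\iff\{n:A\in W_n\}\in D\), that is \(U=D\textnormal{-}\lim_n W_n\); the completeness analysis of \cref{IndecompChar0} shows the completeness \(\rho\) of \(W\) in \(M_D\) satisfies \(j_D(\gamma)<\rho\) for every \(\gamma<\lambda\), so \(\{n:W_n\text{ is }\gamma^+\text{-complete}\}\in D\) for each \(\gamma<\lambda\). (A short argument rules out \(D\) being principal, since a single \(\kappa\)-complete ultrafilter of completeness \(\rho<\lambda\) is \(\rho\)-decomposable and hence not indecomposable.)

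\textbf{The main obstacle} is upgrading this to a genuinely discrete witnessing family with the uniform completeness condition: the factorization only yields that \(W_n\) is \(\gamma^+\)-complete for \(D\)-almost all \(n\), whereas the topological statement demands it for all but finitely many members of \(S\). One cannot in general thin \(\langle W_n\rangle\) to a \(D\)-positive subsequence on which the completeness increases honestly, as that would require \(D\) to be a P-point. The route I would take is to pass to the filter reformulation \(U\in\overline{\{V_k\}}\iff\bigcap_k V_k\subseteq U\), and then construct directly, for a cofinal sequence \(\langle\gamma_k\rangle\) (supplied by \(\operatorname{cf}(\lambda)=\omega\) in the non-measurable case of \cref{IndecompChar0}, while in the measurable case the \(W_n\) are already \(\lambda\)-complete so the completeness condition is automatic), ultrafilters \(V_k\) that are \(\gamma_k^+\)-complete and satisfy \(\bigcap_k V_k\subseteq U\). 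Since \(V_k\) is \(\gamma_k^+\)-complete, all but finitely many members of \(\{V_k\}\) are \(\gamma^+\)-complete for each fixed \(\gamma<\lambda\); arranging the \(V_k\) to concentrate on pairwise disjoint sets makes \(S=\{V_k\}\) discrete; and \(\bigcap_k V_k\subseteq U\) places \(U\) in \(\overline{S}\). Extracting these increasingly complete ultrafilters lying \emph{below} \(U\) from the internal ultrafilter \(W\) and its completeness \(\rho\), without appealing to special properties of \(D\), is the crux of the argument.
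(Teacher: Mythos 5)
Your setup (the correspondence between closures of countable discrete sets in \(\beta(\lambda)\) and \(D\)-limits), your backward direction, and your forward-direction use of \cref{InternalDecomp} together with \L o\'s's Theorem to write \(U = D\text{-}\lim_n W_n\) with \(\{n : W_n \text{ is } \gamma^+\text{-complete}\}\in D\) for every \(\gamma < \lambda\) are all correct; for what it is worth, the paper supplies no argument at all for this theorem (it is stated with a terminal qed as something one ``can extract'' from \cref{InternalDecomp}), and what you have written is exactly that extraction, carried as far as it actually goes. But the step you yourself label ``the crux'' --- upgrading ``\(\gamma^+\)-complete for \(D\)-almost all \(n\)'' to ``\(\gamma^+\)-complete for all but finitely many members of \(S\)'' --- is never carried out, so as it stands the proposal is a proof of a different (weaker) statement, not of the theorem.

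The more important point is that your P-point diagnosis is not merely an obstacle to the thinning strategy: it kills the statement itself in its only nontrivial reading. (First note that if \(U\in S\) is permitted, the second bullet holds of \emph{every} ultrafilter --- take \(S = \{U\}\cup\{\text{principal ultrafilters at countably many points avoided by } U\}\), which is countable, discrete, and has at most one non-\(\gamma^+\)-complete member --- so the theorem must be read as requiring \(U\) to be a limit point of \(S\).) Now take \(\lambda = \sup_k \kappa_k\) with \(\kappa_k\) measurable and increasing above the strongly compact, let \(D = E\otimes E'\) be a product ultrafilter on \(\omega\) (so \(D\) is not a P-point, witnessed by the partition \(\langle J_k\rangle\) into infinite \(D\)-null columns), and for \(n\in J_k\) let \(W_n\) be pairwise distinct \(\kappa_k\)-complete uniform ultrafilters concentrating on \(\kappa_k\setminus\kappa_{k-1}\). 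Then \(U = D\text{-}\lim_n W_n\) is indecomposable (your backward-direction argument needs only \(D\)-largeness, which holds here), but no honest \(S\) exists: using the \(\kappa_k\)-completeness of the \(W_n\) one can realize \emph{any} \(X\in D\) as \(\{n : A\in W_n\}\) for some \(A\in U\), choosing \(A\) band by band so that it also avoids any prescribed countable set of points and any prescribed countable family of \(\omega_1\)-complete ultrafilters distinct from the relevant \(W_n\); the honesty condition forces the members of a candidate \(S\) to be principal, or \(\omega_1\)-complete and concentrated on a single band with only finitely many per band, or countably incomplete with only finitely many in total, and the sets \(A\in U\) built to evade the first two kinds form an intersection-closed family generating \(U\), hence cannot be covered by finitely many ultrafilters unless one of them \emph{is} \(U\). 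So the ``all but finitely many'' form is refutable, and what is actually provable --- and what your forward direction proves --- is the \(D\)-limit characterization that the paper's introduction and suppressed draft theorem describe: \(U\) is either \(\lambda\)-complete or a \(D\)-limit of \(\kappa_n\)-complete ultrafilters over measurable cardinals \(\kappa_n\), with completeness tending to \(\lambda\) only in the \(D\)-almost-all sense. With the theorem restated that way, your argument is complete; as stated, the gap you isolated cannot be closed by anyone.
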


\section{Almost strong compactness}\label{AlmostSection}
The principles of Bagaria-Magidor laid out in 
\cref{CompactnessSection} offer a spectrum of strong compactness 
properties. In a perfect world (for example, assuming UA), these
would be characterized in terms of the classical notion of strong compactness. 
But given Bagaria-Magidor's theorem \cite{MagidorBagaria}
that it is consistent with ZFC that the first \(\omega_1\)-strongly compact
cardinal is singular, it is natural to wonder whether there are any nontrivial 
relationships between these principles at all. The results of this section
show that there are subtle implications between 
classical strong compactness and Bagaria-Magidor's notion of almost strong compactness.
\subsection{Decomposability spectra}
Our results in this section make use of the observation
that assuming the Singular Cardinals Hypothesis, countably complete ultrafilters have
very simple {\it decomposability spectra,} a concept first studied by Lipparini.
\begin{defn}\label{KDDef} If \(U\) is an ultrafilter, the {\it decomposability
    spectrum of \(U\)}, denoted \(K_U\), is the set of all cardinals \(\lambda\)
    such that \(U\) is \(\lambda\)-decomposable.
\end{defn}

We use the following theorem of Lipparini:
\begin{thm}[Lipparini]\label{ProductDecomp} Suppose \(U\) is an ultrafilter and
\((\lambda_\alpha)_{\alpha < \eta}\) is an increasing sequence of infinite
cardinals in \(K_D\). Then there is a cardinal \(\delta\in K_D\) with
\(\sup_{\alpha < \eta} \lambda_{\alpha}\leq \delta \leq \prod_{\alpha < \eta}
\lambda_{\alpha}\).
\begin{proof}
For each \(\alpha < \eta\), choose a \(\lambda_\alpha\)-decomposition
\(f_\alpha\) of \(U\). Thus \(f_\alpha\) is a function from the underlying set
\(X\) of \(U\) to \(\lambda_\alpha\). Define \(f : X\to \prod_{\alpha < \eta}
\lambda_\alpha\) by \(f(x) = (f_\alpha(x))_{\alpha < \eta}\).

Fix \(A\in U\) such that  
\(|f[A]| = \delta\) is as small as possible. Note that \(\delta \leq
\prod_{\alpha < \eta} \lambda_{\alpha}\). Moreover, for all \(\alpha < \eta\),
\(\delta \geq \lambda_\alpha\): for \(A\in U\), \(|f[A]| \geq |f_\alpha[A]|\geq
\lambda_\alpha\) since \(f_\alpha\) is a \(\lambda_\alpha\)-decomposition.

Let \(p : \prod_{\alpha < \eta} \lambda_\alpha\to \delta\) be injective on \(A\)
and \(0\) on the complement of \(A\). Then \(p\circ f\) is a
\(\delta\)-decomposition of \(U\).
\end{proof}
\end{thm}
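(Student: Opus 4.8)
The plan is to manufacture a single ``product'' decomposition from the given sequence and then minimize its image over $U$-large sets. Since each $\lambda_\alpha$ lies in the decomposability spectrum $K_U$ (\cref{KDDef}), the first step is to fix for each $\alpha<\eta$ a $\lambda_\alpha$-decomposition $f_\alpha : X\to\lambda_\alpha$, where $X$ is the underlying set of $U$, and to assemble these into the product map $f : X\to\prod_{\alpha<\eta}\lambda_\alpha$ given by $f(x)=\langle f_\alpha(x)\rangle_{\alpha<\eta}$. The witness cardinal $\delta$ will then be obtained by choosing $A\in U$ so that $|f[A]|$ is as small as possible and setting $\delta=|f[A]|$.

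Next I would pin down the two inequalities. The upper bound $\delta\leq\prod_{\alpha<\eta}\lambda_\alpha$ is immediate, since $f[A]$ is a subset of the product. For the lower bound, I would observe that the $\alpha$-th coordinate projection recovers $f_\alpha$ from $f$, so that $|f[A]|\geq|f_\alpha[A]|=\lambda_\alpha$ for every $\alpha$, the last equality because $f_\alpha$ is a $\lambda_\alpha$-decomposition and $A\in U$. Taking the supremum gives $\sup_{\alpha<\eta}\lambda_\alpha\leq\delta$, placing $\delta$ in the required interval.

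The remaining and most delicate step is to check that $\delta\in K_U$, i.e.\ to produce a genuine $\delta$-decomposition of $U$. My candidate is $p\circ f$, where $p:\prod_{\alpha<\eta}\lambda_\alpha\to\delta$ is chosen injective on $f[A]$ and constantly $0$ elsewhere. The point I expect to demand the most care is showing that $p\circ f$ has image of full size $\delta$ on \emph{every} $B\in U$, not merely on $A$; this is precisely where the minimality of $|f[A]|$ must be threaded through. For any $B\in U$ we have $A\cap B\in U$, so by the choice of $A$ the image $f[A\cap B]$ has size at least $\delta$, while $f[A\cap B]\subseteq f[A]$ forces it to have size exactly $\delta$. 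Since $p$ is injective on $f[A]\supseteq f[A\cap B]$, the composite $p\circ f$ sends $A\cap B$ onto a set of size $\delta$, whence $|(p\circ f)[B]|=\delta$. This confirms that $p\circ f$ is a $\delta$-decomposition and finishes the proof. I do not anticipate any real obstruction beyond correctly propagating the minimality of $|f[A]|$ to arbitrary $B\in U$ via the intersection $A\cap B$; everything else is routine bookkeeping with the pushforward description of \cref{PushDef}.
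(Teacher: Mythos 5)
Your proof is correct and follows essentially the same route as the paper's: the same product map $f$, the same minimization of $|f[A]|$ over $U$-large sets, and the same collapsing map $p$ injective on $f[A]$. The only difference is that you spell out the final verification (propagating minimality to arbitrary $B\in U$ via $A\cap B$) that the paper leaves implicit, and you state the injectivity requirement on $p$ correctly as injectivity on $f[A]$ rather than the paper's slightly sloppy ``injective on $A$.''
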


\begin{lma}[SCH]\label{SimpleSpectrum} Suppose \(U\) is a countably complete
ultrafilter. Suppose \(K_U\) is unbounded below a limit cardinal \(\lambda\).
Then all sufficiently large regular cardinals less than \(\lambda\) belong to
\(K_U\).
\end{lma}

For this, we need a well-known fact, a special case of a more general theorem of
Ketonen \cite{Ketonen}. For one approach, see \cite[Theorem 7.2.12]{UA}.
\begin{lma}\label{KetonenSpectrum} Suppose \(U\) is a \(\theta^+\)-complete
ultrafilter, \(\lambda\) is a singular cardinal of cofinality \(\theta\), and
\(\lambda^+\in K_U\). Then all sufficiently large regular cardinals below
\(\lambda\) are in \(K_U\).\qed
\end{lma}

\begin{proof}[Proof of \cref{SimpleSpectrum}]
We first handle the case in which \(\lambda\) has countable cofinality. Assume
towards a contradiction that the lemma fails. Let \((\lambda_n)_{n < \omega}\)
be a sequence of cardinals unbounded in \(\lambda\) such that \(\lambda_n \notin
K_U\) for all \(n < \omega\). By \cref{ProductDecomp} there is some \(\delta\in
K_U\) with \(\sup_{n < \omega} \lambda_n \leq \delta \leq \prod_{n < \omega}
\lambda_n\). Since \(U\) is countably complete, \(\delta \neq \sup_{n < \omega}
\lambda_n\). Note however that \(\prod_{n < \omega} \lambda_n \leq
\lambda^\omega = \lambda^+\) by SCH. Therefore \(\delta = \lambda^+\). Since
\(U\) is countably complete, \(\lambda\) has countable cofinality, and
\(\lambda^+\in K_U\), \cref{KetonenSpectrum} implies that all sufficiently large
regular cardinals less than \(\lambda\) belong to \(K_U\).

Now we take on the case that \(\lambda\) has uncountable cofinality. Let \(S\)
be the set of limit points of \(K_U\) of countable cofinality. Then \(S\) is
\(\omega\)-closed unbounded in \(\lambda\). Define \(f : S\to \lambda\) by
setting \(f(\alpha)\) equal to the least \(\gamma < \alpha\) such that every
regular cardinal between \(\gamma\) and \(\alpha\) belongs to \(K_U\). Note that
there is such a cardinal \(\gamma\) by the previous case. The function \(f\) is
nondecreasing and regressive, so there is some \(\gamma < \kappa\) such that
\(f(\alpha) =\gamma\) for all sufficiently large \(\alpha < S\). In other words,
every regular cardinal between \(\gamma\) and \(\kappa\) belongs to \(K_U\), as
desired.
\end{proof}

\subsection{On the next almost strongly compact cardinal}
To discover the nontrivial relationships between compactness principles,
one must first dispense with the trivial ones. 
For example, any limit of strongly compact cardinals is almost
strongly compact. One is therefore led to ask whether every almost strongly compact
cardinal is either strongly compact or a limit of strongly compacts. This is
provable under the Ultrapower Axiom (\cite[Proposition 8.3.7]{UA}), but it is
conceivable that this hypothesis is unnecessary.

There is an easy characterization of precisely those almost strongly compact
cardinals that are strongly compact, essentially due to Menas, although he
proved it before the concept of an almost strongly compact cardinal had been
formulated:
\begin{thm}[Menas]\label{Menas} An almost strongly compact cardinal is strongly
compact if and only if it is measurable.\qed
\end{thm}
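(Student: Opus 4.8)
The statement is an equivalence, so the plan is to treat the two directions separately; the forward direction is essentially a definitional triviality and the reverse direction carries all the content.

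For the forward direction, suppose $\kappa$ is strongly compact. Then in particular $\kappa$ is $(\kappa,\kappa)$-strongly compact, so applying the fourth characterization in \cref{KetonenThm} with $\delta = \lambda = \kappa$, the only regular cardinal in the interval $[\kappa,\kappa]$, namely $\kappa$ itself, carries a $\kappa$-complete uniform ultrafilter. A uniform ultrafilter over $\kappa$ is nonprincipal, so this witnesses that $\kappa$ is measurable. This step is immediate and I would dispatch it in a sentence.

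The reverse direction is where I would spend the effort. Assume $\kappa$ is almost strongly compact and measurable, and fix a $\kappa$-complete nonprincipal ultrafilter $W$ over $\kappa$; note that $W$ is then uniform, so it contains every tail $\{\gamma < \kappa : \gamma > \mu\}$ for $\mu < \kappa$. To verify strong compactness it suffices to produce, for an arbitrary $\lambda \geq \kappa$, a $\kappa$-complete fine ultrafilter over $P_\kappa(\lambda)$. Using almost strong compactness (i.e. $(\gamma,\lambda)$-strong compactness for each $\gamma < \kappa$), I would choose for every $\gamma < \kappa$ a $\gamma$-complete fine ultrafilter $U_\gamma$ over $P_\kappa(\lambda)$, and then form the $W$-limit
\[ U = \{A \subseteq P_\kappa(\lambda) : \{\gamma < \kappa : A \in U_\gamma\} \in W\}. \]
That $U$ is an ultrafilter is routine, since for each $A$ the sets $\{\gamma : A \in U_\gamma\}$ and $\{\gamma : P_\kappa(\lambda) \setminus A \in U_\gamma\}$ partition $\kappa$ with exactly one lying in $W$; and fineness of $U$ is immediate, because for each $\alpha < \lambda$ the fineness of every $U_\gamma$ gives $\{\gamma < \kappa : \{\sigma : \alpha \in \sigma\} \in U_\gamma\} = \kappa \in W$.

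The heart of the argument, and the step I expect to be the main obstacle, is checking $\kappa$-completeness of $U$, since this is the one place the measurability of $\kappa$ is genuinely used. Given a sequence $\langle A_\nu \rangle_{\nu < \mu}$ of sets in $U$ with $\mu < \kappa$, I would set $B_\nu = \{\gamma < \kappa : A_\nu \in U_\gamma\} \in W$ and then exploit both features of $W$ at once: $\kappa$-completeness of $W$ to intersect the $\mu$ many sets $B_\nu$, and uniformity of $W$ to keep the tail above $\mu$, so that $T = \{\gamma > \mu\} \cap \bigcap_{\nu < \mu} B_\nu$ lies in $W$. For $\gamma \in T$ one has $A_\nu \in U_\gamma$ for all $\nu < \mu$, and since $\gamma > \mu$ the $\gamma$-completeness of $U_\gamma$ yields $\bigcap_{\nu < \mu} A_\nu \in U_\gamma$; hence $\{\gamma : \bigcap_{\nu < \mu} A_\nu \in U_\gamma\} \supseteq T \in W$ and $\bigcap_{\nu < \mu} A_\nu \in U$. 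This is precisely the trick that turns ultrafilters of every completeness below $\kappa$ into a single fully $\kappa$-complete one, and without a $\kappa$-complete measure on $\kappa$ there is no mechanism to glue the $U_\gamma$ together this way — which is the content Menas's theorem isolates. Since $\lambda$ was arbitrary, $\kappa$ is then $(\kappa,\infty)$-strongly compact, completing the proof.
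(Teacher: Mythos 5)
Your proof is correct, but there is nothing in the paper to compare it against: the paper states this theorem without proof, attributing it to Menas, so the relevant comparison is with the classical argument --- and your reverse direction is exactly that argument. Gluing the \(\gamma\)-complete fine ultrafilters \(U_\gamma\) over \(P_\kappa(\lambda)\) with a \(\kappa\)-complete nonprincipal \(W\) over \(\kappa\) via the \(W\)-limit, and then verifying \(\kappa\)-completeness by intersecting the sets \(B_\nu\) inside \(W\) and restricting to the tail \(\{\gamma > \mu\}\) so that the \(\gamma\)-completeness of each \(U_\gamma\) can absorb a \(\mu\)-fold intersection, is precisely Menas's construction; your identification of this step as the sole place measurability enters is also the right diagnosis.

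One caveat in the forward direction: invoking the fourth clause of \cref{KetonenThm} with \(\delta = \lambda = \kappa\) yields a uniform \(\kappa\)-complete ultrafilter on \(\kappa\) only if \(\kappa\) is regular, and you assume this silently. Since regularity of strongly compact cardinals is usually deduced from their measurability --- the very thing you are proving --- this step as written has a whiff of circularity. It is easily patched without measurability: if \(\textnormal{cf}(\kappa) = \mu < \kappa\), fix \(\langle \kappa_i : i < \mu\rangle\) cofinal in \(\kappa\) and a \(\kappa\)-complete fine ultrafilter \(\mathcal U\) over \(P_\kappa(\kappa)\); by \(\kappa\)-completeness some set \(\{\sigma : |\sigma| < \kappa_i\}\) lies in \(\mathcal U\), while fineness plus \(\kappa\)-completeness puts \(\{\sigma : S \subseteq \sigma\}\) in \(\mathcal U\) for any \(S \subseteq \kappa\) of size \(\kappa_i\), and these two sets are disjoint --- contradiction. (Alternatively, work with the ultrapower embedding \(j_{\mathcal U}\) directly: \(\kappa\)-completeness gives \(\textnormal{crit}(j_{\mathcal U}) \geq \kappa\), fineness forces \(j_{\mathcal U}(\kappa) > \kappa\), hence \(\textnormal{crit}(j_{\mathcal U}) = \kappa\) and the derived ultrafilter witnesses measurability.) With that sentence added, your proof is complete.
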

This theorem would seem to be optimal (since after all it is an equivalence). We
will show, however, that there are a priori weaker notions than measurability
that suffice to conclude that an almost strongly compact cardinal is strongly
compact.

\begin{prp}[SCH]\label{IfAlmostSC} Suppose \(\nu\) is a cardinal such that the least
\((\nu,\infty)\)-strongly compact cardinal \(\kappa\) is almost strongly
compact. Then \(\kappa\) is strongly compact.
\begin{proof}
By \cref{Menas}, it suffices to show that \(\kappa\) is measurable.
Let \(U\) be a \(\nu\)-complete uniform ultrafilter over \(\kappa^+\).
We claim that \(K_U\) is bounded below \(\kappa\). 
Assume \(K_U\) is unbounded, towards a contradiction.
By \cref{SimpleSpectrum} (using our SCH assumption), there
is some cardinal \(\eta < \kappa\) such that every regular cardinal \(\delta\)
with \(\eta\leq \delta < \kappa\) is in \(K_U\). 
In other words, \(U\) can be pushed forward to a uniform ultrafilter on \(\delta\)
for every regular cardinal \(\delta\)
with \(\eta\leq \delta < \kappa\). 
A pushforward of \(U\) is necessarily \(\nu\)-complete.
It follows that every regular cardinal greater than or equal to \(\eta\)
carries a \(\nu\)-complete uniform ultrafilter.
Therefore by Ketonen's Theorem
(\cref{KetonenThm}), \(\eta\) is \((\nu,\infty)\)-strongly compact, and this
contradicts the fact that \(\kappa\) is the least \((\nu,\infty)\)-strongly
compact cardinal.

Since \(K_U\) is bounded below \(\kappa\), we can now apply \cref{InternalDecomp} to
factor \(U\): thus \(j_U = (j_W)^{M_D}\circ j_D\) where \(D\) is an ultrafilter
on a cardinal \(\gamma < \kappa\) and \(W\in M_D\) is a uniform \(\kappa\)-complete
\(M_D\)-ultrafilter over \(j_D(\kappa^+)\). 

We will show that \(\kappa\) is regular. Given this, we can conclude the proposition by the
following argument. Since \(D\) lies on a cardinal less than \(\kappa\) and
\(\kappa\) is regular, \(j_D(\kappa) = \sup j_D[\kappa]\), and so 
every \(M_D\)-cardinal \(\delta < j_D(\kappa)\) has true
cardinality strictly less than \(\kappa\).
Therefore since \(W\) is \(\kappa\)-complete,
\(W\) is \(j_D(\kappa)\)-complete in \(M_D\). By elementarity,
\(\kappa^+\) carries a \(\kappa\)-complete uniform ultrafilter, as desired.

To finish, we show \(\kappa\) is regular. 
Suppose towards a contradiction that \(\kappa\) is singular, and
let \(\theta = \text{cf}(\kappa)\). Since \(M_D\) satisfies that \(W\) is
\(M_D\)-\(j_D(\theta^+)\)-complete and \(j_D(\kappa^+)\)-decomposable,
\((K_W)^{M_D}\) contains all sufficiently large \(M_D\)-regular cardinals below
\(j_D(\kappa)\) by \cref{KetonenSpectrum}. Therefore \(W\) witnesses that there
is a \((j_D(\nu),\infty)\)-strongly compact cardinal below \(j_D(\kappa)\) in
\(M_D\), contradicting the definition of \(\kappa\) (since \(j_D\) is
elementary).
\end{proof}
\end{prp}
At first glance, the proof appears to show that the least
\((\nu,\infty)\)-strongly compact cardinal is always strongly compact, but by a
theorem of \cite{MagidorBagaria}, one cannot prove (assuming ZFC + GCH) that the
least \((\omega_1,\infty)\)-strongly compact cardinal is {\it regular.} Where
does \cref{IfAlmostSC} use the almost strong compactness of \(\kappa\)? The
answer is that this hypothesis is required to apply \cref{InternalDecomp}.

\begin{thm}[SCH]\label{AlmostSCDichotomy} Suppose \(\kappa\) is an almost strongly
compact cardinal of uncountable cofinality. Then one of the following
holds:
\begin{itemize}
    \item \(\kappa\) is a strongly compact cardinal.
    \item \(\kappa\) is the successor of a strongly compact cardinal.
    \item \(\kappa\) is a limit of almost strongly compact cardinals.
\end{itemize}
\begin{proof}
    We may assume that \(\kappa\) is not a limit of almost strongly compact
    cardinals. We may also assume that \(\kappa\) is a limit cardinal. 
    
    Let \(\delta < \kappa\) be the supremum of the almost strongly
    compact cardinals below \(\kappa\).
    For each \(\alpha < \kappa\) with \(\alpha > \delta\), let \(f(\alpha)\) be
    the least cardinal \(\nu\) such that \(\alpha\) is not
    \((\nu,\infty)\)-strongly compact. The function \(f : \kappa\setminus
    \delta\to \kappa\) is regressive and nondecreasing, so since \(\kappa\) has
    uncountable cofinality, \(f\) assumes a constant value at all sufficiently
    large ordinals below \(\kappa\). In other words, there is a cardinal \(\nu <
    \kappa\) and an ordinal \(\alpha_0\) such that for all \(\alpha >
    \alpha_0\), \(f(\alpha) = \nu\). Thus \(\kappa\) is the least
    \((\nu,\infty)\)-strongly compact cardinal. By \cref{IfAlmostSC}, \(\kappa\)
    is strongly compact.
\end{proof}
\end{thm}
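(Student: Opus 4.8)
The plan is to reduce everything to a single application of \cref{IfAlmostSC}: if I can find a cardinal \(\nu\) for which \(\kappa\) is the \emph{least} \((\nu,\infty)\)-strongly compact cardinal, then, \(\kappa\) being almost strongly compact, \cref{IfAlmostSC} immediately gives that \(\kappa\) is strongly compact. So after discarding the cases corresponding to the other two alternatives, the entire task is to manufacture such a \(\nu\).

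First I dispose of two cases. If \(\kappa\) is a limit of almost strongly compact cardinals, the third alternative holds and we are done; so assume not, and set \(\delta\) to be the supremum of the almost strongly compact cardinals below \(\kappa\), noting \(\delta < \kappa\). Next suppose \(\kappa\) is a successor cardinal \(\mu^+\); then the first and third alternatives are impossible, since strongly compact cardinals and limits of almost strongly compact cardinals are all limit cardinals, so I must establish the second alternative, namely that \(\mu\) is strongly compact. I would do this by contradiction: if \(\mu\) is not strongly compact then \(\mu\) is not \((\mu,\infty)\)-strongly compact, and as no cardinal smaller than \(\mu\) can be \((\mu,\infty)\)-strongly compact, while \(\mu^+\) is \((\mu,\infty)\)-strongly compact by almost strong compactness, it follows that \(\mu^+\) is the least \((\mu,\infty)\)-strongly compact cardinal. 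Then \cref{IfAlmostSC} would force \(\mu^+\) to be strongly compact, which is absurd since strongly compact cardinals are inaccessible. Hence \(\mu\) is strongly compact, and we may assume \(\kappa\) is a limit cardinal.

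For the remaining case I define \(f \colon (\delta,\kappa) \to \kappa\) by letting \(f(\alpha)\) be the least cardinal \(\nu\) such that \(\alpha\) is not \((\nu,\infty)\)-strongly compact. Every \(\alpha\) in \((\delta,\kappa)\) lies above all almost strongly compact cardinals and so is not almost strongly compact; hence \(f(\alpha) < \alpha\), i.e.\ \(f\) is regressive. It is also nondecreasing: by the degeneracy of strong compactness, if \(\alpha < \beta\) and \(\alpha\) is \((\mu,\infty)\)-strongly compact then so is \(\beta\), whence \(f(\beta) \geq f(\alpha)\). The crucial combinatorial point is that, because \(\kappa\) has uncountable cofinality, a nondecreasing regressive \(f\) must be bounded below \(\kappa\): were \(f\) cofinal, I could recursively pick \(\delta < \beta_0 < \beta_1 < \cdots\) with \(f(\beta_{n+1}) > \beta_n\), and then \(\gamma = \sup_n \beta_n\), which lies below \(\kappa\) by uncountable cofinality, would satisfy \(f(\gamma) \geq \gamma\), contradicting regressiveness. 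Thus \(\nu^\ast := \sup_{\alpha \in (\delta,\kappa)} f(\alpha) < \kappa\).

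Finally, since \(\kappa\) is a limit cardinal I may fix a cardinal \(\nu\) with \(\max(\delta,\nu^\ast) < \nu < \kappa\). Then \(\kappa\) is \((\nu,\infty)\)-strongly compact (being almost strongly compact with \(\nu < \kappa\)), and it is the \emph{least} such: if \(\beta < \nu\) then \(\beta\) cannot be \((\nu,\infty)\)-strongly compact at all, while if \(\nu \leq \beta < \kappa\) then \(\beta > \delta\), so \(f(\beta) \leq \nu^\ast < \nu\), meaning \(\beta\) already fails to be \((f(\beta),\infty)\)-strongly compact and hence fails to be \((\nu,\infty)\)-strongly compact. So \(\kappa\) is the least \((\nu,\infty)\)-strongly compact cardinal, and \cref{IfAlmostSC} finishes the proof. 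I expect the main obstacle to be precisely the boundedness of \(f\): this is the only place uncountable cofinality is used, and it genuinely fails for cofinality \(\omega\) (where a nondecreasing regressive function can be cofinal), which is consistent with the theorem's restriction to uncountable cofinality.
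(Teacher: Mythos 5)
Your proof is correct, and its skeleton is the same as the paper's: the same function \(f\), the same use of uncountable cofinality, and the same final appeal to \cref{IfAlmostSC}. But the two write-ups differ at the crucial combinatorial step, and yours is the more careful one. The paper asserts that \(f\), being regressive and nondecreasing on a tail of a cardinal of uncountable cofinality, is \emph{eventually constant}; as a general fact this is false when \(\kappa\) is singular of uncountable cofinality. For instance, with \(\kappa = \aleph_{\omega_1 \cdot 2}\) and \(\delta = \aleph_{\omega_1}\), the function sending \(\alpha \in (\aleph_{\omega_1+\xi}, \aleph_{\omega_1+\xi+1}]\) to \(\aleph_\xi\) is regressive and nondecreasing but takes cofinally many distinct values. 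Since regularity of \(\kappa\) is not available at that stage of the argument (the singular case is exactly what the theorem is ruling out), the paper's step is a gap, albeit a reparable one. You instead prove only that \(f\) is bounded --- which is exactly what the \(\omega\)-chain argument yields, and is the only place uncountable cofinality is needed --- and then convert boundedness into the statement that \(\kappa\) is the least \((\nu,\infty)\)-strongly compact cardinal by choosing a cardinal \(\nu\) with \(\max(\delta,\nu^*) < \nu < \kappa\), using that \(\kappa\) is a limit cardinal. (One could even take \(\nu = \nu^*\) directly: every \(\beta \in (\delta,\kappa)\) fails to be \((f(\beta),\infty)\)-strongly compact and hence \((\nu^*,\infty)\)-strongly compact, and the failure propagates down to \(\beta \leq \delta\) by the upward persistence of \((\nu,\infty)\)-strong compactness.) A second difference: you spell out the reduction to the limit-cardinal case, which the paper leaves implicit; your argument --- if \(\mu\) were not strongly compact then \(\mu^+\) would be the least \((\mu,\infty)\)-strongly compact cardinal, so \cref{IfAlmostSC} would make the successor cardinal \(\mu^+\) strongly compact, contradicting the inaccessibility of strongly compact cardinals --- is a correct way to fill that in.
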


\begin{thm}\label{SuccessorSC} For any ordinal \(\alpha\),  if the
\((\alpha+1)\)-st almost strongly compact limit cardinal has uncountable cofinality,
it is  strongly compact.
\begin{proof}
Let \(\kappa\) be the \((\alpha+1)\)-st almost strongly compact limit
cardinal and assume that \(\kappa\) has uncountable cofinality.
Note that \(\kappa\) is not the least
almost strongly compact cardinal. We work in the collapse forcing extension \(N\) of \(V\)
in which the least strongly compact is countable. Notice that SCH holds in \(N\) 
since SCH holds above the least almost strongly compact cardinal in \(V\).
Therefore we can apply \cref{AlmostSCDichotomy} to conclude that \(\kappa\) is
strongly compact in \(N\). 
By L\'evy-Solovay, it follows that \(\kappa\) is strongly compact in \(V\).

If one wants to avoid forcing, one can just check that all the previous theorems go through
under the assumption that SCH holds at all sufficiently large cardinals below \(\kappa\).
\end{proof}
\end{thm}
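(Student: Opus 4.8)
The plan is to reduce \cref{SuccessorSC} to the dichotomy \cref{AlmostSCDichotomy}, which already settles the uncountable-cofinality case assuming SCH. Let $\kappa$ be the $(\alpha+1)$-st almost strongly compact limit cardinal and assume $\text{cf}(\kappa) > \omega$. Because $\kappa$ sits at a \emph{successor} stage of the enumeration, it is not the least almost strongly compact cardinal; writing $\kappa_0$ for that least cardinal, we have $\kappa_0 < \kappa$. By Solovay's theorem (\cref{AlmostSolovay}) SCH holds above $\kappa_0$, hence at all sufficiently large cardinals below $\kappa$. Inspecting the proofs of \cref{SimpleSpectrum}, \cref{IfAlmostSC}, and \cref{AlmostSCDichotomy}, every appeal to SCH is made at cardinals strictly below $\kappa$ and is absorbed by this local instance, so the machinery of \cref{AlmostSCDichotomy} applies to $\kappa$ outright. (Alternatively one collapses below $\kappa_0$ to force SCH globally in a generic extension $N$, applies \cref{AlmostSCDichotomy} in $N$, and transfers strong compactness back by L\'evy--Solovay, taking care that the collapse is small enough to preserve both the almost strong compactness and the uncountable cofinality of $\kappa$.)

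Applying \cref{AlmostSCDichotomy} to $\kappa$ yields three possibilities: $\kappa$ is strongly compact, $\kappa$ is the successor of a strongly compact cardinal, or $\kappa$ is a limit of almost strongly compact cardinals. The middle option is immediately excluded since $\kappa$ is a limit cardinal and hence not a successor. It remains to rule out that $\kappa$ is a limit of almost strongly compact cardinals, and this is exactly where the hypothesis that $\kappa$ occupies a successor stage is used.

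So suppose toward a contradiction that $\kappa$ is a limit of almost strongly compact cardinals, and let $C$ be the resulting cofinal set of almost strongly compact cardinals below $\kappa$. Since $\text{cf}(\kappa)$ is uncountable, $C$ has cofinally many limit points below $\kappa$; each such limit point $\mu$ is a supremum of unboundedly many almost strongly compact cardinals, hence is a limit cardinal and --- using Ketonen's characterization (\cref{KetonenThm}) to see that a limit of almost strongly compact cardinals is itself almost strongly compact --- an almost strongly compact \emph{limit} cardinal. Thus there are cofinally many almost strongly compact limit cardinals below $\kappa$, so $\kappa$ is the $\lambda$-th such cardinal for some \emph{limit} ordinal $\lambda$, contradicting that it is the $(\alpha+1)$-st. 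This excludes the third option and leaves only that $\kappa$ is strongly compact.

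I expect the main obstacle to be the organizational certification of the reduction: verifying that the SCH-dependent lemmas feeding \cref{AlmostSCDichotomy} truly use SCH only below $\kappa$ (so that Solovay's theorem supplies it), or, in the forcing formulation, exhibiting a collapse that forces global SCH while provably preserving that $\kappa$ remains almost strongly compact of uncountable cofinality. Once that is in place, the genuinely new combinatorial content is the short cofinality computation above, which converts ``successor stage in the enumeration'' into ``not a limit of almost strongly compact cardinals.''
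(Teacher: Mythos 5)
Your proof is correct and follows essentially the same route as the paper: both reduce the theorem to \cref{AlmostSCDichotomy} by securing SCH via Solovay's theorem (\cref{AlmostSolovay}) above the least almost strongly compact cardinal, and your two variants---localizing the SCH uses to a tail below \(\kappa\), or collapsing that cardinal and transferring strong compactness back by L\'evy--Solovay---are precisely the paper's closing remark and its main argument, respectively, with only the roles of primary and alternative swapped. The one point where you go beyond the paper is in explicitly eliminating the dichotomy's other two cases (the limit-point argument showing that if \(\kappa\) of uncountable cofinality were a limit of almost strongly compact cardinals, it would occupy a limit stage of the enumeration of almost strongly compact limit cardinals), a step the paper leaves entirely implicit when it asserts that the dichotomy yields strong compactness; this is a worthwhile filling-in of detail rather than a different proof.
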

\section{Cardinal preserving embeddings}\label{CardinalPreservingSection}
\subsection{Strong compactness and the Kunen inconsistency}
Kunen famously proved the inconsistency
of Reinhardt's ``ultimate large cardinal axiom''
asserting the existence of an elementary embedding from the universe of
sets to itself.
From a technical perspective,
the Kunen inconsistency places a bound on the degree of supercompactness
an elementary embedding \(j : V\to M\) can exhibit: there is always some 
\(\lambda < \kappa_\omega(j)\) such that \(M^\lambda \not \subseteq M\). Here
\(\kappa_\omega(j)\) is the supremum of the critical sequence of \(j\):
\begin{defn}
    Suppose \(j : M \to N\) is an elementary embedding between two transitive
    models of set theory. The {\it critical sequence of \(j\)} is the sequence
    \(\langle \kappa_n(j)\rangle_{n < \omega}\) defined by setting \(\kappa_n(j)
    = j^{(n)}(\text{crit}(j))\). The ordinal \(\kappa_\omega(j)\) is
    the supremum of the critical sequence of \(j\). 
\end{defn}
A natural (vague) question is whether there is a similar inconsistency theorem
for strong compactness, or in other words, a limitation on the covering
properties of inner models \(M\) such that there is an elementary embedding \(j
: V\to M\). 

For example, one might ask whether there can be an elementary embedding \(j :
V\to M\) where \(M\) is an inner model that has the \(\lambda\)-cover property
for all cardinals \(\lambda\); in other words, every \(A\subseteq M\) is
contained in some \(B\in M\) with \(|B| = |A|\).
The answer to this question, perhaps surprisingly, is yes. Suppose \(U\) is a
\(\kappa\)-complete ultrafilter over \(\kappa\). If \(2^\kappa > \kappa^+\),
then \(M_U\) does not have the \(\kappa^+\)-cover property, but assume instead
that the Generalized Continuum Hypothesis holds. Then \(M_U\) has the
\(\lambda\)-cover property for every cardinal \(\lambda\). To see this, it
suffices to see that for any cardinal \(\lambda\), \(j_U[\lambda]\) is covered
by a set \(A\in M_U\) with \(|A| = \lambda\). In fact, we can just take \(A =
\sup j_U[\lambda]\). 

A second question is whether there can be an elementary embedding \(j : V\to M\)
where \(M\) is an inner model with the {\it tight cover
property} at every cardinal: every \(A\subseteq M\) is contained in some \(B\in M\) with \(|B|^M = |A|\). 
(This is easily equivalent
to the question of whether there can be an embedding \(j : V\to M\) where \(M\)
is an inner model with the \(\lambda\)-cover property for all cardinals
\(\lambda\) as in the previous paragraph, with the additional requirement
that \(M\) and \(V\) have the same cardinals.) 
The answer here is an easy {\it no}. 

First note that \(M\) must be
closed under \(\omega\)-sequences. To see this,
fix a countable set \(\sigma\subseteq M\).
Let \(\tau\in M\) be an \(M\)-countable set containing \(\sigma\), and let
let \(f : \omega\to \tau\) be a surjection. 
Let \(x = f^{-1}[\sigma]\). Since \(j : V\to M\) is elementary and \(j(\omega) = \omega\),
\(x = j(x)\in M\). Since \(\sigma = f[x]\), \(\sigma\in M\).

We now reach a contradiction following Zapeltal's proof of the Kunen inconsistency.
Now let \(\lambda = \kappa_\omega(j)\).
Applying Shelah's Representation Theorem
\cite{AbrahamMagidor}, there exist regular cardinals \(\langle \delta_n\rangle_{n <
\omega}\) cofinal in \(\lambda\) for which there is a scale, or in other words an
increasing cofinal sequence \(\langle f_\alpha\rangle_{\alpha < \lambda^+}\) 
in the preorder \((\prod_{n < \omega} \delta_n,<^*)\).
Here \(f <^* g\) if \(f(n) < g(n)\) 
for all but finitely many \(n < \omega\).
Let \(\langle g_\alpha\rangle_{\alpha < \lambda^+} =
j(\langle f_\alpha\rangle_{\alpha < \lambda^+})\). 
Bu elementarity \(\langle g_\alpha\rangle_{\alpha < \lambda^+}\) is a scale 
for \(\langle j(\delta_n)\rangle_{n <
\omega}\) in \(M\), but since \(M\) is closed under countable sequences,
this is upwards absolute to \(V\).
Since \(j[\lambda^+]\) is cofinal in \(\lambda^+\),
\(\langle g_\alpha\rangle_{\alpha \in j[\lambda^+]}\)
is also a scale for \(\langle j(\delta_n)\rangle_{n <
\omega}\). Of course
\(g_{j(\alpha)} = j(f_\alpha)\), so
\(\langle j(f_\alpha)\rangle_{\alpha < \lambda^+}\)
is a scale for \(\langle j(\delta_n)\rangle_{n <
\omega}\).

Finally let \(h = \langle \sup j[\delta_n]\rangle_{n < \omega}\). 
We have \(\sup j[\delta_n] < j(\delta_n)\) since
\(j(\delta_n)\) is a regular cardinal larger than \(\delta_n\).
Therefore \(h\in \prod_{n < \omega} j(\delta_n)\).
But \(j(f_\alpha) < h\) for all \(\alpha < \lambda^+\),
\(g_\alpha = j\circ f_{j^{-1}(\alpha)}\) for any \(\alpha\in j[\lambda^+]\). 
This contradicts that \(\langle g_\alpha : \alpha \in
j[\lambda^+]\rangle\) is cofinal in \((\prod_{n <\omega} j(\delta_n),<^*)\).

\subsection{Strongly discontinuous embeddings}
The preceding proof
shows that the tight cover property 
is not really the right notion in this context.
A much more difficult question seems to be whether there can be an elementary embedding
\(j : V\to M\) such that 
for all cardinals \(\lambda\), 
\(j[\lambda]\) is contained in a set in \(M\) of \(M\)-cardinality \(\lambda\).
We call this the {\it local tight cover property}
because for any \(a\in M\) and any cardinal \(\lambda\),
\(j\) factors as \(V\stackrel{i}{\longrightarrow} N\stackrel{k}{longrightarrow}M\)
where \(N\) has the tight cover property at \(\lambda\) and \(a\in \text{ran}(k)\).

We will show that if there is a proper class of strongly compact
cardinals, then no such embedding can exist.
In fact, our proof rules out the weaker concept of a strongly
discontinuous embedding:
\begin{defn}
    Suppose \(j : V\to M\) is an elementary embedding. Then \(j\) is {\it
    strongly discontinuous} if for all cardinals \(\lambda\), if \(j(\lambda^+)
    \neq \lambda^+\) then \(j[\lambda^+]\) is bounded below \(j(\lambda^+)\).
\end{defn}
Obviously if \(\delta\) is regular, \(j(\delta) > \delta\), and 
\(j[\delta]\) is covered by a set \(C\) in \(M\)
of \(M\)-cardinality \(\delta\), then \(C\), and hence \(j[\delta]\), must be bounded
below the \(M\)-regular cardinal \(j(\delta)\).
Thus every elementary embedding with the local tight cover property is strongly discontinuous.

Strongly discontinuous embeddings also generalize the concept of a cardinal preserving embedding:
\begin{defn}
    Suppose \(M\) is an inner model. A nontrivial elementary embedding \(j : V\to M\) 
    is said to be {\it cardinal preserving} if \(\text{Card}^M = \text{Card}\).
\end{defn}
If \(j(\lambda^+)\) is a cardinal, then \(j(\lambda^+) =
j(\lambda)^+\). In particular, \(j(\lambda^+)\) is regular, so either
\(j[\lambda^+]\) is bounded below \(j(\lambda^+)\) or \(j(\lambda^+)  =
\lambda^+\). It follows that cardinal preserving embeddings are strongly
discontinuous.

\begin{prp}\label{AlmostSCMoves} Suppose \(j : V\to M\) is a strongly
discontinuous elementary embedding with critical point \(\kappa\) and
\(\delta\geq \kappa\) is an almost strongly compact cardinal. Then \(j(\delta) >
\delta\).
\begin{proof}
It suffices to prove this in the
case that \(\delta\) is an almost strongly compact limit cardinal.
Suppose towards a contradiction that \(j(\delta) = \delta\). Note that for all
\(\alpha < \kappa\), \(j(\delta^\alpha) = (\delta^{+\alpha})^M \leq
\delta^{+\alpha}\), and so \(j(\delta^{+\alpha}) = \delta^{+\alpha}\). It
follows that \((\delta^{+\kappa})^M = \delta^{+\kappa}\). In fact,
\((\delta^{+\kappa+1})^M = \delta^{+\kappa+1}\) by a standard argument. (For any
wellorder \(\preceq\) of \(\delta^{+\kappa}\), \(j({\preceq})\cap
\delta^{+\kappa}\) belongs to \(M\) and has length at least
\(\text{ot}(\preceq)\). Thus \((\delta^{+\kappa+1})^M > \text{ot}(\preceq).\))

On the other hand, \(j(\delta^{+\kappa}) = (\delta^{+j(\kappa)})^{M} >
(\delta^{+\kappa+1})^M = \delta^{+\kappa+1}\). Since \(j\) is strongly
discontinuous, we must therefore have \(j(\delta^{+\kappa+1}) > \sup
j[\delta^{+\kappa+1}]\).

Let \(U\) be the ultrafilter over \(\delta^{+\kappa+1}\) derived from \(j\)
using \(\sup j[\delta^{+\kappa+1}]\). Then for all \(\alpha < \kappa\),
\(j_U(\delta^{+\alpha}) = \delta^{+\alpha}\), so \(U\) is
\(\delta^{+\alpha}\)-indecomposable.

Since \(\delta\) is almost strongly compact, \(j_U = (j_W)^{M_D}\circ j_D\)
where \(D\) is an ultrafilter over a cardinal \(\eta < \delta\) and \(W\in M_D\)
is an \(M_D\)-ultrafilter over \(j_D(\delta^{+\kappa+1})\) that is
\(M_D\)-\(j_D(\gamma)\)-complete in \(M_D\) for all \(\gamma <
\delta^{+\kappa}\). Working in \(M_D\), let \(\zeta = \text{crit}(W)\). Then
\(\delta < \zeta \leq (\delta^{+j_D(\kappa)+1})^{M_D}\). This contradicts that
\(\zeta\) is measurable and therefore inaccessible.
\end{proof}
\end{prp}

\begin{thm}If there is a proper class of almost
strongly compact cardinals, then there are no strongly discontinuous embeddings.
\begin{proof}
If \(j : V\to M\) is an elementary embedding, then \(j\) must fix an almost
strongly compact cardinal above its critical point \(\kappa\) since \(j\) is
continuous at ordinals of cofinality \(\omega\) and the class of almost strongly
compact cardinals is closed. (Let \(\delta_0\) be the least almost strongly
compact cardinal, and for \(n < \omega\), let \(\delta_{n+1}\) be the least
almost strongly compact cardinal above \(j(\delta_n)\). Then \( \sup_{n<\omega}
\delta_n\) is an almost strongly compact cardinal that is fixed by \(j\).)
Therefore \(j\) is not strongly discontinuous by \cref{AlmostSCMoves}.
\end{proof}
\end{thm}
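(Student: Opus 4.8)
The plan is to derive a contradiction from the existence of a strongly discontinuous embedding $j : V \to M$ by producing an almost strongly compact cardinal $\delta \geq \text{crit}(j)$ that is \emph{fixed} by $j$; this contradicts \cref{AlmostSCMoves}, which forces $j(\delta) > \delta$ for every such $\delta$. In other words, all of the substantive work of the theorem is already packaged inside \cref{AlmostSCMoves}, and what remains is the routine task of locating a suitable fixed point, which is a standard ``fixed point in a closed unbounded class'' argument.

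First I would record two standard facts. On the one hand, the class $C$ of almost strongly compact cardinals is closed: if $\lambda = \sup_i \kappa_i$ is a limit of members of $C$, then for any $\gamma < \lambda$ and any $\eta \geq \lambda$ we may pick some $\kappa_i$ with $\gamma < \kappa_i \leq \lambda \leq \eta$; since $\kappa_i$ is $(\gamma,\eta)$-strongly compact and $(\gamma,\eta)$-strong compactness is inherited by all ordinals above $\kappa_i$ (the degeneracy noted after the definition of $(\delta,\lambda)$-strong compactness), the cardinal $\lambda$ is $(\gamma,\eta)$-strongly compact, so $\lambda \in C$. By hypothesis $C$ is a proper class, hence unbounded, so $C$ is closed and unbounded. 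On the other hand, $j$ is continuous at every ordinal of cofinality $\omega$, since $\text{crit}(j) > \omega$.

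Next I would build the fixed point. Let $\kappa = \text{crit}(j)$, and let $\delta_0 \in C$ be any almost strongly compact cardinal with $\delta_0 \geq \kappa$, which exists because $C$ is unbounded. Recursively let $\delta_{n+1}$ be the least element of $C$ strictly above $j(\delta_n)$; this exists by unboundedness of $C$, and the sequence is strictly increasing. Setting $\delta = \sup_{n < \omega} \delta_n$, closure of $C$ gives $\delta \in C$, i.e.\ $\delta$ is almost strongly compact, and clearly $\delta \geq \delta_0 \geq \kappa$. Since $\textnormal{cf}(\delta) = \omega$ and $j$ is continuous at cofinality-$\omega$ ordinals, $j(\delta) = \sup_{n < \omega} j(\delta_n)$; but $j(\delta_n) < \delta_{n+1} \leq \delta$ for every $n$, so $j(\delta) \leq \delta$, while $j(\delta) \geq \delta$ because $j$ is monotone on the ordinals. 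Hence $j(\delta) = \delta$.

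Finally, $\delta$ is an almost strongly compact cardinal with $\delta \geq \kappa$ and $j(\delta) = \delta$, directly contradicting \cref{AlmostSCMoves}. Therefore no strongly discontinuous embedding exists. The only point requiring a moment's care is arranging $\delta \geq \kappa$ so that \cref{AlmostSCMoves} is applicable; starting the recursion at a member of $C$ above $\kappa$ handles this, since otherwise the recursion could in principle stabilize on a fixed point below $\kappa$, where the lemma gives no leverage. This is the one spot I would flag as the (minor) obstacle; everything else is immediate from the closedness of $C$ and the continuity of $j$ at cofinality $\omega$.
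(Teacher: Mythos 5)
Your proof is correct and is essentially the paper's own argument: both construct an increasing $\omega$-sequence of almost strongly compact cardinals closed under $j$, use closedness of that class together with continuity of $j$ at ordinals of cofinality $\omega$ to obtain an almost strongly compact fixed point, and then invoke \cref{AlmostSCMoves}. Your one refinement---starting the recursion at an almost strongly compact cardinal above $\mathrm{crit}(j)$---in fact tightens the paper's parenthetical construction, which begins at the \emph{least} almost strongly compact cardinal and so, as literally written, could stabilize below the critical point, where \cref{AlmostSCMoves} gives no contradiction.
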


Given our observations above, the following is an immediate corollary:
\begin{thm}\label{NoStronglyDiscont} If there is a proper class of almost
    strongly compact cardinals, then there are no cardinal preserving embeddings.\qed
\end{thm}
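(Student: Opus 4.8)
The plan is to derive \cref{NoStronglyDiscont} as an immediate corollary of the preceding theorem, which states that a proper class of almost strongly compact cardinals rules out every strongly discontinuous embedding. Thus the only point requiring verification is that every cardinal preserving embedding is strongly discontinuous; granting this, the corollary follows by contraposition, since a cardinal preserving embedding would in particular be strongly discontinuous and hence could not exist.

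To see that a cardinal preserving embedding $j : V \to M$ is strongly discontinuous, I would fix a cardinal $\lambda$ with $j(\lambda^+) \neq \lambda^+$ and show that $j[\lambda^+]$ is bounded below $j(\lambda^+)$. Since $j$ is cardinal preserving, $j(\lambda^+)$ is a genuine cardinal of $V$; by elementarity it is $j(\lambda)^+$ in $M$, and successor cardinals are absolute, so $j(\lambda^+)$ is a regular cardinal of $V$. Now $j[\lambda^+]$ has cardinality $\lambda^+$, so were it cofinal in $j(\lambda^+)$ we would have $\mathrm{cf}(j(\lambda^+)) \leq \lambda^+$, and regularity would force $j(\lambda^+) \leq \lambda^+$. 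Since $j(\lambda^+) \geq \lambda^+$ always holds, this would give $j(\lambda^+) = \lambda^+$, contrary to our assumption. Hence $j[\lambda^+]$ is bounded below $j(\lambda^+)$, exactly as required by the definition of a strongly discontinuous embedding.

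With this observation in place the corollary is immediate. There is no real obstacle at this stage; the entire mathematical content of the result resides in the preceding theorem and, through it, in \cref{AlmostSCMoves}. That is where the work happens: one uses the continuity of $j$ at ordinals of cofinality $\omega$ together with the closure of the class of almost strongly compact cardinals to locate an almost strongly compact cardinal $\delta$ fixed by $j$, and then derives a contradiction from \cref{AlmostSCMoves}, which factors a suitable indecomposable derived ultrafilter via \cref{InternalDecomp} to produce an impossible measurable cardinal just above $\delta$. The present theorem merely records that cardinal preservation is a special case of this phenomenon.
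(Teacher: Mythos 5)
Your proposal is correct and follows the paper's route exactly: the paper likewise deduces \cref{NoStronglyDiscont} as an immediate corollary of the preceding theorem on strongly discontinuous embeddings, using the same observation (stated right after the definition of cardinal preserving) that if \(j(\lambda^+)\) is a true cardinal then \(j(\lambda^+) = j(\lambda)^+\) is regular, so either \(j[\lambda^+]\) is bounded below \(j(\lambda^+)\) or \(j(\lambda^+) = \lambda^+\). Your expanded verification of that observation is accurate, and your identification of \cref{AlmostSCMoves} and the fixed-point argument as the real content matches the paper.
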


We note the following fact, which improves on an observation due to Caicedo:
\begin{prp}
Suppose \(j : V\to M\) is a strongly discontinuous embedding with critical point
\(\kappa\). Then \(\kappa\) is \(\lambda\)-strongly compact for every \(\lambda
< \kappa_\omega(j)\).
\begin{proof}
Suppose \(\delta\) is a regular cardinal such that \(\kappa\leq \delta <
\kappa_{\omega}(j)\). Then \(\delta^+\) carries a uniform \(\kappa\)-complete
ultrafilter \(U\), namely the ultrafilter derived from \(j\) using \(\sup
j[\delta^+]\). Since \(\delta\) is regular, then \(U\) is necessarily
\(\delta\)-decomposable by a theorem of Kunen-Prikry \cite{PrikryKunen}. In
particular, \(\delta\) carries a uniform \(\kappa\)-complete ultrafilter.
Applying Ketonen's Theorem (\cref{KetonenThm}), \(\kappa\) is
\(\lambda\)-strongly compact for all \(\lambda < \kappa_\omega(j)\).
\end{proof}
\end{prp}
On the other hand, \(\kappa_\omega(j)\) cannot be a limit of
\(\kappa_\omega(j)^{+\kappa}\)-strongly compact cardinals by the proof of
\cref{NoStronglyDiscont}.

\section{Definability and ultrafilters}\label{HODSection}
The results of this section are a ZFC analog of the following theorem:
\begin{thm}\label{UAGAThm}
    Assume the Ground Axiom,\footnote{
        The {\it Ground Axiom} asserts that \(V\) is not a set generic extension of
        any inner model \(M\subsetneq V\).
    }
    the Ultrapower Axiom, and the existence of a
    strongly compact cardinal. Then every set is definable from an ordinal.\qed
\end{thm}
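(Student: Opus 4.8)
The plan is to chain together the two deepest consequences of the Ultrapower Axiom already at our disposal and then collapse the resulting picture using the Ground Axiom. First I would invoke the central structural theorem for strong compactness under UA, namely that the least strongly compact cardinal is supercompact (see \cite{UA}). Since we have assumed that a strongly compact cardinal exists, there is a least one, and UA promotes it to a supercompact cardinal. Thus the hypotheses of the cited HOD theorem are satisfied even though we assumed only strong compactness outright.

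Next I would apply the theorem that under UA, if there is a supercompact cardinal then \(V\) is a set generic extension of \(\textnormal{HOD}\) (\cite[Theorem 6.2.8]{UA}). This exhibits \(\textnormal{HOD}\) as a ground of \(V\): there is a partial order \(\mathbb P\in \textnormal{HOD}\) and an \(\textnormal{HOD}\)-generic filter \(G\) on \(\mathbb P\) with \(\textnormal{HOD}[G] = V\).

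Now the Ground Axiom asserts precisely that \(V\) is not a set generic extension of any inner model \(M\subsetneq V\). Applying this with \(M = \textnormal{HOD}\), which is an inner model, we conclude that \(\textnormal{HOD}\) cannot be a proper subclass of \(V\); hence \(V = \textnormal{HOD}\). Equivalently \(V = \textnormal{OD}\), so every set is definable from finitely many ordinal parameters, and coding such a tuple as a single ordinal via the G\"odel pairing function shows that every set is definable from a single ordinal, as desired.

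As for obstacles: the genuine mathematical content is entirely packaged inside the two cited theorems, both substantial results about UA. The crux is the reduction of strong compactness to supercompactness, which is exactly what lets us weaken the a priori stronger supercompactness hypothesis of the HOD theorem to the strong compactness we are given. Once those inputs are in hand, the Ground Axiom contributes nothing more than the identification of the ground \(\textnormal{HOD}\) with \(V\) itself, so no further difficulty arises and the \(\qed\) in the statement is justified.
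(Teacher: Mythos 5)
Your proof is correct, but it follows a genuinely different route from the paper's. You lean on two of the deepest consequences of UA in the literature: that the least strongly compact cardinal is supercompact, and that under UA plus a supercompact cardinal \(V\) is a generic extension of \(\textnormal{HOD}\) (\cite[Theorem 6.2.8]{UA}); the Ground Axiom then collapses \(\textnormal{HOD}\) to \(V\), and coding finite tuples of ordinals by a single ordinal finishes the argument. The paper instead deduces \cref{UAGAThm} from its own ZFC generalization \cref{GAThm}: for any strongly compact \(\kappa\), the model \(\kappa\textnormal{-HOD}\) is a ground of \(V\) (\cref{Ground}), so the Ground Axiom already yields \(V = \kappa\textnormal{-HOD}\) in ZFC, i.e., every set is definable from a \(\kappa\)-complete ultrafilter over an ordinal; UA then enters only through the comparatively light fact that under UA every countably complete ultrafilter over an ordinal is itself ordinal definable, which converts such definitions into ordinal definitions. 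The trade-off is this: your argument is shorter given the cited results, but it routes through supercompactness and uses UA at full strength in both main steps, so it cannot be factored into a ZFC part plus a UA part. The paper's argument isolates exactly how much of the conclusion is provable in ZFC alone (namely \cref{GAThm}) and confines UA to a single definability observation at the end, which is precisely the methodological point of the paper; proving ZFC analogs of UA theorems and showing the UA-specific content is thin.
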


We will prove the following generalization:
\begin{thm}\label{GAThm} Assume the Ground Axiom. Then for any strongly compact
cardinal \(\kappa\), every set is definable from a \(\kappa\)-complete
ultrafilter over an ordinal.\qed
\end{thm}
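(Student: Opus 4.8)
The goal is to prove that, assuming the Ground Axiom, every set is definable from a $\kappa$-complete ultrafilter over an ordinal, where $\kappa$ is strongly compact. Since the Ground Axiom asserts that $V$ is not a nontrivial set-generic extension of any inner model, the key observation is that $\kappa\text{-HOD}$ is a ground of $V$ by \cref{Ground} (the author's theorem that $\kappa\text{-HOD}$ is a ground whenever $\kappa$ is strongly compact). The plan is to feed this into the Ground Axiom: because $V$ is a set-generic extension of the inner model $\kappa\text{-HOD}$, the Ground Axiom forces $\kappa\text{-HOD}=V$, which is exactly the statement that every set is $\kappa$-OD, i.e.\ definable from a $\kappa$-complete ultrafilter over an ordinal.

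First I would invoke \cref{Ground} to obtain a partial order $\mathbb P\in\kappa\text{-HOD}$ and a $\kappa\text{-HOD}$-generic filter $G$ on $\mathbb P$ such that $\kappa\text{-HOD}[G]=V$. Since $\kappa\text{-HOD}$ is an inner model of $V$ (it contains all ordinals and is transitive), this exhibits $V$ as a set-generic extension of a ground. The Ground Axiom states that $V$ is not a set-generic extension of any \emph{proper} inner model $M\subsetneq V$; applying this with $M=\kappa\text{-HOD}$, the only way to avoid a contradiction is to have $\kappa\text{-HOD}=V$. One must be slightly careful that the Ground Axiom quantifies over proper inner models and that the trivial forcing is excluded, so the conclusion is genuinely $\kappa\text{-HOD}=V$ rather than a vacuous statement; this is handled by noting that if $\kappa\text{-HOD}\subsetneq V$ then \cref{Ground} would directly witness a violation of the Ground Axiom.

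Finally I would unwind the definition of $\kappa\text{-HOD}$. Recall from the introduction that $\kappa\text{-OD}$ is the class of sets definable from a $\kappa$-complete ultrafilter over an ordinal, and that $x$ is $\kappa$-OD if and only if $x\in\mathrm{OD}_U$ for some such $U$; thus $\kappa\text{-HOD}$ is the class of hereditarily $\kappa$-OD sets. The equality $\kappa\text{-HOD}=V$ means every set is hereditarily $\kappa$-OD, and in particular every set is $\kappa$-OD, i.e.\ definable from a $\kappa$-complete ultrafilter over an ordinal. This is precisely the conclusion of \cref{GAThm}.

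The only real content is \cref{Ground} itself, which is established later in the paper; given that result the present theorem is essentially immediate. The one point requiring a moment's care is the logical form of the Ground Axiom: one must confirm that exhibiting $V$ as a \emph{nontrivial} set-forcing extension of $\kappa\text{-HOD}$ is what the axiom prohibits, so that the axiom directly yields $\kappa\text{-HOD}=V$ rather than merely constraining the forcing. I expect no genuine obstacle beyond correctly citing \cref{Ground} and verifying that the forcing witnessing it is nontrivial precisely when $\kappa\text{-HOD}\neq V$.
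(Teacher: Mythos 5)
Your proposal is correct and is exactly the paper's argument: the paper proves \cref{Ground} and then remarks that it ``of course immediately implies'' \cref{GAThm}, the implication being precisely your deduction that the Ground Axiom forces $\kappa\text{-HOD}=V$, whence every set is $\kappa$-OD. Your extra care about the logical form of the Ground Axiom (that it only bites when $\kappa\text{-HOD}\subsetneq V$, which is the desired dichotomy anyway) is sound and matches the definition given in the paper's footnote.
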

Under UA, every countable complete ultrafilter over an ordinal is ordinal definable,
so \cref{GAThm} implies \cref{UAGAThm}.

The proof (which appears below \cref{Ground}) involves the following collection
of structures:
\begin{defn}
    Let \(\kappa\text{-OD}\) denote the class of sets that are definable from a
    \(\kappa\)-complete ultrafilter over an ordinal, and let \(\kappa\text{-HOD}\) denote the
    class of hereditarily \(\kappa\)-OD sets.
\end{defn}
Note that \(x\) is \(\kappa\)-OD if and only if \(x\) is in
\(\textnormal{OD}_U\) for some \(\kappa\)-complete ultrafilter \(U\) over an
ordinal, so \(\kappa\)-OD is first-order definable, and therefore so is \(\kappa\)-HOD. 

The following basic observation sets things in motion:
\begin{thm}\label{kappaHODZFC} For any cardinal \(\kappa\), the class
\(\kappa\textnormal{-HOD}\) is an inner model of \(\textnormal{ZF}\). If
\(\kappa\) is strongly compact, then \(\kappa\textnormal{-HOD}\) satisfies the
Axiom of Choice.
\begin{proof}
The proof that \(\kappa\textnormal{-HOD}\) satisfies ZF is just like the usual
proof that \(\text{HOD}\) satisfies ZF, so we omit it. The issue in showing that 
\(\kappa\textnormal{-HOD}\) satisfies the Axiom of
Choice is that the class of \(\kappa\)-complete ultrafilters over ordinals is not
naturally wellordered.

Assume that \(\kappa\) is strongly compact. 
The key idea is that in this case, for any ordinal \(\delta\), 
there is actually a \(\kappa\)-OD
wellorder of the \(\kappa\)-complete ultrafilters over \(\delta\). 
Let \(\mathcal W\) be a \(\kappa\)-complete fine ultrafilter
over \(P_\kappa(P(\delta))\). 
Let \(W\) be an ultrafilter over an ordinal such that \(\mathcal W\) and \(W\) are
Rudin-Keisler equivalent,
or in other words, \(j_{\mathcal W} = j_W\).
For each \(\kappa\)-complete ultrafilter \(U\) over \(\delta\), 
let \(\alpha_U\) be the least ordinal such that
\(U\) is the ultrafilter on \(\delta\) derived from \(j_W\) using \(\alpha_U\).\footnote{
    It is a standard fact that \(\alpha_U\) exists, so we include the proof
    in fine print. Let \(\sigma = [\text{id}]_\mathcal W\).
    Since \(\mathcal W\) is a fine ultrafilter over \(P_\kappa(P(\delta))\),
    \(|\sigma |^{M_{\mathcal W}} < j_\mathcal W(\kappa)\) and \(j_\mathcal W[P(\delta)]\subseteq \sigma\).
    Therefore \(j_\mathcal W[U]\subseteq \sigma\cap j_\mathcal W(U)\).
    Since \(j_\mathcal W(U)\) is \(j_\mathcal W(\kappa)\)-complete,
    \(\bigcap (\sigma\cap j_\mathcal W(U))\) is nonempty. 
    It follows that there is some ordinal \(\alpha\in \bigcap j_\mathcal W[U]\).
    Clearly \(U\) is the ultrafilter over \(\delta\) derived from \(j_W\) using \(\alpha\).
}

The function \(U\mapsto \alpha_U\) is injective and 
\(\kappa\)-OD. It follows that there is a \(\kappa\)-OD wellorder
of the set of \(\kappa\)-complete ultrafilters
over \(\delta\).

Now that one has a \(\kappa\)-OD wellorder of the \(\kappa\)-complete
ultrafilters over \(\delta\) for each \(\delta\), it is easy to construct a
\(\kappa\)-OD wellorder of the sets that are ordinal definable from a
\(\kappa\)-complete ultrafilter over \(\delta\). This suffices to show that
\(\kappa\text{-HOD}\) satisfies the Axiom of Choice.
The proof is the same as the proof that AC holds in \(\text{HOD}\).
\end{proof}
\end{thm}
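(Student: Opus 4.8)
The plan is to treat the two assertions separately, dispatching the ZF part by appeal to the standard machinery and concentrating the real work on the Axiom of Choice.

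For the first claim I would argue exactly as one does for \(\textnormal{HOD}\). The essential input is that \(\kappa\text{-OD}\) is a genuine definable class: by the footnote to the definition, \(x\) is \(\kappa\)-OD if and only if \(x\in\textnormal{OD}_U\) for some \(\kappa\)-complete ultrafilter \(U\) over an ordinal, and \(\textnormal{OD}_U\) is uniformly definable from the parameter \(U\) via the Reflection Theorem, since definability can be captured by truth in some \(V_\alpha\). As \(\kappa\text{-HOD}\) is transitive, contains every ordinal, and is precisely the class of hereditarily \(\kappa\)-OD sets, the verification of the ZF axioms is the usual reflection-and-closure argument; each instance goes through verbatim because the defining predicate is a single formula whose parameter ranges over the definable class of \(\kappa\)-complete ultrafilters over ordinals.

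The real content is \(\textnormal{AC}\), and here the plan is to produce a \(\kappa\text{-HOD}\)-definable wellordering of \(\kappa\text{-HOD}\). In the \(\textnormal{HOD}\) case one wellorders \(\textnormal{OD}\) sets lexicographically by (Gödel number of defining formula, tuple of ordinal parameters); the obstruction for \(\kappa\text{-HOD}\) is that the extra parameter is now a \(\kappa\)-complete ultrafilter over an ordinal, and the class of such ultrafilters carries no canonical wellordering. The decisive step is therefore to exhibit, for each ordinal \(\delta\), a \(\kappa\)-OD injection of the \(\kappa\)-complete ultrafilters over \(\delta\) into the ordinals. For this I would use strong compactness to fix a \(\kappa\)-complete fine ultrafilter \(\mathcal W\) over \(P_\kappa(P(\delta))\), pass to a Rudin--Keisler-equivalent ultrafilter \(W\) over an ordinal so that \(j_{\mathcal W}=j_W\) (made precise by \cref{PushforwardLma}), and send each \(\kappa\)-complete \(U\) over \(\delta\) to the least ordinal \(\alpha_U\) from which \(U\) is derived via \(j_W\). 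The map \(U\mapsto\alpha_U\) is then injective and definable from the single parameter \(W\), hence \(\kappa\)-OD.

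The hard part, and the only place strong compactness is genuinely used, is verifying that every \(\kappa\)-complete \(U\) over \(\delta\) really is derived from \(j_W\) using some seed, so that \(\alpha_U\) is well defined. The plan here is a fineness-plus-completeness argument: writing \(\sigma=[\textnormal{id}]_{\mathcal W}\), fineness gives \(j_{\mathcal W}[P(\delta)]\subseteq\sigma\) and \(|\sigma|^{M_{\mathcal W}}<j_{\mathcal W}(\kappa)\), so \(j_{\mathcal W}[U]\subseteq\sigma\cap j_{\mathcal W}(U)\); since \(j_{\mathcal W}(U)\) is \(j_{\mathcal W}(\kappa)\)-complete in \(M_{\mathcal W}\), the small intersection \(\bigcap(\sigma\cap j_{\mathcal W}(U))\) is nonempty, and any ordinal in \(\bigcap j_{\mathcal W}[U]\) is a seed for \(U\). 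Once these \(\kappa\)-OD wellorderings of the ultrafilters over each \(\delta\) are in hand, I would assemble them into a single \(\kappa\)-OD wellordering of all \(\kappa\)-OD sets, ordering by defining formula, ordinal parameters, and the wellordering of the ultrafilter parameter, and then restrict to \(\kappa\text{-HOD}\) exactly as in the proof that \(\textnormal{HOD}\vDash\textnormal{AC}\).
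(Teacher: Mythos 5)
Your proposal is correct and follows essentially the same route as the paper: the same reduction of AC to a \(\kappa\)-OD wellordering of the \(\kappa\)-complete ultrafilters over each ordinal \(\delta\), the same seed map \(U\mapsto\alpha_U\) obtained from a Rudin--Keisler representative \(W\) of a fine ultrafilter over \(P_\kappa(P(\delta))\), and the same fineness-plus-completeness argument that the seed exists. The only cosmetic difference is that you make explicit the appeal to \cref{PushforwardLma} and the definability bookkeeping that the paper leaves implicit.
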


Note that the Axiom of Choice also holds \(\omega\)-HOD,
since in fact \(\omega\)-HOD is \(V\):
\begin{prp}\label{OmegaThm} \(V = \omega\textnormal{-HOD}\).
    \begin{proof}
    Let \(M = \omega\text{-HOD}\).
    By the definition of \(M\), \(\omega\)-complete ultrafilters descend to \(M\):
    in fact, if \(U\) is an \(M\)-ultrafilter, then \(U\in M\),
    since \(U\) extends to an ultrafilter \(W\) which is isomorphic to an 
    ultrafilter \(Z\) over an ordinal; \(j_Z\) is \(\omega\)-OD,
    so \(j_Z\restriction M\) is close to \(M\), and so since \(U\) is a
    derived \(M\)-ultrafilter of \(j_Z\restriction M\), \(U\in M\).

    Since \(M\) is closed under finite sequences,
    \(M\) has the \(\omega\)-cover property.
    Although we have not shown that \(\omega\)-HOD
    satisfies the Axiom of Choice,
    the proof of \cref{AppxEquiv} still goes through
    with \(\kappa = \omega\).
    It follows that \(M\) has the \(\omega\)-approximation property,
    which of course implies that \(V = M\).
    \end{proof}
\end{prp}

We will need the analog of Vopenka's Theorem for \(\kappa\text{-HOD}\). (The proof
requires no real modification.)
\begin{lma}\label{Vopenka} For any strongly compact cardinal \(\kappa\), for any
set of ordinals \(A\), \(\kappa\textnormal{-HOD}_A\) is a set-generic extension
of \(\kappa\textnormal{-HOD}\).
\begin{proof}
We first note that \cref{kappaHODZFC} relativizes to show that
\(\kappa\textnormal{-HOD}_A\) is an inner model of ZFC.
To show that \(\kappa\textnormal{-HOD}_A\) is a set-generic extension
of \(\kappa\textnormal{-HOD}\),
it therefore suffices to verify Bukovsky's criterion \cite{Usuba} by showing that
\(\kappa\textnormal{-HOD}\) has the \((2^\rho)^+\)-uniform cover property in 
\(\kappa\textnormal{-HOD}_A\) where \(\rho = \sup A\).\footnote{
    To apply Bukovsky's Theorem,
    it is essential that \(\kappa\textnormal{-HOD}\)
    is a model of AC; this is our only signifiant use of \cref{kappaHODZFC}.
} 

Let \(\gamma\) be an ordinal and let \(f : \gamma \to
\gamma\) be a function that is \(\kappa\)-OD from \(A\). The function
 \(g : \gamma\times P(\rho)\to \gamma\) defined by
\(f(\alpha) = g(\alpha,A)\) for all \(\alpha < \gamma\) is then
\(\kappa\)-OD. Let \(F(\alpha) =
\{g(\alpha,B): B\subseteq \gamma\}\). Then \(F\) is \(\kappa\)-OD, \(|F(\alpha)|
< (2^\rho)^+\) for all \(\alpha < \gamma\), and \(f(\alpha) \in F(\alpha)\) for
all \(\alpha < \gamma\). This verifies that \(\kappa\textnormal{-HOD}_A\) has
the \((2^\rho)^+\)-uniform covering property.
\end{proof}
\end{lma}

\begin{prp}\label{HODA} Suppose \(\kappa\) is strongly compact and \(A\) is a
set of ordinals such that \(V_\kappa\subseteq \kappa\textnormal{-HOD}_A\). Then
\(V = \kappa\textnormal{-HOD}_A\).
\begin{proof}
Let \(N\) denote \(\kappa\textnormal{-HOD}_A\). We first show that \(N\) is
closed under \(\kappa\)-sequences.
To show
\(N\) is closed under \(\kappa\)-sequences, it therefore suffices to show that
for all ordinals \(\lambda\), \({}^\kappa\lambda\subseteq N\). Let \(U\) be a
\(\kappa\)-complete ultrafilter over an ordinal \(\nu\) such that \(j_U(\kappa)
> \lambda\). Then 
\[{}^\kappa\lambda\subseteq j_U(V_\kappa)\subseteq j_U(N)\subseteq N\]

We justify this last inclusion. It suffices to show that every set that \(M_U\)
thinks is definable from a \(j_U(\kappa)\)-complete ultrafilter over an
ordinal is truly definable (in \(V\)) from a \(\kappa\)-complete ultrafilter
over an ordinal. Since \(M_U\) is definable from the \(\kappa\)-complete
ultrafilter \(U\), it therefore suffices to show that every
\(j_U(\kappa)\)-complete ultrafilter \(W\) of \(M_U\) over an ordinal \(\gamma\)
is definable from a \(\kappa\)-complete ultrafilter over an ordinal. But
consider the following \(\kappa\)-complete ultrafilter:
\[U\text{-}\textstyle\sum W = \{A\subseteq \nu\times \bar \gamma : [\alpha \mapsto A_\alpha]_U\in W\}\] where \(\bar \gamma\) is the least
ordinal such that \(j_U(\bar \gamma) \geq \gamma\). We have
\[W = \{[f]_U : \text{ran}(f)\subseteq P(\bar \gamma) \text{ and
}\{(\alpha,\beta) : \beta\in f(\alpha)\}\in U\text{-}\textstyle\sum W\}\] so \(W\) is definable from
\(U\text{-}\textstyle\sum W\).

We now show that \(N\) has the \(\kappa\)-approximation and cover properties.
Since \(N\) is closed under \({<}\kappa\)-sequences, \(N\) certainly has the
\(\kappa\)-cover property. Since \(N\) satisfies the Axiom of Choice and \(U\cap
N\in N\) for any \(\kappa\)-complete ultrafilter over an ordinal, it easily
follows that \(U\cap N\in N\) for any ultrafilter \(U\) over a set that belongs
to \(N\). Therefore by \cref{AppxEquiv}, \(N\) has the \(\kappa\)-approximation
property. 

It follows that \(V = N\), since \(V\) is the unique inner model with
the \(\kappa\)-approximation and cover properties that contains
\(H_{\kappa^+}\). (This last fact is a consequence of the proof of the
definability of inner models with the \(\kappa\)-approximation and cover
properties.)
\end{proof}
\end{prp}

\begin{thm}\label{Ground} For any strongly compact cardinal \(\kappa\), \(V\) is
a set generic extension of \(\kappa\textnormal{-HOD}\).
\begin{proof}
Let \(A\) be a set of ordinals such that \(V_\kappa\subseteq
\kappa\textnormal{-HOD}_A\); for example, \(A\) can be chosen to code a
wellfounded extensional relation \(R\subseteq \kappa\times \kappa\) whose
transitive collapse is \(V_\kappa\). By \cref{HODA}, \(V =
\kappa\text{-HOD}_A\), and by \cref{Vopenka}, \(\kappa\text{-HOD}_A\) is a
generic extension of \(\kappa\text{-HOD}\), so \(V\) is a generic extension of
\(\kappa\text{-HOD}\).
\end{proof}
\end{thm}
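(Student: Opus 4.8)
The plan is to combine the two main facts established just above, namely \cref{HODA} and \cref{Vopenka}, by exhibiting a single set of ordinals \(A\) that witnesses both simultaneously. \cref{Vopenka} tells us that \(\kappa\textnormal{-HOD}_A\) is a set-generic extension of \(\kappa\textnormal{-HOD}\) for \emph{every} set of ordinals \(A\), so the only real choice is to arrange that \(\kappa\textnormal{-HOD}_A\) is actually all of \(V\). For this, \cref{HODA} reduces matters to finding an \(A\) with \(V_\kappa\subseteq \kappa\textnormal{-HOD}_A\).

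First I would produce such an \(A\) by coding the structure \(V_\kappa\) into a set of ordinals. Since \(\kappa\) is strongly compact, it is inaccessible, so \(|V_\kappa| = \kappa\) and we may fix a bijection \(\kappa\to V_\kappa\); pulling back the membership relation gives a wellfounded extensional relation \(R\subseteq \kappa\times\kappa\) whose transitive collapse is \(V_\kappa\). Using a pairing function on the ordinals, \(R\) can be regarded as a single set of ordinals \(A\). The point is that from \(A\) one recovers \(R\), and hence (by taking the transitive collapse) the isomorphism sending each \(\alpha < \kappa\) to the corresponding element of \(V_\kappa\); thus every element of \(V_\kappa\) is definable from \(A\) together with an ordinal parameter, and so lies in \(\text{OD}_A\subseteq \kappa\text{-OD}_A\) (principal ultrafilters over ordinals being \(\kappa\)-complete, ordinary ordinal definability from \(A\) is a special case of \(\kappa\)-OD from \(A\)). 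Because \(V_\kappa\) is transitive, each of its elements has all of its members already in \(V_\kappa\), so these elements are in fact hereditarily \(\kappa\)-OD from \(A\); that is, \(V_\kappa\subseteq \kappa\textnormal{-HOD}_A\).

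With this \(A\) in hand the rest is immediate. Applying \cref{HODA} gives \(V = \kappa\textnormal{-HOD}_A\), and applying \cref{Vopenka} gives that \(\kappa\textnormal{-HOD}_A\) is a set-generic extension of \(\kappa\textnormal{-HOD}\). Combining these two facts yields that \(V\) is a set-generic extension of \(\kappa\textnormal{-HOD}\), as desired.

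I do not expect any genuine obstacle at this final stage: all of the substantive work has been absorbed into the earlier results. The heart of \cref{HODA} is the closure of \(\kappa\textnormal{-HOD}_A\) under \(\kappa\)-sequences together with the verification of the \(\kappa\)-approximation and cover properties via \cref{AppxEquiv}, while the Vopenka-style argument of \cref{Vopenka} relies on \(\kappa\textnormal{-HOD}\) satisfying choice (\cref{kappaHODZFC}) in order to invoke Bukovsky's criterion. The only care needed here is to check that a single coding set \(A\) works, and to confirm that transitivity of \(V_\kappa\) upgrades ``\(\kappa\)-OD from \(A\)'' to ``hereditarily \(\kappa\)-OD from \(A\),'' both of which are routine.
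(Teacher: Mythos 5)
Your proposal is correct and is essentially identical to the paper's proof: the paper also takes \(A\) coding a wellfounded extensional relation \(R\subseteq\kappa\times\kappa\) collapsing to \(V_\kappa\), then cites \cref{HODA} and \cref{Vopenka} in exactly the same way. The only difference is that you spell out the routine coding details (inaccessibility of \(\kappa\), transitivity of \(V_\kappa\) giving hereditary \(\kappa\)-OD\(_A\)-ness) that the paper leaves implicit.
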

\cref{Ground} of course immediately implies \cref{GAThm}. 

Given \cref{OmegaThm},
it is natural to speculate that \cref{Ground} is just a precursor to 
a proof that \(V = \kappa\text{-HOD}\) for all strongly compact cardinals \(\kappa\).
But of course this is not the case:
\begin{prp}\label{Cohen}
It is consistent with \textnormal{ZFC} that 
there is a strongly compact cardinal \(\kappa\) such that 
\(V \neq \kappa\textnormal{-HOD}\).
\begin{proof}
Assume there is a strongly compact cardinal. Let \(g\) be \(V\)-generic for
Cohen forcing. Note that any \(\kappa\)-complete ultrafilter \(U\) of \(V[g]\)
over an ordinal is definable over \(V[g]\) from a \(\kappa\)-complete
ultrafilter in \(V\): indeed, \(U\) is the unique ultrafilter extending \(U\cap
V\), and \(U\cap V\in V\), by the L\'evy-Solovay Theorem \cite{LevySolovay}. It
follows that \[\kappa\textnormal{-HOD}^{V[g]}\subseteq
\kappa\textnormal{-HOD}^{V}\] by the homogeneity of Cohen forcing. Therefore
\(g\notin \kappa\textnormal{-HOD}^{V[g]}\).
\end{proof}
\end{prp}

Under large cardinal hypotheses, the Ground Axiom is equivalent to the
statement that every set is ordinal definable from a countably complete ultrafilter
on an ordinal in all generic extensions:
\begin{thm}\label{GroundEqui}
    Assume there is a proper class of strongly compact cardinals.
    Then the following are equivalent:
    \begin{enumerate}[(1)]
        \item The Ground Axiom.
        \item \(V\) is the intersection of all
             models of the form \((\delta\textnormal{-HOD})^{V^{\mathbb B}}\)
             where \(\delta\) is a cardinal and \(\mathbb B\) is a complete Boolean algebra.
        \item In any generic extension \(N\), every set in \(V\) is ordinal definable 
             using an internal ultrapower embedding of \(N\) as a predicate.
        \item Every generic extension \(N\) satisfies that every set in \(V\) is ordinal definable 
             using an elementary embedding from \(N\) into an inner model that is closed 
             under \((\omega_1)^N\)-sequences.
    \end{enumerate}
    \begin{proof}
        {\it (1) implies (2):} Fix a set \(x\), a cardinal \(\delta\),
        and a complete Boolean algebra \(\mathbb B\). We must show that \(x\)
        is \(\delta\)-OD in \(V^{\mathbb B}\). Let \(\kappa > \delta\cdot |\mathbb B|\) 
        be a strongly compact cardinal. Then \(x\) is \(\kappa\)-OD in \(V\) 
        by \cref{GAThm}. But since the Ground Axiom holds, \(V\) is definable over
        \(V^{\mathbb B}\) without parameters (as the intersection of all
        grounds of \(V^{\mathbb B}\). Moreover since \(\kappa > |\mathbb B|\),
        every \(\kappa\)-complete ultrafilter of \(V\) is ordinal definable in
        \(V^{\mathbb B}\) from its unique extension to \(V^{\mathbb B}\).
        It follows that \(x\) is \(\kappa\)-OD in \(V^{\mathbb B}\).
        Since \(\kappa \geq \delta\), this implies (2).

        {\it (2) implies (3):} This is easy given the observation that 
        a countably complete ultrafilter over an ordinal
        is ordinal definable from its associated ultrapower embedding.

        {\it (3) implies (4):} Trivial.

        {\it (4) implies (1):} Fix a ground \(M\) of \(V\). We must show \(M = V\).
        Let \(\mathbb B\) be a complete Boolean algebra of \(M\) such that \(V = M[G]\) for
        some \(M\)-generic ultrafilter \(G\) on \(\mathbb B\).
        Let \(\delta > |\mathbb B|\) and let \(H\subseteq \text{Col}(\omega,\delta)\) be
        a \(V\)-generic filter. By the universality of collapse forcing,
        there is an \(M\)-generic filter \(F\subseteq \text{Col}(\omega,\delta)\) 
        in \(M[G][H]\) such that \(M[F] = M[G][H]\). 
        
        Fix a set of ordinals \(x\in V\), and we will show that \(x\in M\). 
        Now let \(N = M[G][H]\) and fix in \(N\) an elementary embedding \(j : N\to P\)
        such that \(P^{{(\omega_1)^N}}\cap N\subseteq P\) and 
        \(x\) is ordinal definable using \(j\) as a predicate.
        By the Reflection Theorem,\footnote{Formally, we are working in 
        von Neumann-Bernays-G\"odel class theory (NBG). For any formula \(\varphi\) in the language of
        first-order set theory with an additional predicate symbol, NBG proves that
        for all classes \(A\), there is an ordinal \(\alpha\) such that for all \(x\in V_\alpha\),
        \((V_\alpha,A\cap V_\alpha)\) satisfies \(\varphi(x)\) if and only if \((V,A)\) satisfies \(\varphi(x)\).} there is some ordinal \(\alpha\) such that
        \(x\) is ordinal definable in \(N_\alpha\) using \(j\restriction N_\alpha\)
        as a predicate. (Here \(N_\alpha = N\cap V_\alpha\).)
        
        Since \(M\) is an
        \(\omega_1^N\)-pseudoground of \(N\) and \(P^{(\omega_1)^N}\cap N\subseteq P\), 
        the Hamkins Universality Theorem 
        (\cref{HamkinsUniversality}) implies that \(j\restriction M_\alpha\) belongs to \(M\).
        Note however that \(j\restriction N_\alpha\) is definable in \(N\) from 
        \(j\restriction M_\alpha\) and \(M\). It follows that \(x\) is definable in
        \(N\) from the predicate for \(M\) and parameters in \(M\).
        Since \(N = M[F]\) where \(F\subseteq \text{Col}(\omega,\delta)\)
        is \(M\)-generic, the homogeneity of collapse forcing implies that 
        \(x\cap M\) is definable over \(M\). 
        Since \(x\) is a set of ordinals, \(x\cap M = x\),
        so \(x\in M\).

        It follows that every set of ordinals belongs to \(M\), which proves \(V = M\).
        Since \(M\) was an arbitrary ground, the Ground Axiom holds.
    \end{proof}
\end{thm}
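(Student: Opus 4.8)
The plan is to run the cycle $(1)\Rightarrow(2)\Rightarrow(3)\Rightarrow(4)\Rightarrow(1)$, the last implication being where all the difficulty lies. For $(1)\Rightarrow(2)$, the inclusion $\bigcap(\delta\text{-HOD})^{V^{\mathbb B}}\subseteq V$ is trivial (take $\mathbb B$ trivial), so I must show $V\subseteq(\delta\text{-HOD})^{V^{\mathbb B}}$ for all $\delta,\mathbb B$. Fix these and a strongly compact $\kappa>\delta\cdot|\mathbb B|$; by \cref{GAThm}, under the Ground Axiom this one $\kappa$ already makes every set $\kappa$-OD in $V$. I would transport this to $V^{\mathbb B}$ using that each $\kappa$-complete ultrafilter of $V$ has a unique extension there (L\'evy--Solovay, $\kappa>|\mathbb B|$) and that, by Usuba's downward directedness of grounds (available from the proper class of strongly compacts), every ground of $V^{\mathbb B}$ contains $V$, so $V$ is the parameter-free-definable mantle of $V^{\mathbb B}$. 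Hence every set of $V$ is $\kappa$-OD, so $\delta$-OD, in $V^{\mathbb B}$; read hereditarily, this is $V\subseteq(\delta\text{-HOD})^{V^{\mathbb B}}$.

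The middle implications are soft. Given (2), a set $x\in V$ lies in $(\delta\text{-HOD})^N$ for every $\delta$, so it is ordinal definable in $N$ from a countably complete ultrafilter $W$ over an ordinal; as $W$ is derived from its own ultrapower $j_W$ using $[\mathrm{id}]_W$, it is recoverable from $j_W$, and $x$ is definable from the internal ultrapower $j_W$, giving (3). Then $(3)\Rightarrow(4)$ is immediate: $j_W:N\to M_W$ is an elementary embedding whose target, $W$ being countably complete, is closed under $(\omega_1)^N$-sequences, so the same embedding witnesses (4).

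All the work is in $(4)\Rightarrow(1)$. I would fix an arbitrary ground $M$, $V=M[G]$ with $G$ generic for $\mathbb B\in M$ of size $\delta$, and aim to show every set of ordinals $x\in V$ lies in $M$. The device is to force $M$ into position as a pseudoground at $(\omega_1)^N$ and then apply the Hamkins Universality Theorem (\cref{HamkinsUniversality}). Forcing with $\mathrm{Col}(\omega,\delta)$, universality of the collapse absorbs $\mathbb B$, so $N=V[H]=M[G][H]$ equals $M[F]$ for a single $\mathrm{Col}(\omega,\delta)$-generic $F$; as a ground of $N$ via a forcing of size $\delta$, $M$ has the $(\delta^+)^M$-approximation and cover properties in $N$, and since the collapse carries $(\delta^+)^M$ onto $(\omega_1)^N$, this is exactly the statement that $M$ is an $(\omega_1)^N$-pseudoground of $N$. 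As $N$ is a generic extension of $V$, hypothesis (4) yields an elementary $j:N\to P$ with $P$ closed under $(\omega_1)^N$-sequences and $x$ ordinal definable in $N$ from $j$; by Reflection I pass to a set-sized restriction $j\restriction N_\alpha$.

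Two steps then conclude, and the critical point of $j$ is key to both: being measurable in $N$, it exceeds $(\omega_1)^N$ and hence $\delta$. First, every $M$-ultrafilter derived from $j$ is therefore $(\omega_1)^N$-complete, so lies in $M$ by \cref{HamkinsUniversality}; thus $j$ is close to $M$ and $j\restriction M_\alpha\in M$. Second, $j$ fixes the size-$\delta$ generic $F$, so $j\restriction N_\alpha$ is recoverable in $N$ from $j\restriction M_\alpha\in M$ and the Laver-definable predicate for $M$, making $x$ definable in $N$ from $M$ and parameters in $M$; the weak homogeneity of $\mathrm{Col}(\omega,\delta)$ then forces $x=x\cap M\in M$. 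As $M$ was arbitrary, the Ground Axiom holds. I expect the real obstacles to be exactly these calibrations in $(4)\Rightarrow(1)$: arranging that the pseudoground parameter of $M$ lands precisely on $(\omega_1)^N$ after collapsing, and converting the $F$-dependent embedding into an honest $M$-definition by homogeneity, both of which hinge on the delicate interplay of the approximation property, the collapse, and the external embedding $j$.
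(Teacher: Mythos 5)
Your proposal tracks the paper's own proof almost step for step: the same cycle of implications, the same use of \cref{GAThm} together with the definability of \(V\) over \(V^{\mathbb B}\) for \((1)\Rightarrow(2)\), the same recovery of a countably complete ultrafilter from its ultrapower embedding for \((2)\Rightarrow(3)\), and the same collapse-absorption, pseudoground, and homogeneity argument for \((4)\Rightarrow(1)\). Much of your added precision is correct and welcome: that \(\mathrm{crit}(j)\) is measurable in \(N\) and hence above \((\omega_1)^N\) and \(\delta\), that \(j(F)=F\), that \((\delta^+)^M=(\omega_1)^N\) calibrates the pseudoground parameter, and that Laver--Hamkins definability of \(M\) feeds the homogeneity argument.

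There is, however, one genuine gap, in the first of your "two steps" in \((4)\Rightarrow(1)\). From the first clause of \cref{HamkinsUniversality} you correctly conclude that every \(M\)-ultrafilter derived from \(j\) belongs to \(M\), i.e., that \(j\) is close to \(M\); but you then write "thus \(j\) is close to \(M\) and \(j\restriction M_\alpha\in M\)," and this last inference does not follow. Closeness is strictly weaker than amenability: knowing that each individual derived ultrafilter \(\{A\in P^M(X): a\in j(A)\}\) is an element of \(M\) does not hand \(M\) the function \(y\mapsto j(y)\), because \(M\) has no way to match external seeds \(a\in P\) with the ultrafilters they derive. (Compare the paper's \cref{CloseLemma}: closeness is equivalent to all \emph{preimages} \(j^{-1}[A]\), \(A\in P\), lying in \(M\), which is visibly weaker than the \emph{restrictions} \(j\restriction M_\alpha\) lying in \(M\).) Nor can you appeal directly to the \((\omega_1)^N\)-approximation property: applying it to the set \(j\restriction M_\alpha\) reduces the problem to showing that the countable fragments \(\{(y,z)\in\sigma : j(y)=z\}\), for countable-in-\(N\) sets \(\sigma\in M\), belong to \(M\); these fragments are themselves small, so the approximation criterion is vacuous for them, and closeness does not decide the relation \(j(y)=z\) for \(y,z\in M\) since the indexed assignment \(z\mapsto(\text{ultrafilter derived using }z)\) is exactly what is missing from \(M\). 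Passing from "every derived ultrafilter is in \(M\)" to "\(j\restriction M_\alpha\in M\)" is precisely the content of the second, extender clause of \cref{HamkinsUniversality}, whose extra hypothesis is the closure of the target under \((\omega_1)^N\)-sequences and whose proof is genuinely harder than the one-line argument behind the ultrafilter clause. This is why the paper cites the extender form, explicitly invoking \(P^{(\omega_1)^N}\cap N\subseteq P\). The repair is immediate --- you have the closure hypothesis in hand and need only route through the second clause instead of the first --- but as written the step is unjustified.

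Two cosmetic remarks: in \((1)\Rightarrow(2)\), downward directedness of (set-many) grounds is a ZFC theorem of Usuba, so it needs no large cardinals; and in \((3)\Rightarrow(4)\), the claim that the target of \(j_W\) is closed under \((\omega_1)^N\)-sequences should be justified by the fact that the completeness of a nonprincipal countably complete ultrafilter is a measurable cardinal, hence exceeds \((\omega_1)^N\).
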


Suppose \(\kappa\) is strongly compact. We do not know whether \(\kappa\) must be
strongly compact in \(\kappa\text{-HOD}\).
We can, however, prove the following ``cheap HOD Conjecture'':
\begin{thm}\label{CheapHOD} Suppose \(\kappa\) is supercompact. Then \(\kappa\)
is supercompact in \(\kappa\textnormal{-HOD}\), and in fact
\(\kappa\textnormal{-HOD}\) is a weak extender model for the
supercompactness of \(\kappa\).
\begin{proof}
Let \(N\) denote \(\kappa\)-HOD. By \cref{Ground}, \(N\) is a ground of \(V\),
so there is some cardinal \(\lambda\) such that for all regular cardinals
\(\delta\geq \lambda\), any set \(S\) that is stationary in \(N\) is stationary
in \(V\).

Fix \(\delta\geq \lambda\). We will show that for any normal fine
\(\kappa\)-complete ultrafilter \(\mathcal U\) over \(P_\kappa(\delta)\),
\(\mathcal U\cap N\in N\) and \(P_\kappa(\delta)\cap N\in \mathcal U\). 
This establishes that \(N\) is a weak extender model for the
supercompactness of \(\kappa\).

Of course, \(\mathcal U\cap N\in N\) by the definition of \(N\) for any
\(\kappa\)-complete normal fine ultrafilter \(\mathcal U\) over \(P_\kappa(\delta)\);
this is because \(\mathcal U\) is the unique
normal fine ultrafilter on \(P_\kappa(\delta)\)
that is Rudin-Keisler equivalent to \(\mathcal U\),
and hence \(\mathcal U\) is ordinal definable from \(j_\mathcal U\).

We now show that \(P_\kappa(\delta)\cap N\in \mathcal U\). Let \(j : V \to M\)
be the ultrapower embedding associated to \(\mathcal U\). By \L o\'s's Theorem, and
since \([\text{id}]_\mathcal U = j[\delta]\), we just need to show that
\(j[\delta]\in j(N)\). Let \(T\) be the set of ordinals less than \(\delta\)
that have cofinality \(\omega\) in \(N\). Let \(\langle S_\alpha\rangle_{\alpha
< \delta}\in N\) be a partition of \(T\) into stationary sets. Let \(\langle
S_\alpha^*\rangle_{\alpha < j(\delta)} = j(\langle S_\alpha\rangle_{\alpha <
\delta})\). Thus \(\langle S_\alpha^*\rangle_{\alpha < j(\delta)}\in j(N)\).
Then \(j[\delta] = \{\alpha < j(\delta) : S_\alpha^*\cap \sup j[\delta]\text{ is
stationary}\}\) by the proof of Solovay's Theorem; see \cite[Corollary
4.4.31]{UA}. Thus \(j[\delta]\) is ordinal definable in \(M\) from \(\langle
S_\alpha^*\rangle_{\alpha < j(\delta)}\), so \(j[\delta]\in j(N)\).
\end{proof}
\end{thm}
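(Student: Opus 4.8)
The plan is to show that $N := \kappa\textnormal{-HOD}$ is a weak extender model for the supercompactness of $\kappa$; the displayed ``in fact'' clause is then the main statement, since the witnessing ultrafilters restrict to $\kappa$-complete normal fine ultrafilters \emph{of} $N$ and thus witness that $\kappa$ is supercompact there. By \cref{Ground}, $N$ is a ground of $V$, so I would first fix a cardinal $\lambda_0$ above which $N$ correctly computes stationarity: for every regular $\delta \geq \lambda_0$, each subset of $\delta$ stationary in $N$ is stationary in $V$. For each regular $\delta \geq \lambda_0$ I will produce a $\kappa$-complete normal fine ultrafilter $\mathcal U$ over $P_\kappa(\delta)$ with $\mathcal U \cap N \in N$ and $P_\kappa(\delta)\cap N \in \mathcal U$; the weak extender model property for \emph{all} $\lambda \geq \kappa$ then follows by the standard projection $\sigma \mapsto \sigma \cap \lambda$ from $P_\kappa(\delta)$ onto $P_\kappa(\lambda)$ for $\kappa \leq \lambda < \delta$, which carries witnesses to witnesses.

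The condition $\mathcal U \cap N \in N$ is the easy half and requires no more than the definition of $N$: any $\kappa$-complete normal fine ultrafilter $\mathcal U$ over $P_\kappa(\delta)$ is the unique normal fine ultrafilter Rudin--Keisler equivalent to itself, hence is ordinal definable from $j_\mathcal U$, which in turn is definable from a $\kappa$-complete ultrafilter over an ordinal. Thus $\mathcal U$ is $\kappa$-OD, and $\mathcal U \cap N$, being a $\kappa$-OD family of elements of $N$, belongs to $N$.

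The real content is $P_\kappa(\delta)\cap N \in \mathcal U$. Writing $j = j_\mathcal U : V\to M$ and using normality to identify $[\textnormal{id}]_\mathcal U = j[\delta]$, \L o\'s's theorem reduces this to showing $j[\delta] \in j(N)$. Here I would invoke Solovay's stationary-reflection analysis of $j[\delta]$. Working inside $N$, which satisfies the Axiom of Choice by \cref{kappaHODZFC}, fix a partition $\langle S_\alpha\rangle_{\alpha < \delta} \in N$ of the set of ordinals below $\delta$ of $N$-cofinality $\omega$ into sets stationary in $N$, and set $\langle S^*_\alpha\rangle_{\alpha < j(\delta)} = j(\langle S_\alpha\rangle_{\alpha<\delta}) \in j(N)$. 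Solovay's computation (see \cite[Corollary 4.4.31]{UA}) then gives
\[ j[\delta] = \{\alpha < j(\delta) : S^*_\alpha \cap \textstyle\sup j[\delta] \text{ is stationary}\}, \]
with stationarity computed in $M$. Since every ordinal of $N$-cofinality $\omega$ has $V$-cofinality $\omega$, the embedding $j$ is continuous at such ordinals, which is exactly the input Solovay's argument needs; and each $S^*_\alpha$ is stationary in $j(N)$ by elementarity.

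To place $j[\delta]$ inside $j(N)$ rather than merely inside $M$, I would transport the ground relationship through $j$: by elementarity ``$N$ is a ground of $V$'' yields ``$j(N)$ is a ground of $M$,'' so for $\sup j[\delta]$ large enough, stationarity of subsets of $\sup j[\delta]$ is absolute between $j(N)$ and $M$. The right-hand side above can then be evaluated \emph{inside} $j(N)$ from the parameter $\langle S^*_\alpha\rangle \in j(N)$ and the ordinal $\sup j[\delta]$, giving $j[\delta]\in j(N)$, as required. The main obstacle is precisely this absoluteness of stationarity: I must simultaneously ensure that Solovay's formula is correct with $M$-stationarity and that $M$-stationarity agrees with $j(N)$-stationarity at $\sup j[\delta]$, the latter being where the ground property (pushed forward by $j$) does the essential work. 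Accordingly the threshold $\lambda_0$ and the requirement $\delta \geq \lambda_0$ must be chosen so that the stationary-preservation facts still apply to $j(N)$ inside $M$ after applying $j$.
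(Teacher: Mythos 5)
Your proposal tracks the paper's proof almost exactly until the very last step: the same reduction of \(P_\kappa(\delta)\cap N\in\mathcal U\) to \(j[\delta]\in j(N)\) via normality and \L o\'s, the same Rudin--Keisler argument for \(\mathcal U\cap N\in N\), and the same Solovay partition formula. The gap is in how you pass from the formula to \(j[\delta]\in j(N)\). You propose to evaluate the right-hand side \emph{inside} \(j(N)\), justified by the claim that, since \(j(N)\) is a ground of \(M\), stationarity of subsets of \(\lambda^* = \sup j[\delta]\) is absolute between \(j(N)\) and \(M\). That absoluteness does not follow from the ground relationship. Ground-to-extension preservation of stationary sets holds at regular cardinals (or at ordinals whose cofinality exceeds the chain condition of the forcing), and \(\lambda^*\) is badly singular in \(M\): since \(j[\delta]=[\textnormal{id}]_\mathcal U\in M\), we have \(\textnormal{cf}^M(\lambda^*)=\delta\). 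Meanwhile the forcing witnessing \cref{Ground} is a Vopenka-type forcing of size well above \(\kappa\), so by elementarity \(M=j(N)[G']\) for a forcing of \(M\)-cardinality at least \(j(\kappa)>\delta\); its chain condition is therefore far above \(\textnormal{cf}^M(\lambda^*)\), and no choice of the threshold \(\lambda_0\) or of \(\delta\) repairs this, because \(\textnormal{cf}^M(\lambda^*)=\delta<j(\kappa)\) always. Worse, the direction you need (\(j(N)\)-stationary implies \(M\)-stationary, applied to \(S^*_\alpha\cap\lambda^*\) for \(\alpha\notin j[\delta]\)) is essentially equivalent to the conclusion: the set \(\{\alpha<j(\delta) : (S^*_\alpha\cap\lambda^*\text{ is stationary})^{j(N)}\}\) belongs to \(j(N)\) by separation, so asserting that it equals \(j[\delta]\) is asserting \(j[\delta]\in j(N)\). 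If \(j[\delta]\notin j(N)\), the cofinality of \(\lambda^*\) computed in \(j(N)\) may exceed \(\delta\), and the \(M\)-club witnessing nonstationarity of \(S^*_\alpha\cap\lambda^*\) (essentially the closure of \(j[\delta]\)) is unavailable in \(j(N)\); nothing you cite rules out that some \(S^*_\alpha\) with \(\alpha\notin j[\delta]\) stays stationary in \(j(N)\).

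The paper closes this step with a different and simpler move, which is exactly where the HOD-nature of \(N\) enters: keep the stationarity in Solovay's formula computed in \(M\). Then \(j[\delta]\) is ordinal definable over \(M\) from the parameter \(\langle S^*_\alpha\rangle_{\alpha<j(\delta)}\in j(N)\) and the ordinal \(\lambda^*\). Since \(j(N)=(j(\kappa)\textnormal{-HOD})^M\), that parameter is itself definable in \(M\) from a \(j(\kappa)\)-complete ultrafilter over an ordinal together with ordinals, so, composing definitions, \(j[\delta]\) is \(j(\kappa)\)-OD in \(M\); being a set of ordinals, it is hereditarily \(j(\kappa)\)-OD, hence \(j[\delta]\in j(N)\). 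Note that your route, had it worked, would have proved \(j[\delta]\in j(N)\) for an \emph{arbitrary} ground \(N\), making no use of what \(\kappa\textnormal{-HOD}\) is; the definability closure of \(j(N)\) under ordinal definability from its own elements is the missing idea, and it is not something stationarity absoluteness can substitute for.
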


\section{Questions}
\begin{qst}[Boney-Unger and Brooke-Taylor]
If there is a proper class of almost strongly compact cardinals, is there 
a proper class of strongly compact cardinals?
\end{qst}

\begin{qst}[Caicedo]
Can there be cardinal preserving or cofinality preserving embeddings from
the universe of sets into an inner model?
\end{qst}

\begin{qst}
    Suppose \(\kappa\) is strongly compact. Is \(\kappa\) strongly compact in
    \(\kappa\text{-HOD}\)?
\end{qst}
\bibliography{Bibliography.bib}
\bibliographystyle{unsrt}
\end{document}